\numberwithin{equation}{section}
\newtheorem{thm}{Theorem}[section]
\newtheorem{lem}{Lemma}[section]
\newtheorem{cor}{Corollary}[section]
\newtheorem{remark}{Remark}[section]
\newtheorem{theoremalph}{Theorem}
\begin{document}
\title[The $L_p$ dual Minkowski problem]
{uniqueness and continuity of the solution to $L_p$ dual Minkowski problem* }

\author{Hejun Wang}

\address{\parbox[l]{1\textwidth}{School of Mathematics and Statistics, Shandong Normal University, Ji'nan, Shandong 250014, China\\
School of Mathematics and
Statistics, Southwest University, Chongqing 400715, China
}}
\email{wanghjmath@sdnu.edu.cn}

\author{Jiazu Zhou**}
\address{\parbox[l]{1\textwidth}{School of Mathematics and
Statistics, Southwest University, Chongqing 400715, China.\\
 College of Science, Wuhan University of Science and Technology, Wuhan, Hubei 430081, China}} \email{zhoujz@swu.edu.cn }

\subjclass[2000]{52A40} \keywords{$L_p$ dual Minkowski problem; $L_p$ dual curvature measure; Minkowski-type inequlity; uniqueness; continuity. }

\thanks{*Supported in part by NSFC (No.12071378), China Postdoctoral Science Foundation (No.2020M682222)
and Natural Science Foundation of Shandong (No.ZR2020QA003) }
\thanks{**The corresponding author}

\maketitle

\begin{abstract}
Lutwak, Yang and Zhang \cite{LYZ2018} introduced the $L_p$ dual curvature measure that
unifies several other geometric measures in dual Brunn-Minkowski theory and Brunn-
Minkowski theory. Motivated by works in \cite{LYZ2018}, we consider the uniqueness and continuity
of the solution to the $L_p$ dual Minkowski problem. To extend the important work (Theorem
\ref{uniquepolytope}) of LYZ to the case for general convex bodies, we establish some new
Minkowski-type inequalities which are closely related to the optimization problem associated with
the $L_p$ dual Minkowski problem. When $q< p$, the uniqueness of the solution to the $L_p$ dual
Minkowski problem for general convex bodies is obtained. Moreover, we obtain the
continuity of the solution to the $L_p$ dual Minkowski problem for convex bodies.
\end{abstract}

\vskip 0.5cm
\section{Introduction}
\vskip 0.3cm

A compact convex subset in the Euclidean space $\mathbb{R}^n$ with non-empty interior is called a convex body.
Let $\mathcal{K}_{o}^n$ denote
the set of convex bodies in $\mathbb{R}^n$ containing the origin in their interiors
and $\mathcal{K}_{e}^n$ denote the set of convex bodies in $\mathbb{R}^n$ that are symmetric about the origin.
Let $S^{n-1}$ denote the unit sphere in $\mathbb{R}^n$.

As one of most fundamental problems
in the Brunn-Minkowski theory of convex bodies,
the classical Minkowski problem is concerned with the characterization of the
so-called surface area measure:

For each Borel $\eta \subset S^{n-1}$, the surface area measure $S(K,\cdot)$ of convex body $K$
in $\mathbb{R}^n$ is defined by
\begin{align*}
S(K,\eta)=\mathcal{H}^{n-1}(\nu^{-1}(\eta)),
\end{align*}
where $\nu_K: \partial' K \rightarrow S^{n-1}$ is the Gauss map of $K$, defined on $\partial' K$ , the subset of the boundary $\partial K$ with unique outer unit normal,
and $\mathcal{H}^{n-1}$ is an ($n-1$)-dimensional Hausdorff measure.

It is clear to see that the surface area measure $S(K,\cdot)$ is a Borel measure on the unit sphere $S^{n-1}$.
The classical Minkowski problem characterizing the surface area measure is as follows:
\vskip 0.2cm
\noindent
{\bf The classical Minkowski problem:}
\emph{What are necessary and sufficient conditions
for a finite Borel measure $\mu$ on $S^{n-1}$ so that $\mu$ is the surface area measure
of a convex body in $\mathbb{R}^n$?}
\vskip 0.2cm

The classical Minkowski problem for discrete measure is a particular case, which is called discrete Minkowski problem.
A polytope $P$ in $\mathbb{R}^n$ is the convex hull of a finite set of points in $\mathbb{R}^n$ provided its interiors
$\text{int}~P\neq\emptyset$. The facets of $P$ are the faces whose dimension is $n-1$.
Let $a_1,\cdots,a_k$ be the $(n-1)$-dimensional volumes of all facets of $P$ whose exterior unit normal vectors are
$u_1,\cdots,u_k$. Then $S(P,\cdot)$ is a discrete measure on $S^{n-1}$ which is concentrated on the set $\{u_1,\cdots,u_k\}$
and $S(P,u_i)=a_i$ for $i=1,\cdots,k$.

The discrete Minkowski problem asks for what conditions
for a given discrete finite Borel measure $\mu$ on $S^{n-1}$ to be the surface are measure of a polytope.

Furthermore, if such a polytope exists, is it unique?
Minkowski \cite{Minkowski1897,Minkowski1903} solved the existence and uniqueness of the solution to the discrete Minkowski problem. The classical Minkowski problem was solved by Aleksandrov \cite{Aleksandrov1938} and independently by
Fenchel and Jessen \cite{FenchelJ1938}.

The $L_p$ surface area measure, introduced by Lutwak \cite{Lutwak3},
is an important extension of the classical surface area measure $S(K,\cdot)$ .
The Minkowski problem for the $L_p$ surface area measure is called the $L_p$ Minkowski problem
of which the classical Minkowski problem,
the centro-affine Minkowski problem \cite{ChouWang} and the logarithmic
Minkowski problem \cite{BoroczkyLYZ2}
are special cases.
In recent decades, the $L_p$ surface area measure and its related problems are studied in
\cite{BartheMN,BoroczkyHenk,BoroczkyLYZ1,
HaberlSchuster1,LudwigXiaoZhang,
Lutwak3,LutwakOliker}.
The existence and uniqueness of the solution to
the $L_p$ Minkowski problem were studied in  \cite{Chen,Lutwak3,LYZ3,ChouWang,ChenLiZhu,HaberlLYZ,HugLYZ,BoroczkyLYZ1,BoroczkyLYZ2,
Stancu1,Stancu2,HuangLiuXu,LuWang,BoroczkyHZ,JianLuZhu,Zhu1,Zhu2,Zhu3,Zhu5}.
Zhu \cite{Zhu4} obtained the continuity of the solution to the $L_p$ Minkowski problem for $1<p\neq n$.
When $p=0$ and $0<p<1$, the part results of continuity were obtained in \cite{WangLv,WFZ2019-2}.
The solutions of the $L_p$ Minkowski problem have important
applications to affine isoperimetric inequalities  \cite{Zhang1,LYZ2,CianchiLYZ,HaberlSchuster1,HaberlSchuster2,HaberlSchusterXiao,Wang1}.

As a ``dual" counterpart of the classical Brunn-Minkowski theory for convex bodies, the dual Brunn-Minkowski theory for star bodies also gets rapid development.
A star body $Q\subseteq\mathbb{R}^n$ is a compact star shaped set with respect to the origin whose radial function
$\rho_Q: S^{n-1}\rightarrow [0,+\infty)$, given by $\rho_Q(u)=\max\{\lambda\geq0: \lambda u\in Q\}$ for $u\in S^{n-1}$,
is continuous.
Let $\mathcal{S}_{o}^n$ denote
the set of star bodies (with respect to the origin) in $\mathbb{R}^n$ containing the origin in their interiors
and $\mathcal{S}_{e}^n$ denote the set of those elements of $\mathcal{S}_{o}^n$ that are symmetric about the origin.

As one of the core problems of the dual Brunn-Minkowski theory, the dual Minkowski problem
for dual curvature measure has been the focus of attention.
In \cite{HuangLYZ},
Huang, Lutwak, Yang and Zhang introduced the $q$-th dual curvature measure of a convex body in $\mathcal{K}_{o}^n$.
Recently, the more general version of the dual curvature measure was posed by Lutwak, Yang and Zhang \cite{LYZ2018}:

For any Borel set $\eta\subseteq S^{n-1}$, $q\in \mathbb{R}$ , the $q$-th dual curvature measure $\widetilde{C}_{q}(K,Q,\cdot)$ of $K\in \mathcal{K}_{o}^n$ with respect to $Q\in \mathcal{S}_{o}^n$
is defined  by
\begin{align}
\widetilde{C}_{q}(K,Q,\eta)=\frac{1}{n}\int_{\bm{\alpha}^*_K(\eta)}\rho^q_{K}(u)\rho^{n-q}_{Q}(u)du,
\end{align}
where $\bm{\alpha}^*_K(\eta)$ denotes the set of unit vector $u$
such that an outer unit normal vector of $K$ at the boundary point $\rho_{K}(u)u$
belongs to Borel set $\eta$ on the unit sphere $S^{n-1}$.

Furthermore, Lutwak, Yang and Zhang \cite{LYZ2018} also introduced the $L_p$ dual curvature measures
of which the $L_p$ surface area measures, the $L_p$ integral curvature measures and the dual curvature measures
are special cases.

For $p,q\in \mathbb{R}$, the $L_p$ dual curvature measure $\widetilde{C}_{p,q}(K,Q,\cdot)$ of $K\in \mathcal{K}_{o}^n$ with respect to $Q\in \mathcal{S}_{o}^n$
is given  by
\begin{align}
d\widetilde{C}_{p,q}(K,Q,\cdot)=h_K^{-p}d\widetilde{C}_{q}(K,Q,\cdot),
\end{align}
where $h_K(u)=\max\{u\cdot x: x\in K\}$ for $u\in S^{n-1}$ is
the support function of a convex body $K\in \mathcal{K}_{o}^n$.
The $L_p$ dual Minkowski problem for the $L_p$ dual curvature measure is stated as follows:
\vskip 0.2cm
\noindent
{\bf The $L_p$ dual Minkowski problem} \cite{LYZ2018}:
\emph{Suppose that $p,q\in \mathbb{R}$, and $Q\in \mathcal{S}_{o}^n$ are fixed.
Given a finite Borel measure $\mu$ on the unit sphere $S^{n-1}$, find necessary and sufficient condition(s) on $\mu$ so that
it becomes the $L_p$ dual curvature measure $\widetilde{C}_{p,q}(K,Q,\cdot)$ of a convex body $K\in \mathcal{K}_{o}^n$.}
\vskip 0.2cm

When the given measure $\mu$ has a density $f$, the $L_p$ dual Minkowski problem becomes the following
Monge-Amp\`{e}re type equation on $S^{n-1}$:
\begin{align}\label{LpdualMPequation}
\text{det}(\nabla^2 h(u)+h(u)I)=h^{p-1}(u)\|\nabla h(u)+h(u)u\|_Q^{n-q}f(u),
\end{align}
where $f$ is the given ``data" function on $S^{n-1}$, $h$ is the function to be found, and $\|\cdot\|_Q$ is the Minkowski functional of $Q$ defined by $\|x\|_Q=\inf\{\lambda>0, x\in \lambda Q\}$. Here, $\nabla h(u)$ and $\nabla^2 h(u)$ denote the gradient vector and the Hessian matrix of $h$, respectively, with respect to a moving orthonormal frame on $S^{n-1}$, and $I$ is the identity matrix.

The case for $q=n$ is the $L_p$ Minkowski problem.
The case for $q=0$ and $Q=B $ is the $L_p$ Aleksandrov problem \cite{Aleksandrov,HuangLYZ2018}.
The case for $p=0$ and $Q=B$ is the dual Minkowski problem \cite{HuangLYZ}.
In \cite{HuangLYZ}, the existence of solutions for the dual Minkowski problem for even data with in the class of origin-symmetric convex bodies was established. The existence for the critical cases of the even dual Minkowski problem were established in \cite{Zhao2} and \cite{BoroczkyHP}, and a complete solution to the dual Minkowski problem with negative indices was given in \cite{Zhao1}. The continuity of the solution to the dual Minkowski problem for negative indices was obtained in \cite{WFZ}.

Recently, the $L_p$ dual Minkowski problem receives much attention.
Huang and Zhao \cite{HuangZ2018} gave a complete characterization to existence parts of the $L_p$ dual Minkowski problem for
$q<0<p$ and the even case for $p,q>0$ with $p\neq q$.
Moreover, the
existence and uniqueness of smooth solution of \eqref{LpdualMPequation} were obtained by using the
method of continuity for prescribed smooth function $f$ and $p>q$ in \cite{HuangZ2018}.
B\"{o}r\"{o}czky and  Fodor \cite{BoroczkyF} obtained the existence part of the $L_p$ dual Minkowski problem for $p>1$ and $q>0$ with $p\neq q$. Huang, Lutwak, Yang and Zhang \cite{HuangLYZ2018} posed the $L_p$ Aleksandrov problem and established the existence of this problem in several situations for $p$. Zhao \cite{Zhao3} gave a necessary and sufficient condition for the existence part of the
even discrete $L_p$ Aleksandrov problem for $-1<p<0$.
Chen, Huang and Zhao \cite{ChenHZ}
obtained the existence of smooth solutions of the $L_p$ dual Minkowski problem \eqref{LpdualMPequation}
when $pq\geq 0$ and $f$ is even.
Li, Liu and Lu \cite{LiLL} obatined nonuniqueness of the solution to the $L_p$ dual Minkowski problem for $p<0<q$ by constructing an example.

In \cite{LYZ2018}, Lutwak, Yang and Zhang established the uniqueness of the solution to the $L_p$ dual Minkowski problem
for the case of polytopes when $q<p$:

\begin{theoremalph}[\cite{LYZ2018}]\label{uniquepolytope}
Let $q<p$ and $Q\in\mathcal{S}_{o}^n$.
If $P,P'\in \mathcal{K}_{o}^n$ be polytopes and
\begin{align*}
\widetilde{C}_{p,q}(P,Q,\cdot)=\widetilde{C}_{p,q}(P',Q,\cdot),
\end{align*}
then $P=P'$.
\end{theoremalph}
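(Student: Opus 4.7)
My approach is a comparison argument carried out at the index where the ratio of support functions is extremal. Because $P$ and $P'$ are polytopes, each of the measures $\widetilde C_{p,q}(P,Q,\cdot)$ and $\widetilde C_{p,q}(P',Q,\cdot)$ is discrete and supported precisely on the outer unit normals of the facets of the corresponding polytope; equality of the two measures therefore forces $P$ and $P'$ to share a common set of facet normals $u_1,\dots,u_N$. Write $h_i = h_P(u_i)$ and $h_i' = h_{P'}(u_i)$. The goal is to prove separately that $\max_i h_i'/h_i \le 1$ and $\max_i h_i/h_i' \le 1$, which together give $h_i = h_i'$ for every $i$ and hence $P = P'$.

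To establish the first inequality, set $\lambda = \max_i h_i'/h_i$ and pick an index $i_0$ attaining this maximum; suppose for contradiction that $\lambda > 1$. Then $h_{P'}(u_i) \le \lambda h_P(u_i)$ for every $i$ gives the inclusion $P' \subseteq \lambda P$, and equality at $u_{i_0}$ places the facet of $P'$ with outer normal $u_{i_0}$ inside the corresponding facet of $\lambda P$. From this I would deduce two crucial geometric facts: the inclusion of radial-cone sets
\[
\bm{\alpha}^*_{P'}(\{u_{i_0}\}) \;\subseteq\; \bm{\alpha}^*_{\lambda P}(\{u_{i_0}\}) \;=\; \bm{\alpha}^*_{P}(\{u_{i_0}\}),
\]
together with the pointwise identity $\rho_{P'}(u) = \lambda\,\rho_P(u)$ on the left-hand set (since along those directions $P'$ and $\lambda P$ exit through the shared facet). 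Carefully transferring the facet containment into this radial data---using that the facet of a polytope with origin in its interior subtends a dilation-invariant solid-angle cone from the origin---is the step I expect to be the main technical obstacle.

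Granting these facts, the defining integral for $\widetilde C_q$ yields
\[
\widetilde C_q(P',Q,\{u_{i_0}\}) \;=\; \frac{1}{n}\int_{\bm{\alpha}^*_{P'}(\{u_{i_0}\})} \lambda^q\,\rho_P^q(u)\,\rho_Q^{n-q}(u)\,du \;\le\; \lambda^q\,\widetilde C_q(P,Q,\{u_{i_0}\}),
\]
whereas the hypothesis $\widetilde C_{p,q}(P',Q,\{u_{i_0}\}) = \widetilde C_{p,q}(P,Q,\{u_{i_0}\})$, expanded using $h_{i_0}' = \lambda h_{i_0}$, rearranges to $\widetilde C_q(P',Q,\{u_{i_0}\}) = \lambda^p\,\widetilde C_q(P,Q,\{u_{i_0}\})$. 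Combining these two and using that $\widetilde C_q(P,Q,\{u_{i_0}\})>0$ for any facet forces $\lambda^{p-q}\le 1$, which contradicts $p-q>0$ and $\lambda>1$. Hence $\lambda \le 1$, i.e.\ $P'\subseteq P$; swapping the roles of $P$ and $P'$ and running the identical argument yields $P\subseteq P'$, completing the proof.
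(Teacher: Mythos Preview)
Your argument is correct, and it is essentially the extremal-ratio comparison that appears in the cited source \cite{LYZ2018}. The step you flag as the main obstacle does go through: once $P'\subseteq\lambda P$ and $h_{P'}(u_{i_0})=h_{\lambda P}(u_{i_0})$, any boundary point $\rho_{P'}(u)u\in F(P',u_{i_0})$ lies in the supporting hyperplane $H_{\lambda P}(u_{i_0})$ and in $\lambda P$, hence on $\partial(\lambda P)$, which gives both the inclusion $\bm\alpha^*_{P'}(\{u_{i_0}\})\subseteq\bm\alpha^*_{\lambda P}(\{u_{i_0}\})=\bm\alpha^*_{P}(\{u_{i_0}\})$ and the identity $\rho_{P'}=\lambda\rho_P$ there. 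The inequality $\widetilde C_q(P',Q,\{u_{i_0}\})\le\lambda^q\widetilde C_q(P,Q,\{u_{i_0}\})$ then follows from set inclusion alone, since the integrand $\rho_P^q\rho_Q^{n-q}$ is positive regardless of the sign of $q$; combined with the hypothesis this forces $\lambda^{p-q}\le 1$.

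Note, however, that the present paper does \emph{not} supply its own proof of this statement: Theorem~A is quoted from \cite{LYZ2018} as motivation. What the paper proves is the strictly stronger Theorem~\ref{uniqueness} for arbitrary $K,L\in\mathcal K_o^n$, and it does so by an entirely different route---building Brunn--Minkowski-- and Minkowski--type inequalities \eqref{MIp}, \eqref{MIp=0}, \eqref{MIq=0} for the $L_p$ dual mixed volume $\widetilde V_{p,q}$ and then reading off uniqueness from their equality conditions. Your approach is more elementary and direct but is intrinsically tied to polytopes (it needs the measure to be purely atomic and supported on a common finite set of facet normals, and it needs the facet-containment geometry). The paper's inequality method sacrifices that simplicity in exchange for applying uniformly across all of $\mathcal K_o^n$, and as a by-product yields the new inequalities themselves.
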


In this paper, motivated by works of Lutwak, Yang and Zhang \cite{LYZ2018}, we consider the uniqueness and continuity of the solution to the $L_p$ dual Minkowski problem.
Firstly, we extend Theorem \ref{uniquepolytope} to the case for general convex bodies (not necessarily polytopes). To achieve this goal,
we establish some new Minkowski-type inequalities ( see \eqref{MIp}, \eqref{MIp=0} and \eqref{MIq=0})
which are closely related to the optimization problem associated with the $L_p$ dual Minkowski problem.
One of our main results is as follows:

\begin{thm}\label{uniqueness}
Let $q<p$ and $Q\in \mathcal{S}_{o}^n$. If $K,L\in\mathcal{K}_o^n$
and
\begin{align*}
\widetilde{C}_{p,q}(K,Q,\cdot)=\widetilde{C}_{p,q}(L,Q,\cdot),
\end{align*}
then $K=L$.
\end{thm}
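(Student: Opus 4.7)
The plan is to mimic the classical variational pattern for uniqueness in Minkowski-type problems: from the single hypothesis $\widetilde{C}_{p,q}(K,Q,\cdot)=\widetilde{C}_{p,q}(L,Q,\cdot)$, produce two integral identities by testing against $h_L^p$ and $h_K^p$, then sandwich them between the two directions of the Minkowski-type inequalities \eqref{MIp}, \eqref{MIp=0}, \eqref{MIq=0} advertised in the introduction, and finally invoke the equality characterization of those inequalities to force $K=L$. Concretely, introduce the $L_p$ dual mixed volume
\begin{align*}
\widetilde{V}_{p,q}(K,L,Q):=\int_{S^{n-1}} h_L^{p}(u)\,d\widetilde{C}_{p,q}(K,Q,u)=\int_{S^{n-1}}\left(\frac{h_L(u)}{h_K(u)}\right)^{p}d\widetilde{C}_{q}(K,Q,u)
\end{align*}
for $p\neq 0$ (with the corresponding logarithmic analogue $\int \log(h_L/h_K)\,d\widetilde{C}_q(K,Q,\cdot)$ for $p=0$). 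Because $\widetilde{V}_{p,q}(K,K,Q)$ equals $n\widetilde{V}_q(K,Q)$ up to normalization, the hypothesis immediately yields
\begin{align*}
\widetilde{V}_{p,q}(K,L,Q)=n\widetilde{V}_q(L,Q),\qquad \widetilde{V}_{p,q}(L,K,Q)=n\widetilde{V}_q(K,Q),
\end{align*}
which are the two identities that will be inserted into the two Minkowski-type inequalities.

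The first real step is to establish those Minkowski-type inequalities. I expect them to take the schematic form
\begin{align*}
\widetilde{V}_{p,q}(K,L,Q)^{\alpha}\geq \widetilde{V}_q(K,Q)^{\beta}\,\widetilde{V}_q(L,Q)^{\gamma},
\end{align*}
with exponents $(\alpha,\beta,\gamma)$ determined by $p$ and $q$, with equality if and only if $K$ and $L$ agree (or differ by a dilation, which the sign of $p-q$ then excludes). The natural route is to combine the integral representation \eqref{LpdualMPequation} (equivalently, the definition of $\widetilde{C}_q$ on $S^{n-1}$ via the radial function) with H\"older's or Jensen's inequality against the weight $h_K^{-p}\rho_K^{q}\rho_Q^{n-q}$; the condition $q<p$ is precisely what makes the resulting exponents lie in the range where H\"older's inequality reverses direction when one swaps $(K,L)\mapsto(L,K)$, producing the opposite inequality. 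The limiting cases $p=0$ and $q=0$ should follow either by a direct $p\to 0$ (or $q\to 0$) limit from the previous case or by arguing directly with the logarithmic integrand.

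Given those inequalities, the conclusion is short: substitute the two identities above into \eqref{MIp} applied to the pair $(K,L)$ and then to the pair $(L,K)$; the two inequalities combine to an equation, which by the equality characterizations of \eqref{MIp}--\eqref{MIq=0} forces $K$ and $L$ to coincide on $S^{n-1}$ through their support functions, hence $K=L$.

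The main obstacle will be proving the Minkowski-type inequalities with a sharp equality case for \emph{general} $K,L\in\mathcal{K}_o^n$, rather than only for polytopes as in Theorem \ref{uniquepolytope}. The dual curvature measure $\widetilde{C}_q(K,Q,\cdot)$ need not be absolutely continuous and may be supported on a proper subset of $S^{n-1}$, so the equality case of the underlying H\"older/Jensen step only controls $h_L/h_K$ on $\mathrm{supp}\,\widetilde{C}_q(K,Q,\cdot)$; promoting this to an identity of support functions on all of $S^{n-1}$ requires exploiting the asymmetry $q<p$ together with an approximation argument (for instance by smoothing $K$ and $L$, or by invoking the polytope case of Theorem \ref{uniquepolytope} via a weak-continuity argument on $\widetilde{C}_{p,q}$). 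A secondary nuisance is that the sign of $p$ and the presence of the weight $h_K^{-p}$ flip the direction of certain inequalities near $\partial K$, so the arguments for $p>0$, $p<0$, and $p=0$ must be run separately, which is presumably why the three inequalities \eqref{MIp}, \eqref{MIp=0} and \eqref{MIq=0} are stated as a family rather than as a single statement.
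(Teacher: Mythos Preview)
Your high-level architecture---derive the two cross identities $\widetilde{V}_{p,q}(K,L,Q)=\widetilde{V}_q(L,Q)$ and $\widetilde{V}_{p,q}(L,K,Q)=\widetilde{V}_q(K,Q)$ from the hypothesis, feed them into the Minkowski-type inequalities \eqref{MIp}, \eqref{MIp=0}, \eqref{MIq=0} in both directions, and use the equality characterization---is exactly what the paper does in Theorems~\ref{uniquenesspq0}, \ref{uniquenessp=0}, \ref{uniquenessq=0}. (A minor normalization slip: with the paper's conventions the factor $n$ is already absorbed, so $\widetilde{V}_{p,q}(K,K,Q)=\widetilde{V}_q(K,Q)$, not $n\widetilde{V}_q(K,Q)$.)

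The gap is in your plan for \emph{proving} the Minkowski-type inequalities. You propose to run H\"older/Jensen directly against the measure $d\widetilde{C}_q(K,Q,\cdot)=h_K^{-p}\rho_K^q\rho_Q^{n-q}$-weighted form. But the left side of \eqref{MIp} involves $h_L$ (a support function) while the right side involves $\widetilde{V}_q(L,Q)=\frac1n\int\rho_L^q\rho_Q^{n-q}$ (a radial function). No purely analytic H\"older step on $\widetilde{C}_q(K,Q,\cdot)$ will convert $h_L$ into $\rho_L$; you need a geometric bridge. You correctly sense trouble (your paragraph about $\mathrm{supp}\,\widetilde{C}_q$), but the real obstruction is more basic than the support issue: the inequality is not between two integrals against the same measure.

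The paper supplies the missing geometric ingredient in Lemma~\ref{inclusion}: for all $p\in\mathbb{R}$ and $t\in[0,1]$,
\[
(1-t)\cdot K\,\tilde{+}_p\,t\cdot L\ \subseteq\ (1-t)\cdot K\,+_p\,t\cdot L,
\]
i.e.\ the radial $p$-combination sits inside the $L_p$ Minkowski (Wulff) combination. This inclusion, combined with Minkowski's integral inequality (or H\"older, or concavity of $\log$) applied to $\rho^q\rho_Q^{n-q}$ over \emph{spherical Lebesgue measure}, yields Brunn--Minkowski-type inequalities for $\widetilde{V}_q(\cdot,Q)$ (resp.\ $\widetilde{E}(\cdot,Q)$) along the $L_p$ combination. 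The Minkowski-type inequalities \eqref{MIp}, \eqref{MIp=0}, \eqref{MIq=0} are then extracted by the standard variational trick: the function $t\mapsto\widetilde{V}_q((1-t)\cdot K+_pt\cdot L,Q)^{p/q}$ minus its affine interpolant is concave (or convex), vanishes at $t=0,1$, so its one-sided derivative at $0$, computed via the variational formula of Lemma~\ref{variationf}, has the required sign. Because the underlying H\"older/Minkowski step is over Lebesgue measure on $S^{n-1}$ and compares $\rho_K$ with $\rho_L$ directly, the equality case gives ``$K,L$ dilates'' on the nose, with no support-of-measure difficulty and no approximation by polytopes needed.
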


When $p=0$ and $Q=B$, Theorem \ref{uniqueness} is the uniqueness of the solution to the dual Minkowski problem for negative indices established by Zhao \cite{Zhao1}, but our method is completely different from Zhao's method in \cite{Zhao1}.
Thus, we give a new proof of the uniqueness of the solutin to the dual Minkowski problem for negative indices.

When $q=0$ and $Q=B$, the uniqueness of the solution to the $L_p$ Aleksandrov problem is obtained for $p>0$ by Theorem \ref{uniqueness}.
Besides, Huang, Lutwak, Yang and Zhang \cite{HuangLYZ2018} gave a complete solution to the existence of this problem.
Hence, the uniqueness and existence of the $L_p$ Aleksandrov problem for $p>0$ are solved completely.

Then, we consider the continuity of the solution to the $L_p$ dual Minkowski problem:
\vskip 0.2cm
\emph{Let  $q<p$, $Q\in \mathcal{S}_{o}^n$ and $K_i\in \mathcal{K}^n_o$ for each $i=0, 1, 2, \cdots$.
 Does $\{K_i\}$ converge to $K$ in Hausdorff metric as $\{\widetilde{C}_{p,q}(K_i,\cdot)\}$ converges weakly to $\widetilde{C}_{p,q}(K_0,\cdot)$?}
\vskip 0.2cm

By Theorem \ref{uniqueness} and these new Minkowski-type inequlities (\eqref{MIp}, \eqref{MIp=0} and \eqref{MIq=0}), we obtain the following theorems for continuity.

\begin{thm}\label{thm1-1}
Let $q<0\leq p$, $Q\in\mathcal{S}_{o}^n$ and $K_i\in\mathcal{K}_{o}^n$ for each $i=0,1,2,\cdots$.
If $\{\widetilde{C}_{p,q}(K_i,Q,\cdot)\}$ converges to $\widetilde{C}_{p,q}(K_0,Q,\cdot)$ weakly,
then $\{K_i\}$ converges to $K_0$ in the Hausdorff metric.
\end{thm}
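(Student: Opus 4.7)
The plan is to follow the standard Blaschke-selection route combined with the uniqueness Theorem \ref{uniqueness}, with the new Minkowski-type inequalities supplying the required \emph{a priori} bounds. I would first establish non-degeneracy of the sequence. Weak convergence applied to the constant function $1$ gives uniform boundedness of the total masses $\widetilde{C}_{p,q}(K_i,Q,S^{n-1})$. Writing
\[
\widetilde{C}_{p,q}(K_i,Q,S^{n-1})=\frac{1}{n}\int_{S^{n-1}}h_{K_i}^{-p}\bigl(\alpha_{K_i}(u)\bigr)\,\rho_{K_i}^{q}(u)\,\rho_Q^{n-q}(u)\,du,
\]
where $\alpha_{K_i}(u)$ is an outer unit normal of $K_i$ at $\rho_{K_i}(u)u$, and using $h_{K_i}(\alpha_{K_i}(u))\le\rho_{K_i}(u)$ together with $p\ge 0$, one obtains
\[
\widetilde{C}_{p,q}(K_i,Q,S^{n-1})\ge\frac{1}{n}\int_{S^{n-1}}\rho_{K_i}^{q-p}(u)\,\rho_Q^{n-q}(u)\,du.
\]
Since $q-p<0$, the right side blows up whenever $\min_u\rho_{K_i}(u)\to 0$, so there is a common $c_0>0$ with $c_0 B\subset K_i$ for all $i$.

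Next I would prove a uniform upper bound on $\mathrm{diam}(K_i)$. This is the main obstacle: for large $K_i$ both $\rho_{K_i}^{q}$ (with $q<0$) and $h_{K_i}^{-p}$ (with $p\ge 0$) become small, so a diameter bound cannot be read off the total mass. My plan here is to test the weak convergence against a continuous function such as $h_{K_0}^{p}$ when $p>0$, or $h_{K_0}$ when $p=0$, which yields boundedness of an integral of the form $\int h_{K_0}^{p}\,d\widetilde{C}_{p,q}(K_i,Q,\cdot)$; the new Minkowski-type inequalities \eqref{MIp} and \eqref{MIp=0} would then convert this integral bound into a bound on $\max_u h_{K_i}(u)$.

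With the uniform two-sided bounds in hand, the Blaschke selection theorem lets me extract from any subsequence of $\{K_i\}$ a further subsequence $K_{i_k}\to K'$ in the Hausdorff metric for some $K'\in\mathcal{K}_o^n$. Hausdorff convergence of convex bodies containing the origin in their interiors implies uniform convergence of support and radial functions and a.e.\ convergence of the radial Gauss maps; a dominated convergence argument, with integrable envelopes furnished by the uniform bounds of the previous steps, then gives the weak convergence $\widetilde{C}_{p,q}(K_{i_k},Q,\cdot)\to\widetilde{C}_{p,q}(K',Q,\cdot)$. By uniqueness of weak limits this coincides with $\widetilde{C}_{p,q}(K_0,Q,\cdot)$, and Theorem \ref{uniqueness} then forces $K'=K_0$. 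Since every subsequence of $\{K_i\}$ admits such a further subsequence with the same limit $K_0$, the whole sequence converges to $K_0$ in the Hausdorff metric.

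The hardest step will be the diameter bound: because $\widetilde{C}_{p,q}(K,Q,\cdot)$ assigns smaller total mass to larger bodies, coercivity cannot come from total mass alone. The Minkowski-type inequalities promised in the introduction are designed precisely to fill this gap, turning integral bounds against $\widetilde{C}_{p,q}(K_i,Q,\cdot)$ into geometric bounds on the $K_i$, and thereby reducing the continuity statement to Theorem \ref{uniqueness}.
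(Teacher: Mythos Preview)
Your overall architecture (Blaschke selection plus Theorem~\ref{uniqueness}) matches the paper's, and your lower bound via the total mass and $h_{K_i}(\alpha_{K_i}(u))\le\rho_{K_i}(u)$ is actually cleaner than the route the paper takes for this parameter range; it is essentially the argument the paper deploys later as Lemma~\ref{Kinteriorp}, and it works here because $q-p<0$.

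The gap is in the diameter bound. Testing weak convergence against $h_{K_0}^{p}$ gives control of $\widetilde{V}_{p,q}(K_i,K_0,Q)$, and inequality \eqref{MIp} then yields only an \emph{upper} bound on $\widetilde{V}_q(K_i,Q)$. But for $q<0$, bounds on $\widetilde{V}_q(K_i,Q)=\frac{1}{n}\int\rho_{K_i}^{q}\rho_Q^{n-q}\,du$ do not control $\max h_{K_i}$: a long thin body containing $c_0B$ has $\rho_{K_i}^{q}$ uniformly bounded above by $c_0^{q}$, while $\rho_{K_i}$ is large only on a small cap, so $\widetilde{V}_q(K_i,Q)$ stays between two positive constants as the diameter goes to infinity. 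Thus \eqref{MIp} (and similarly \eqref{MIp=0}) cannot by itself convert the integral bound into a diameter bound.

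The paper closes this gap by an additional idea you are missing: after normalising $\bar K_i=\widetilde{V}_q(K_i,Q)^{-1/q}K_i$ so that $\widetilde{V}_q(\bar K_i,Q)=1$, Lemma~\ref{polarbound} gives a uniform bound $\bar K_i^{*}\subset MB$ on the \emph{polar} bodies. If $\{\bar K_i\}$ were unbounded, then (along a subsequence) $\bar K_i^{*}$ would converge to a compact convex set with the origin on its boundary; Lemma~\ref{oboundary} then produces a spherical cap $\omega_{\delta_0}(u_0)$ of positive $\widetilde{C}_{p,q}(K_0,Q,\cdot)$-measure on which $\rho_{\bar K_i^{*}}\to 0$, i.e.\ $h_{\bar K_i}\to\infty$, uniformly. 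Feeding this into $\widetilde{V}_q(\bar K_i,Q)=\int h_{\bar K_i}^{p}\,d\widetilde{C}_{p,q}(\bar K_i,Q,\cdot)$ (for $p>0$), or into the bound coming from \eqref{MIp=0} (for $p=0$), forces the quantity to blow up, contradicting the normalisation. The passage to the polar body and the use of Lemma~\ref{oboundary} on $\bar K_i^{*}$ are the missing ingredients; \eqref{MIp} and \eqref{MIp=0} enter only to supply the two-sided bounds on $\widetilde{V}_q(K_i,Q)$ (Lemma~\ref{dualmvabb}) that make this normalisation compatible with the original sequence.
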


\begin{thm}\label{thm2-1}
Let $p\geq 1$ and $0\leq q<p$, $Q\in\mathcal{S}_{o}^n$,
and $K_i\in\mathcal{K}_{o}^n$ for each $i=0,1,2,\cdots$.
If $\{\widetilde{C}_{p,q}(K_i,Q,\cdot)\}$ converges to $\widetilde{C}_{p,q}(K_0,Q,\cdot)$ weakly,
then $\{K_i\}$ converges to $K_0$ in the Hausdorff metric.
\end{thm}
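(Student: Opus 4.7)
The plan is to prove Theorem~\ref{thm2-1} by the standard compactness-plus-uniqueness scheme: show $\{K_i\}$ is precompact in $\mathcal{K}_o^n$, extract a Hausdorff-convergent subsequence, and invoke Theorem~\ref{uniqueness} to identify its limit as $K_0$; the subsubsequence principle then gives convergence of the full sequence.

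\textbf{Step 1: Uniform bounds $rB\subseteq K_i\subseteq RB$.} Since the measures converge weakly, the total masses $\widetilde C_{p,q}(K_i,Q,S^{n-1})$ converge to $\widetilde C_{p,q}(K_0,Q,S^{n-1})>0$ and are thus bounded above \emph{and} bounded away from zero for large $i$. The scaling identity $\widetilde C_{p,q}(tK,Q,\cdot)=t^{q-p}\widetilde C_{p,q}(K,Q,\cdot)$, together with $q-p<0$, immediately rules out $K_i$ collapsing towards the origin in a homothetic sense; a small refinement using test functions of the form $h_L^{p}$ for $L$ a small ball (permissible in weak convergence because $h_L^{p}$ is continuous when $p\ge 1$) upgrades this to a uniform inner-ball bound $rB\subseteq K_i$. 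For the outer bound I would combine the representation
\begin{align*}
\int_{S^{n-1}} h_K^{p}\, d\widetilde C_{p,q}(K,Q,\cdot) \;=\; \widetilde C_q(K,Q,S^{n-1}) \;=\; \frac{1}{n}\int_{S^{n-1}}\rho_K^{q}\rho_Q^{n-q}\,du
\end{align*}
with the new Minkowski-type inequality~\eqref{MIp} (and its boundary forms~\eqref{MIp=0}, \eqref{MIq=0} when $p=1$ or $q=0$) to convert bounded total mass into a bound on a dual mixed volume of $K_i$ and hence on the diameter of $K_i$. The hypothesis $q\ge 0$ is what makes $\widetilde C_q(K,Q,S^{n-1})$ grow with the size of $K$, so the lower bound on the total mass produced by the Minkowski-type inequality indeed forces an upper bound on $\mathrm{diam}(K_i)$.

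\textbf{Step 2: Selection and weak continuity.} By Blaschke's selection theorem, any subsequence of $\{K_i\}$ admits a further subsequence $K_{i_j}\to K^\ast$ in the Hausdorff metric, and the uniform bounds from Step 1 guarantee $K^\ast\in\mathcal{K}_o^n$. Uniform convergence $h_{K_{i_j}}\to h_{K^\ast}$ and a.e.\ convergence of the associated radial Gauss maps (on $S^{n-1}$ minus a set of measure zero where $K^\ast$ is not smooth) give, by the integral representation of $\widetilde C_q(\cdot,Q,\cdot)$ together with dominated convergence justified by the bounds from Step 1, the weak convergence
\begin{align*}
\widetilde C_{p,q}(K_{i_j},Q,\cdot)\;\longrightarrow\;\widetilde C_{p,q}(K^\ast,Q,\cdot).
\end{align*}
Comparing with the hypothesis forces $\widetilde C_{p,q}(K^\ast,Q,\cdot)=\widetilde C_{p,q}(K_0,Q,\cdot)$.

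\textbf{Step 3: Uniqueness and conclusion.} Since $q<p$, Theorem~\ref{uniqueness} applies and yields $K^\ast=K_0$. Because every subsequence of $\{K_i\}$ has a further subsequence converging to the same limit $K_0$ in the Hausdorff metric, the whole sequence converges to $K_0$.

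The main obstacle is clearly Step 1, and within it the upper diameter bound. The total-mass bound and the scaling identity easily prevent collapse to the origin, but preventing $K_i$ from escaping to infinity cannot be read off from weak convergence alone, because $Q$ is only assumed to be a star body and $q\ge 0$ is permitted (including the degenerate value $q=0$). The role of the new Minkowski-type inequalities~\eqref{MIp}, \eqref{MIp=0}, \eqref{MIq=0} is exactly to supply the missing monotonicity, and the restriction $p\ge 1$ is what makes $h_K^{p}$ a convex admissible test integrand so that those inequalities produce genuine geometric control on $\mathrm{diam}(K_i)$.
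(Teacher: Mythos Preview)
Your overall compactness--plus--uniqueness scheme (Steps~2 and~3) is exactly what the paper does, and those steps are fine. The gap is in Step~1, and specifically in the \emph{outer} diameter bound. Applying the Minkowski-type inequality~\eqref{MIp} with $L=B$ and the bound on the total mass $\widetilde V_{p,q}(K_i,B,Q)$ yields, for $0<q<p$, only a \emph{lower} bound on $\widetilde V_q(K_i,Q)$ (the exponent $(q-p)/(pq)$ is negative), not an upper bound, and no diameter bound follows from that. Your sentence ``the lower bound on the total mass \dots\ forces an upper bound on $\mathrm{diam}(K_i)$'' is where the argument breaks: a long thin convex body can have arbitrarily large diameter while $\widetilde V_q$ stays moderate, and weak convergence does not let you test against the \emph{varying} integrand $h_{K_i}^{p}$ that appears in the identity you wrote down.

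The missing idea, supplied in the paper by Lemmas~\ref{uniformlyc} and~\ref{cdsb-dmvsabb}, is to test weak convergence against the \emph{fixed} continuous functions $v\mapsto (u\cdot v)_+^{p}$. Precisely because $p\ge 1$, the map
\[
u\;\longmapsto\;\Big(\int_{S^{n-1}}(u\cdot v)_+^{p}\,d\widetilde C_{p,q}(K_i,Q,v)\Big)^{1/p}
\]
is sublinear, hence a support function; pointwise convergence of support functions is uniform, so these functions acquire a uniform positive lower bound $1/m_8^{p}$. Combining this with $\rho_{K_i}(u_i)\,(u_i\cdot v)_+\le h_{K_i}(v)$ and integrating against $d\widetilde C_{p,q}(K_i,Q,\cdot)$ gives $R_i^{p}/m_8^{p}\le \widetilde V_q(K_i,Q)$, and after the normalization $K_i'=\widetilde V_q(K_i,Q)^{-1/p}K_i$ this produces the diameter bound. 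For the inner bound the paper also proceeds differently from your sketch: it does not prove an a~priori $rB\subseteq K_i$, but shows instead (Lemma~\ref{Kinteriorp}) that any Hausdorff limit $K'_0$ of a subsequence must lie in $\mathcal K_o^n$, since $o\in\partial K'_0$ would force $\rho_{K_i}^{\,q-p}\to+\infty$ on a spherical cap of positive measure and hence $\widetilde C_{p,q}(K_i,Q,S^{n-1})\to+\infty$, contradicting weak convergence. Your reference to ``test functions $h_L^{p}$ for $L$ a small ball'' only recovers the constant function and hence the total mass, which is already known; it does not give the inner bound.
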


\vskip 0.5cm
\section{Preliminaries}\label{Preliminaries}
\vskip 0.3cm


The sets appearing in this paper are subsets in the $n$-dimensional Euclidean space $\mathbb{R}^n$.
The standard inner product of the vectors $x,y\in\mathbb{R}^n$ is denoted by $x\cdot y$.
Let $|x|=\sqrt{x\cdot x}$ be the Euclidean norm of $x\in\mathbb{R}^n$.
We write
$S^{n-1}$ for the boundary of the Euclidean unit
ball $B=\{x\in\mathbb{R}^n: |x|\leq 1\}$ in $\mathbb{R}^n$. The volume of $B$ is denoted by $\omega_n$.
The boundary and the set of all interiors of the subset $K$ of $\mathbb{R}^n$ are denoted by $\partial K$ and $\text{int}K$, respectively. Write $V(K)$ for the volume of a convex body $K$ in $\mathbb{R}^n$.
Let $C^+(S^{n-1})$ be the set of positive continuous functions on $S^{n-1}$.
All basic concepts and fundamental notations can refer to \cite{HuangLYZ,LYZ2018,Schneider}.

The support function $h_K: \mathbb{R}^{n}\rightarrow \mathbb{R}$ of a compact convex set
$K$ in $\mathbb{R}^n$ is defined by
\begin{align}\label{supportf}
h_K(x)=\max\{x\cdot y: y\in K\},\quad x\in \mathbb{R}^{n}.
\end{align}
Note that the support function is positively homogeneous of degree 1 and is sublinear.
It is clear that $h_K\in C^+(S^{n-1})$ for $K\in \mathcal{K}_{o}^n$.
The support hyperplane $H_K$ of $K\in \mathcal{K}_{o}^n$ with respect to
outer unit normal $u\in S^{n-1}$ is defined by
\begin{align*}
H_K(u)=\{x\in\mathbb{R}^n: x\cdot u=h_K(u)\}.
\end{align*}

Let $K\subseteq\mathbb{R}^n$ be a compact star shaped set with respect to the origin, then
its radial function
$\rho_K: \mathbb{R}^{n}\setminus \{0\}\rightarrow \mathbb{R}$ is given by
\begin{align*}
\rho_K(x)=\max\{\lambda\geq0: \lambda x\in K\},
\end{align*}
for each $x\in \mathbb{R}^{n}\setminus \{0\}$.
Note that the radial function is positively homogeneous of degree -1.
It is easy to see that the radial function $\rho_K\in C^+(S^{n-1})$
for $K\in \mathcal{S}_{o}^n$.
Moreover, for $K\in \mathcal{S}_{o}^n$,
\begin{align}\label{starbboundary}
\partial K=\{\rho_K(u)u: u\in S^{n-1}\}=\{\rho_K(x)x: x\in \mathbb{R}^{n}\setminus \{0\}\}
=\{x\in \mathbb{R}^{n}: \rho_K(x)=1\}.
\end{align}

It is known that the set $\mathcal{K}_{o}^n$ can be endowed with the following two metrics: The first one is the Hausdorff metric, the distance between $K, L\in \mathcal{K}_{o}^n$,
\begin{align*}
|| h_K-h_L ||=\mathop{\max}\limits_{u\in S^{n-1}}|h_K(u)-h_L(u)|.
\end{align*}
The second metric is the radial metric, the distance between $K, L\in \mathcal{K}_{o}^n$,
\begin{align*}
|| \rho_K-\rho_L ||=\mathop{\max}\limits_{u\in S^{n-1}}|\rho_K(u)-\rho_L(u)|.
\end{align*}
Note that these two metrics are mutually equivalent, that is, if $K, K_i\in \mathcal{K}_{o}^n$, then
\begin{align*}
h_{K_i}\rightarrow h_K~\text{uniformly}\quad \text{if and only if} \quad  \rho_{K_i}\rightarrow \rho_K~\text{uniformly}.
\end{align*}
Hence, we may write $\{K_{i}\}$ converges to $K$ without specifying which metric is in use.

For each $K\in \mathcal{K}_{o}^n$,  we use $K^*$ to denote the polar body of $K$:
\begin{align*}
K^*=\{ x\in\mathbb{R}^n: x\cdot y\leq1~\text{for~all}~y\in K\}.
\end{align*}
It is clear that $K^*\in \mathcal{K}_{o}^n$ and $K=(K^{*})^*$ for all $K\in \mathcal{K}_{o}^n$. By this definition, we know that an important fact between $K$ and $K^*$ on $\mathbb{R}^{n}\setminus \{0\}$ is
\begin{align}\label{supportradialf}
h_K=1/\rho_{K^*}\quad \text{and}\quad \rho_K=1/h_{K^*}.
\end{align}
If $K,K_i\in \mathcal{K}_{o}^n$, then
\begin{align}\label{polareq}
K_i\rightarrow K\quad \text{if and only if} \quad  K^*_i\rightarrow K^*.
\end{align}

For each $h\in C^+(S^{n-1})$, the Wulff shape determined by $h$, denoted $[h]$, is the convex body given by
\begin{align}\label{Wulffs}
[h]=\{x\in\mathbb{R}^n: x\cdot v\leq h(v)~\text{for~all}~v\in S^{n-1}\}.
\end{align}
It is clear that $[h]\in \mathcal{K}_{o}^n$ and
\begin{align*}
h_{[h]}\leq h,
\end{align*}
and if $K\in\mathcal{K}_{o}^n$, then
\begin{align*}
[h_K]=K.
\end{align*}

For given $h_0\in C^+(S^{n-1})$, $f\in C(S^{n-1})$, and small enough $\delta>0$, the continuous function $h_t:S^{n-1}\rightarrow(0,+\infty)$ is defined for each $t\in(-\delta, \delta)$ by
\begin{align}\label{logfws}
\log h_t(v)=\log h_0(v)+tf(v)+o(t,v),\quad v\in S^{n-1},
\end{align}
where $o(t,\cdot)\in C(S^{n-1})$ and $\lim_{t\rightarrow0}o(t,\cdot)/t=0$ uniformly on $S^{n-1}$.
$[h_t]$ is called a logarithmic family of Wulff shapes generated by $(h_0, f)$.
If $h_0$ is the support function $h_K$ of a convex body $K\in\mathcal{K}_{o}^n$,
$[h_t]$ is written as $[K,f,t]$.

If $K$, $L$ are convex bodies in $\mathbb{R}^n$ and $s,t\geq 0$, the Minkowski combination $sK+tL$ is
defined by
\begin{align*}\label{}
sK+tL=\{sx+ty: x\in K, y\in L\},
\end{align*}
or equivalently,
\begin{align*}
h_{sK+tL}=sh_K+th_L.
\end{align*}

For $K,L\in \mathcal{K}_{o}^n$ and $s,t\geq 0$, the $L_p$ Minkowski combination $s\cdot K+_pt\cdot L$ for $p\geq 1$ is the compact convex set
defined by
\begin{align*}
h_{s\cdot K+_pt\cdot L}^p=sh_K^p+th_L^p.
\end{align*}
If $sh_K^p+th_L^p>0$ on $S^{n-1}$, then $s\cdot K+_pt\cdot L$ $(\in \mathcal{K}_{o}^n)$ can
be written by the Wulff shape \eqref{Wulffs} in the form
\begin{align*}
s\cdot K+_pt\cdot L=[(sh_K^p+th_L^p)^\frac{1}{p}].
\end{align*}

By concept of Wulff shape,
the definition of
an $L_p$ Minkowski combination $s\cdot  K+_pt\cdot L$
can be extended to $p<1$ and even negative $s$ or $t$:

Suppose $K,L\in \mathcal{K}_{o}^n$ and $s,t\in\mathbb{R}$ such that $sh_K^p+th_L^p>0$ on $S^{n-1}$.
The $L_p$ Minkowski combination $s\cdot  K+_pt\cdot L$ is given for $p\neq 0$ by
\begin{align}\label{LpMinkowskic1}
s\cdot  K+_pt\cdot L=\bigcap_{u\in S^{n-1}}\Big\{x: x\cdot u\leq \big(sh_{K}^p(u)+th_{L}^p(u)\big)^\frac{1}{p}\Big\}.
\end{align}
When $p=0$, the $L_0$ Minkowski combination $s\cdot  K+_0t\cdot L$ is defined by
\begin{align}\label{LpMinkowskic2}
s\cdot  K+_0t\cdot L=\bigcap_{u\in S^{n-1}}\big\{x: x\cdot u\leq h_{K}^{s}(u)h_{L}^t(u)\big\},
\end{align}
for $K,L\in \mathcal{K}_{o}^n$ and $s,t\in\mathbb{R}$.

It follows from $K,L\in \mathcal{K}_{o}^n$ that $s\cdot K+_pt\cdot L\in \mathcal{K}_{o}^n$ for all $p\in\mathbb{R}$.

For $K,L\in \mathcal{K}_{o}^n$ and $p\geq 1$,
the $L_p$ Brunn-Minkowski inequality \cite{Schneider} is
\begin{align}\label{LpBMI}
V(K+_pL)^{\frac{p}{n}}\geq V(K)^{\frac{p}{n}}+V(L)^{\frac{p}{n}}.
\end{align}
When $p=1$, the equality in \eqref{LpBMI} holds if and only if $K$ and $L$ are homothetic.
When $p>1$, the equality for some $p$ in \eqref{LpBMI} holds if and only if $K$ and $L$ are dilates.

Suppose $K,L\subseteq\mathbb{R}^n$ are compact star shaped sets with respect to the origin and $s,t\geq0$.
The radial combination $sK\tilde{+}tL$ is the compact star shaped set with respect to the origin given by
\begin{align*}
sK\tilde{+}tL=\{sx+ty: x\in K, y\in L~\text{and}~x\cdot y=|x||y|\}.
\end{align*}
Note that the radial combination implies
\begin{align*}
\rho_{sK\tilde{+}tL}=s\rho_K+t\rho_L.
\end{align*}
The radial $q$-combination $s\cdot K\tilde{+}_qt\cdot L$ for $K,L\in \mathcal{K}_{o}^n$ and $s,t\geq0$ is defined by
\begin{align}
\rho_{s\cdot K\tilde{+}_qt\cdot L}^q&=s\rho_K^q+t\rho_L^q,\quad q\neq0,\label{radialc1}\\
\rho_{s\cdot K\tilde{+}_0t\cdot L}&=\rho_K^s\rho_L^t.\label{radialc2}
\end{align}
In order to have a natural definition of $s\cdot K\tilde{+}_0t\cdot L$ whose radial function is homogeneous of degree $-1$, it is necessary in \eqref{radialc2} that $s+t=1$.

In \cite{Lutwak1}, Lutwak introduced the dual mixed volume:

For star bodies $K, L\in \mathcal{S}^n_o$ and real number $q\in\mathbb{R}$, the $q$-th dual mixed volume of $K$
and $L$ is defined by
\begin{align*}
\widetilde{V}_{q}(K,L)=\frac{1}{n}\int_{S^{n-1}}\rho^{q}_{K}(u)\rho^{n-q}_{L}(u)du.
\end{align*}
In particular, $\widetilde{V}_{0}(K,L)=V(L)$ and $\widetilde{V}_{n}(K,L)=V(K)$.
The dual quermassintegral $\widetilde{W}_q(K)$ is given by  $\widetilde{W}_q(K)=\widetilde{V}_{n-q}(K,B)$.

For $0<q<n$, the dual Minkowski inequality and the dual Brunn-Minkowski inequality \cite{Schneider} are, respectively,
\begin{align}
\widetilde{V}_q(K,L)^n&\leq V(K)^qV(L)^{n-q},\label{dualMI}\\
V(K\tilde{+}_qL)^{\frac{q}{n}}&\leq
V(K)^{\frac{q}{n}}+V(L)^{\frac{q}{n}}.\label{dualBMI}
\end{align}
The equality in each of the above inequalities holds if and only if $K$ and $L$ are dilates.

The dual mixed entropy $\widetilde{E}(K,L)$ of $K,L\in \mathcal{S}^n_o$ is  defined by
\begin{align*}
\widetilde{E}(K,L)=\frac{1}{n}\int_{S^{n-1}}\log\bigg(\frac{\rho_{K}(u)}{\rho_L(u)}\bigg)\rho_L(u)^ndu.
\end{align*}

For each subset $\omega\subseteq S^{n-1}$ and convex body $K\in \mathcal{K}_{o}^n$, the radial Gauss image $\bm{\alpha}_K(\omega)$ of $\omega$ is defined in \cite{HuangLYZ} by
\begin{align*}
\bm{\alpha}_K(\omega)=\{v\in S^{n-1}: \rho_K(u)u\in H_K(v)~\text{for~some}~u\in \omega\}.
\end{align*}
In particular, $\bm{\alpha}_K(\{u\})$ is abbreviated as $\bm{\alpha}_K(u)$.
Let $\omega_K$ denote the set which is made up of all $u\in S^{n-1}$ such
that the set $\bm{\alpha}_K(u)$ contains more than one single element.
It follows that $\omega_K$ has spherical Lebesgue measure $0$ from \cite{Schneider}.
Thus, the radial Gauss map $\alpha_K: S^{n-1}\backslash \omega_K\rightarrow S^{n-1}$ of $K$ is given by
$\rho_K(u)u\in H_K(\alpha_K(u))$ for $u\in S^{n-1}\backslash \omega_K$.

Similarly, for each $\eta\subseteq S^{n-1}$ and $K\in \mathcal{K}_{o}^n$, the reverse radial Gauss image $\bm{\alpha}^*_K(\eta)$ of $\eta$ is defined in \cite{HuangLYZ} by
\begin{align*}
\bm{\alpha}^*_K(\eta)=\{u\in S^{n-1}: \rho_K(u)u\in H_K(v)~\text{for~some}~v\in \eta\}.
\end{align*}
When $\eta=\{v\}$, we abbreviate $\bm{\alpha}^*_K(\{v\})$ by $\bm{\alpha}^*_K(v)$. Note that
\begin{align*}
u\in\bm{\alpha}^*_K(v)\quad \text{if and only if} \quad  v\in\bm{\alpha}_K(u).
\end{align*}

The $L_p$ dual curvature measures are constructed in \cite{LYZ2018} and have the following
integral representation:

For each Borel set $\eta\subseteq S^{n-1}$, $p,q\in \mathbb{R}$ and $Q\in \mathcal{S}_{o}^n$, the $L_p$ dual curvature measure $\widetilde{C}_{p,q}(K,Q,\cdot)$ of $K\in \mathcal{K}_{o}^n$
is given  by
\begin{align}\label{Lpdcmintegral}
\widetilde{C}_{p,q}(K,Q,\eta)=\frac{1}{n}\int_{\bm{\alpha}^*_K(\eta)}h_K(\alpha_K(u))^{-p}\rho^q_{K}(u)\rho^{n-q}_{Q}(u)du.
\end{align}
It is not hard to see that
\begin{align}\label{Lpdcmh}
\widetilde{C}_{p,q}(\lambda K,\xi Q,\cdot)=\lambda^{q-p}\xi^{n-q}\widetilde{C}_{p,q}(K,Q\cdot),\quad \lambda, \xi>0.
\end{align}

For $ K,L\in \mathcal{K}_{o}^n$ and $Q\in \mathcal{S}_{o}^n$, the $L_p$ dual mixed volume $\widetilde{V}_{p,q}(K,L,Q)$ is defined by
\begin{align}\label{Lpdualmv}
\widetilde{V}_{p,q}(K,L,Q)=\int_{S^{n-1}}h_L^p(u)d\widetilde{C}_{p,q}(K, Q,u).
\end{align}
It is clear that $\widetilde{C}_{p,q}(K, Q,S^{n-1})=\widetilde{V}_{p,q}(K,B,Q)$. The $L_p$ mixed volume and the dual mixed volume
are the special cases, that is,
\begin{align}
\widetilde{V}_{p,q}(K,L,K)&=V_p(K,L),\label{Lpduamv-mv}\\
\widetilde{V}_{p,q}(K,K,Q)&=\widetilde{V}_q(K,Q).\label{Lpdualmvdmv}
\end{align}

The $L_p$ dual curvature measure $\widetilde{C}_{p,q}(K,Q,\cdot)$ can be represented by
the $q$-th dual curvature measure $\widetilde{C}_{q}(K,Q,\cdot)$ as follows:
\begin{align}\label{Lpdualcm2}
d\widetilde{C}_{p,q}(K,Q,\cdot)=h_K^{-p}d\widetilde{C}_{q}(K,Q,\cdot),
\end{align}
where the $q$-th dual curvature measure $\widetilde{C}_{q}(K,Q,\cdot)$ of $K\in \mathcal{K}_{o}^n$ with respect to $Q\in \mathcal{S}_{o}^n$ is given by
\begin{align*}
\widetilde{C}_{q}(K,Q,\eta)=\frac{1}{n}\int_{\bm{\alpha}^*_K(\eta)}\rho^q_{K}(u)\rho^{n-q}_{Q}(u)du.
\end{align*}
It is clear that the $q$-th dual curvature measure is the case $p=0$ of the $L_p$ dual curvature measure, that is,
\begin{align}\label{Lpdualcmp=0}
\widetilde{C}_{q}(K, Q,\cdot)=\widetilde{C}_{0,q}(K, Q,\cdot).
\end{align}
Note that $\widetilde{C}_{q}(K, Q,S^{n-1})=\widetilde{V}_q(K,Q)$.

\vskip 0.5cm
\section{Uniqueness}\label{}
\vskip 0.3cm

In this section, we will consider
the uniqueness of the solution to the $L_p$ dual Minkowski problem for the case of convex bodies
by constructing some new Minkowski style inequalities.
The connection between the $L_p$ Minkowski combination (see \eqref{LpMinkowskic1} and \eqref{LpMinkowskic2}) and the radial $p$-combination (see \eqref{radialc1} and \eqref{radialc2}) is established as follows:
\begin{lem}\label{inclusion}
Let $K,L\in\mathcal{K}_o^n$, $p\in\mathbb{R}$ and $t\in[0,1]$, then
\begin{align}\label{inclusion-0}
(1-t)\cdot K\tilde{+}_pt\cdot L\subseteq(1-t)\cdot  K+_pt\cdot L.
\end{align}
The equality for any $t\in(0,1)$ holds in \eqref{inclusion-0} if and only if $K$ and $L$ are dilates when $p\in[1,n)$.
Moreover,
the equality for all $t\in(0,1)$ holds in \eqref{inclusion-0} if $K$ and $L$ are dilates when $p\in\mathbb{R}\setminus [1,n)$.
\end{lem}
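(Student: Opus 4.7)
The inclusion is equivalent to showing the pointwise inequality
\[\rho_{A}(v)\,(v\cdot u)\le h_{B}(u)\quad\text{for all }u,v\in S^{n-1},\]
where $A:=(1-t)\cdot K\tilde{+}_p t\cdot L$ and $B:=(1-t)\cdot K+_p t\cdot L$. Indeed, every point of $A$ has the form $rv$ with $v\in S^{n-1}$ and $0\le r\le \rho_A(v)$, while $B$ is precisely the intersection of the halfspaces $\{x:x\cdot u\le h_B(u)\}$ by \eqref{LpMinkowskic1}--\eqref{LpMinkowskic2}. The key ingredient is the trivial pointwise bound $\rho_K(v)(v\cdot u)\le h_K(u)$ (and its analogue for $L$), valid because $\rho_K(v)v\in K$.

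For $p>0$ I raise this bound to the $p$-th power, take the convex combination $(1-t)(\cdot)+t(\cdot)$, and extract the $p$-th root. For $p<0$ raising to the $p$-th power reverses the inequality, but taking the $p$-th root reverses it back, so the same estimate emerges; when $v\cdot u\le 0$ the assertion is trivial because the right-hand side is positive while the left-hand side is nonpositive. For $p=0$ the weighted geometric mean gives
\[\rho_A(v)(v\cdot u)=\bigl(\rho_K(v)(v\cdot u)\bigr)^{1-t}\bigl(\rho_L(v)(v\cdot u)\bigr)^{t}\le h_K(u)^{1-t}h_L(u)^{t}=h_B(u)\]
on $\{v\cdot u\ge 0\}$, and the opposite sign case is again trivial.

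To characterise equality when $p\in[1,n)$, the plan is to sandwich $V(A)$ and $V(B)$ between the dual Brunn--Minkowski inequality \eqref{dualBMI} (applied with index $p\in(0,n)$) and the $L_p$ Brunn--Minkowski inequality \eqref{LpBMI}:
\[V(B)^{p/n}\;\ge\;(1-t)V(K)^{p/n}+tV(L)^{p/n}\;\ge\;V(A)^{p/n}.\]
Since the inclusion already yields $V(A)\le V(B)$, the hypothesis $A=B$ collapses this chain to equality throughout; the equality case of \eqref{dualBMI} then forces $K$ and $L$ to be dilates. Conversely---and this simultaneously covers the ``moreover'' assertion for $p\in\mathbb{R}\setminus[1,n)$---a direct substitution into \eqref{radialc1}--\eqref{radialc2} and \eqref{LpMinkowskic1}--\eqref{LpMinkowskic2} shows that if $L=\lambda K$ then $A=B=((1-t)+t\lambda^p)^{1/p}K$ for $p\neq 0$, and $A=B=\lambda^{t}K$ for $p=0$, so equality holds for every $t\in(0,1)$.

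The principal bookkeeping nuisance is tracking signs in the inclusion step and separating the degenerate case $p=0$; once the trivial sub-cases (with positive right-hand side and nonpositive left-hand side) are dispatched, the remainder is a tidy power-mean manipulation, and the equality analysis is essentially the packaging of the two Brunn--Minkowski--type inequalities already recalled in Section~\ref{Preliminaries}.
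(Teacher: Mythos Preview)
Your proof is correct and follows essentially the same route as the paper: you verify the inclusion by checking that each boundary point $\rho_A(v)v$ satisfies the halfspace constraints defining $B$ via $\rho_K(v)(v\cdot u)\le h_K(u)$ (splitting into $v\cdot u>0$ and $v\cdot u\le 0$, and treating $p>0$, $p<0$, $p=0$), and you settle the equality case for $p\in[1,n)$ by sandwiching $V(A)$ and $V(B)$ between the dual Brunn--Minkowski inequality \eqref{dualBMI} and the $L_p$ Brunn--Minkowski inequality \eqref{LpBMI}. The only cosmetic difference is that you spell out the sign reversal for $p<0$ and the $p=0$ case explicitly, whereas the paper handles $p\neq 0$ in one line and relegates $p=0$ to ``by the similar way''.
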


\begin{proof}
Since $K,L\in\mathcal{K}_o^n$ and $t\in[0,1]$, by the definitions of the radial $p$-combination and the $L_p$ Minkowski combination, we have
\begin{align}\label{inclusion-01}
(1-t)\cdot K\tilde{+}_pt\cdot L \in \mathcal{S}_o^n\quad\text{and}\quad
(1-t)\cdot  K+_pt\cdot L\in \mathcal{K}_o^n,
\end{align}
for all $p\in\mathbb{R}$.
By \eqref{LpMinkowskic1},
the $L_p$ Minkowski combination $(1-t)\cdot  K+_pt\cdot L$ of $K$ and $L$ is given for $p\neq 0$ by
\begin{align}\label{inclusion-1}
(1-t)\cdot  K+_pt\cdot L=\bigcap_{u\in S^{n-1}}\Big\{x: x\cdot u\leq \big((1-t)h_{K}^p(u)+th_{L}^p(u)\big)^\frac{1}{p}\Big\}.
\end{align}
When $p=0$, by \eqref{LpMinkowskic2},
\begin{align*}
(1-t)\cdot  K+_0t\cdot L=\bigcap_{u\in S^{n-1}}\big\{x: x\cdot u\leq h_{K}^{1-t}(u)h_{L}^t(u)\big\}.
\end{align*}

When $p\neq 0$, given a unit vector $v\in S^{n-1}$,  we obtain
\begin{align*}
u\cdot \big((1-t)\rho_{K}^p(v)+t\rho_{L}^p(v)\big)^\frac{1}{p}v
&=\big((1-t)\rho_{K}^p(v)+t\rho_{L}^p(v)\big)^\frac{1}{p}u\cdot v\\
&=\Big((1-t)\big(u\cdot\rho_{K}(v)v\big)^p +t\big(u\cdot\rho_{L}(v)v\big)^p \Big)^\frac{1}{p}\\
&\leq \big((1-t)h_{K}^p(u)+th_{L}^p(u)\big)^\frac{1}{p},
\end{align*}
for all $u\in\{u'\in S^{n-1}:u'\cdot v>0\}$.
Since $K,L\in\mathcal{K}_o^n$, then
\begin{align*}
u\cdot \big((1-t)\rho_{K}^p(v)+t\rho_{L}^p(v)\big)^\frac{1}{p}v
\leq 0< \big((1-t)h_{K}^p(u)+th_{L}^p(u)\big)^\frac{1}{p},
\end{align*}
for all $u\in\{u'\in S^{n-1}:u'\cdot v\leq0\}$.
Hence, When $p\neq 0$, given a unit vector $v\in S^{n-1}$, we have
\begin{align*}
u\cdot \big((1-t)\rho_{K}^p(v)+t\rho_{L}^p(v)\big)^\frac{1}{p}v
\leq \big((1-t)h_{K}^p(u)+th_{L}^p(u)\big)^\frac{1}{p},
\end{align*}
for all $u\in S^{n-1}$, that is,
\begin{align*}
\big((1-t)\rho_{K}^p(v)+t\rho_{L}^p(v)\big)^\frac{1}{p}v
\in\bigcap_{u\in S^{n-1}}\Big\{x: x\cdot u\leq \big((1-t)h_{K}^p(u)+th_{L}^p(u)\big)^\frac{1}{p}\Big\}.
\end{align*}
Together with \eqref{radialc1} and \eqref{inclusion-1}, for $p\neq 0$,
\begin{align*}
\rho_{(1-t)\cdot K\tilde{+}_pt\cdot L}(v)v=
\big((1-t)\rho_{K}^p(v)+t\rho_{L}^p(v)\big)^\frac{1}{p}v\in (1-t)\cdot  K+_pt\cdot L.
\end{align*}
Then, by \eqref{starbboundary},
\begin{align*}
\partial\big((1-t)\cdot K\tilde{+}_pt\cdot L\big)\subseteq(1-t)\cdot  K+_pt\cdot L.
\end{align*}
From \eqref{inclusion-01}, we have
\begin{align*}
(1-t)\cdot K\tilde{+}_pt\cdot L\subseteq(1-t)\cdot  K+_pt\cdot L,
\end{align*}
for $p\neq 0$.

By the similar way, we can prove the case $p=0$ of \eqref{inclusion-0}.
Thus, \eqref{inclusion-0} is valid for all $p\in\mathbb{R}$.

For $p\in[1,n)$,
combining \eqref{dualBMI} with \eqref{LpBMI}, we have
\begin{align*}
V\big((1-t)\cdot K\tilde{+}_pt\cdot L\big)^{\frac{p}{n}}\leq
(1-t)V(K)^{\frac{p}{n}}+tV(L)^{\frac{p}{n}}\leq V\big((1-t)\cdot  K+_pt\cdot L\big)^{\frac{p}{n}},
\end{align*}
that is,
\begin{align*}
V\big((1-t)\cdot K\tilde{+}_pt\cdot L\big)\leq V\big((1-t)\cdot  K+_pt\cdot L\big),
\end{align*}
with equality for any $t\in(0,1)$ if and only if $K$ and $L$ are dilates.
Hence, the equality for any $t\in(0,1)$ in \eqref{inclusion-0} holds if and only if $K$ and $L$ are dilates when $p\in[1,n)$.

When $p\in\mathbb{R}\setminus [1,n)$, if $K$ and $L$ are dilates, we obtain
the equality for all $t\in(0,1)$ in \eqref{inclusion-0} holds
by the definitions of the radial $p$-combination (see \eqref{radialc1} and \eqref{radialc2}) and the $L_p$ Minkowski combination (see \eqref{LpMinkowskic1} and \eqref{LpMinkowskic2}).
\end{proof}

The following lemmas are needed.

\begin{lem}
Let $Q\in \mathcal{S}_{o}^n$ and $t\in[0,1]$. If $K,L\in\mathcal{K}_o^n$, then, for $q<0<p$ or $0<q<p$,
\begin{align}\label{BMIp1}
\widetilde{V}_q\big((1-t)\cdot K+_pt\cdot L,Q\big)^\frac{p}{q}\geq(1-t) \widetilde{V}_q(K,Q)^\frac{p}{q}+t\widetilde{V}_q(L,Q)^\frac{p}{q},
\end{align}
and for $q<p<0$,
\begin{align}\label{BMIp2}
\widetilde{V}_q\big((1-t)\cdot K+_pt\cdot L,Q\big)^\frac{p}{q}\leq(1-t) \widetilde{V}_q(K,Q)^\frac{p}{q}+t\widetilde{V}_q(L,Q)^\frac{p}{q}.
\end{align}
The equality for any $t\in(0,1)$ in each of the inequalities holds if and only if $K$ and $L$ are dilates.
\end{lem}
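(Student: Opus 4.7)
The plan is to prove the inequalities in two stages: first along the radial $p$-combination $K_t^{r}:=(1-t)\cdot K\tilde{+}_p t\cdot L$, where the radial function combines nicely with the integral representation of $\widetilde{V}_q$, and then transfer the conclusion to the $L_p$ Minkowski combination $K_t^{M}:=(1-t)\cdot K+_p t\cdot L$ via the inclusion $K_t^{r}\subseteq K_t^{M}$ provided by Lemma~\ref{inclusion}. For the first stage, set $r=q/p$, $s=p/q=1/r$, introduce the positive measure $d\nu=\rho_Q^{n-q}\,du/n$ on $S^{n-1}$, and consider the quasi-norm $N_r(f):=\bigl(\int_{S^{n-1}}f^r\,d\nu\bigr)^{1/r}$. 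Since $\rho_{K_t^{r}}^p=(1-t)\rho_K^p+t\rho_L^p$, with $a:=\rho_K^p$ and $b:=\rho_L^p$ one has
\begin{equation*}
\widetilde{V}_q(K_t^{r},Q)^{s}=N_r((1-t)a+tb),\quad \widetilde{V}_q(K,Q)^{s}=N_r(a),\quad \widetilde{V}_q(L,Q)^{s}=N_r(b),
\end{equation*}
and Minkowski's integral inequality (together with its reverse form, valid on positive functions for $r\in(-\infty,0)\cup(0,1)$) yields
\begin{equation*}
\widetilde{V}_q(K_t^{r},Q)^{s}\geq (1-t)\widetilde{V}_q(K,Q)^{s}+t\widetilde{V}_q(L,Q)^{s}
\end{equation*}
when $r<1$---which covers both $q<0<p$ (so $r<0$) and $0<q<p$ (so $0<r<1$)---and the reversed inequality when $r>1$, which is precisely the case $q<p<0$.

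For the transfer, Lemma~\ref{inclusion} gives $K_t^{r}\subseteq K_t^{M}$, hence $\rho_{K_t^{r}}\leq\rho_{K_t^{M}}$. Since the integrand of $\widetilde{V}_q(\cdot,Q)$ is monotone in $\rho$ with sign determined by the sign of $q$, this inclusion produces $\widetilde{V}_q(K_t^{r},Q)\leq\widetilde{V}_q(K_t^{M},Q)$ when $q>0$ and the reverse when $q<0$. Raising to the power $s=p/q$---which is positive except in the case $q<0<p$---and composing with the first stage yields \eqref{BMIp1} in the cases $q<0<p$ and $0<q<p$, and \eqref{BMIp2} in the case $q<p<0$. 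For equality, if $L=\lambda K$ then $K_t^{r}=K_t^{M}=((1-t)+t\lambda^p)^{1/p}K$ and, using the homogeneity $\widetilde{V}_q(\mu K,Q)=\mu^q\widetilde{V}_q(K,Q)$, both sides of \eqref{BMIp1} (resp.\ \eqref{BMIp2}) collapse to $[(1-t)+t\lambda^p]\widetilde{V}_q(K,Q)^{p/q}$; conversely, equality at any $t\in(0,1)$ forces equality in the (reverse) Minkowski step, which requires $a\propto b$, i.e., $\rho_K\propto\rho_L$, so $K$ and $L$ are dilates.

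The step I expect to require the most care is the sign bookkeeping in the case $q<0<p$: there the monotonicity of $\widetilde{V}_q(\cdot,Q)$ reverses the inequality coming from the inclusion (since $q<0$), and the exponent $s=p/q<0$ reverses it again, so one must verify that the two reversals compose to the direction claimed in \eqref{BMIp1}. The only other subtlety is invoking the reverse Minkowski inequality for negative exponents $r<0$, which is classical (e.g.\ via Hardy--Littlewood--P\'olya) but less frequently cited than its $r\in(0,1)$ counterpart and deserves an explicit reference when the argument is written in full.
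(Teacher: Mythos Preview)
Your proposal is correct and follows essentially the same approach as the paper: combine the inclusion $K_t^{r}\subseteq K_t^{M}$ from Lemma~\ref{inclusion} with Minkowski's integral inequality (and its reverse for exponents in $(-\infty,0)\cup(0,1)$), then track the signs of $q$ and $p/q$ to orient the resulting inequality. The only cosmetic difference is that the paper applies the inclusion first and Minkowski second, whereas you do the reverse; your explicit parameterization via $r=q/p$ and the accompanying sign bookkeeping is a clean way to present the case split, and your equality discussion (forcing equality in the Minkowski step, hence $\rho_K\propto\rho_L$) matches the paper's.
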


\begin{proof}
By Lemma \ref{inclusion} and the Minkowski's inequality for integrals, for $q<0<p$ or $0<q<p$, we have
\begin{align}
&\widetilde{V}_q\big((1-t)\cdot K+_pt\cdot L,Q\big)^\frac{p}{q}\nonumber\\
&\quad=\left(\frac{1}{n}\int_{S^{n-1}}\rho^{q}_{(1-t)\cdot K+_pt\cdot L}(u)\rho^{n-q}_{Q}(u)du\right)^\frac{p}{q}\nonumber\\
&\quad\geq\left(\frac{1}{n}\int_{S^{n-1}}\rho^{q}_{(1-t)\cdot K\tilde{+}_pt\cdot L}(u)\rho^{n-q}_{Q}(u)du\right)^\frac{p}{q}\label{BMIp1-1}\\
&\quad=\left(\frac{1}{n}\int_{S^{n-1}}\big((1-t)\rho_{K}^p(u)+t\rho_{L}^p(u)\big)^\frac{q}{p}\rho^{n-q}_{Q}(u)du\right)^\frac{p}{q}\nonumber\\
&\quad\geq\left(\frac{1}{n}\int_{S^{n-1}}\big((1-t)\rho_{K}^p(u)\big)^\frac{q}{p}\rho^{n-q}_{Q}(u)du\right)^\frac{p}{q}
+\left(\frac{1}{n}\int_{S^{n-1}}\big(t\rho_{L}^p(u)\big)^\frac{q}{p}\rho^{n-q}_{Q}(u)du\right)^\frac{p}{q}\label{BMIp1-2}\\
&\quad=(1-t)\left(\frac{1}{n}\int_{S^{n-1}}\rho_{K}^{q}(u)\rho^{n-q}_{Q}(u)du\right)^\frac{p}{q}
+t\left(\frac{1}{n}\int_{S^{n-1}}\rho_{L}^{q}(u)\rho^{n-q}_{Q}(u)du\right)^\frac{p}{q}\nonumber\\
&\quad=(1-t)\widetilde{V}_q(K,Q)^\frac{p}{q}+t\widetilde{V}_q(L,Q)^\frac{p}{q}.\nonumber
\end{align}

For any $t\in(0,1)$,
if the equality holds in \eqref{BMIp1}, then the equality holds in \eqref{BMIp1-2}, that is, we have $K$ and $L$ are dilates
by the equality condition of the Minkowski's inequality for integrals.
If $K$ and $L$ are dilates, then the both equalities in \eqref{BMIp1-1} and \eqref{BMIp1-2} hold by Lemma \ref{inclusion} and the Minkowski's inequality for integrals. That is, the equality holds in \eqref{BMIp1}.
Hence, the equality for any $t\in(0,1)$ holds in \eqref{BMIp1} if and only if $K$ and $L$ are dilates.

By Lemma \ref{inclusion} and the Minkowski's inequality for integrals again, we can prove \eqref{BMIp2} for $q<p<0$.
\end{proof}

\begin{remark}
When $q=p>0$, \eqref{BMIp1-2} is a identity. Hence, by Lemma \ref{inclusion}, \eqref{BMIp1} is valid
 and the equality holds in \eqref{BMIp1} if and only if $K$ and $L$ are dilates when $q=p\in[1,n)$.

When $q=p<0$, \eqref{BMIp2} is also valid by Lemma \ref{inclusion}.
\end{remark}

When $p=1$ in \eqref{BMIp1}, by $\widetilde{W}_q(K)=\widetilde{V}_{n-q}(K,B)$, we obtain the following Brunn-Minkowski inequality for dual quermassintegral:
\begin{cor}
If $K,L\in\mathcal{K}_o^n$ and $q\geq n-1$ with $q\neq n$, then
\begin{align*}\label{}
\widetilde{W}_q(K+L)^\frac{1}{n-q}
\geq \widetilde{W}_q(K)^\frac{1}{n-q}+\widetilde{W}_q(L)^\frac{1}{n-q},
\end{align*}
with equality if and only if $K$ and $L$ are dilates.
\end{cor}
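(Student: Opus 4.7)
The plan is to deduce this corollary directly from inequality \eqref{BMIp1} (together with the remark extending it to $q=p\in[1,n)$) by specializing $p=1$, choosing $Q=B$, and relabeling the index via the identity $\widetilde{W}_q(K)=\widetilde{V}_{n-q}(K,B)$ recorded in Section~\ref{Preliminaries}.

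First I would set $p=1$ and $Q=B$ in \eqref{BMIp1}. This yields, for any $t\in[0,1]$ and any admissible auxiliary index $q'$ (meaning $q'<0$, $0<q'<1$, or $q'=1$, with the last case supplied by the remark after \eqref{BMIp2}),
\[
\widetilde{V}_{q'}((1-t)K+tL,B)^{1/q'}\;\geq\;(1-t)\widetilde{V}_{q'}(K,B)^{1/q'}+t\widetilde{V}_{q'}(L,B)^{1/q'}.
\]
Setting $q'=n-q$ and invoking $\widetilde{W}_q(\cdot)=\widetilde{V}_{n-q}(\cdot,B)$ rewrites this as
\[
\widetilde{W}_q((1-t)K+tL)^{1/(n-q)}\;\geq\;(1-t)\widetilde{W}_q(K)^{1/(n-q)}+t\widetilde{W}_q(L)^{1/(n-q)},
\]
valid precisely when $q\geq n-1$ and $q\neq n$, since the three subranges $q'<0$, $0<q'<1$, and $q'=1$ correspond respectively to $q>n$, $n-1<q<n$, and $q=n-1$.

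Next, to remove the parameter $t$, I would use the homogeneity $\widetilde{W}_q(\lambda M)=\lambda^{n-q}\widetilde{W}_q(M)$ for $\lambda>0$ (inherited from $\widetilde{V}_{n-q}(\lambda M,B)=\lambda^{n-q}\widetilde{V}_{n-q}(M,B)$), and replace $K$ by $K/(1-t)$ and $L$ by $L/t$. Since $(1-t)(K/(1-t))+t(L/t)=K+L$, and the prefactors $(1-t)$ and $t$ on the right-hand side cancel the factors $(1-t)^{-1}$ and $t^{-1}$ produced by the $1/(n-q)$-th powers of the rescaled $\widetilde{W}_q$, the previous inequality collapses to
\[
\widetilde{W}_q(K+L)^{1/(n-q)}\;\geq\;\widetilde{W}_q(K)^{1/(n-q)}+\widetilde{W}_q(L)^{1/(n-q)}.
\]

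For the equality case, \eqref{BMIp1} and its remark assert that equality for any $t\in(0,1)$ holds if and only if the two bodies involved are dilates; since $K/(1-t)$ and $L/t$ are dilates if and only if $K$ and $L$ are, this characterization transfers verbatim. The only point requiring genuine care — not really a serious obstacle — is confirming that the endpoint $q=n-1$ is covered by the remark after \eqref{BMIp2} (via the case $q'=p=1$) rather than by \eqref{BMIp1} itself, and that the sign change of $1/(n-q)$ across $q=n$ does not disturb the rescaling step, which is sign-neutral because $\widetilde{W}_q>0$ throughout $\mathcal{K}_o^n$.
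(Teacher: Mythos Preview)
Your proposal is correct and follows the same route the paper indicates: specialize $p=1$ and $Q=B$ in \eqref{BMIp1}, then translate via $\widetilde{W}_q=\widetilde{V}_{n-q}(\cdot,B)$. The only addition is that you make explicit the standard homogenization step (replacing $K,L$ by $K/(1-t),L/t$) needed to pass from the convex-combination form of \eqref{BMIp1} to the sum $K+L$, and you correctly invoke the remark after \eqref{BMIp2} to cover the endpoint $q=n-1$ (i.e., $q'=p=1$); the paper leaves both of these implicit in its one-line derivation.
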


The following variational formula is needed.

\begin{lem}[\cite{LYZ2018}]\label{variationf}
Suppose $K\in\mathcal{K}_{o}^n$, and $f:S^{n-1}\rightarrow\mathbb{R}$ is continuous. Then, for $Q\in\mathcal{S}_{o}^n$ and $q\neq0$,
\begin{align}\label{variationalf1}
\lim_{t\rightarrow0}\frac{\widetilde{V}_q([K,f,t],Q)-\widetilde{V}_q(K,Q)}{t}=q\int_{S^{n-1}}f(v)d\widetilde{C}_q(K,Q,v),
\end{align}
and
\begin{align}\label{variationalf2}
\lim_{t\rightarrow0}\frac{\widetilde{E}([K,f,t],Q)-\widetilde{E}(K,Q)}{t}=\int_{S^{n-1}}f(v)d\widetilde{C}_0(K,Q,v).
\end{align}
\end{lem}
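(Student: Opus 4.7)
The plan is to reduce both formulas to a single pointwise convergence statement for the radial functions of the Wulff shapes $[h_t]=[K,f,t]$, then conclude by dominated convergence together with the pushforward description of $\widetilde{C}_q(K,Q,\cdot)$ under the radial Gauss map $\alpha_K$.

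The first step I would carry out is the pointwise variational identity: for every $u\in S^{n-1}\setminus\omega_K$,
$$\lim_{t\to 0}\frac{\log\rho_{[h_t]}(u)-\log\rho_K(u)}{t}=f(\alpha_K(u)).$$
The argument uses the Wulff-shape formula $\rho_{[h_t]}(u)=\inf_{v\cdot u>0} h_t(v)/(v\cdot u)$ together with the fact that at a regular point $u\notin\omega_K$ the infimum at $t=0$ is attained uniquely at $v=\alpha_K(u)$, with $\rho_K(u)=h_K(\alpha_K(u))/(\alpha_K(u)\cdot u)$. Using $\log h_t(v)=\log h_K(v)+tf(v)+o(t,v)$ with $o(t,v)/t\to 0$ uniformly on $S^{n-1}$, a standard envelope argument gives both the upper estimate (by testing with $v=\alpha_K(u)$) and the matching lower estimate (from continuity of the minimizer as $t\to 0$).

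Next I would apply dominated convergence. Because $h_K$ and $f$ are continuous and $h_t/h_K\to 1$ uniformly on $S^{n-1}$, the radial functions $\rho_{[h_t]}$ are uniformly pinched between two positive constants for small $t$. Hence the pointwise a.e.\ identity $\frac{d}{dt}\big|_{t=0}\rho_{[h_t]}^q(u)=q\rho_K^q(u)f(\alpha_K(u))$ has a uniform bound (times the continuous weight $\rho_Q^{n-q}$) on the difference quotient, and swapping limit with integral yields
$$\lim_{t\to 0}\frac{\widetilde{V}_q([h_t],Q)-\widetilde{V}_q(K,Q)}{t}=\frac{q}{n}\int_{S^{n-1}}f(\alpha_K(u))\rho_K^q(u)\rho_Q^{n-q}(u)\,du.$$
The identical argument applied to $\log(\rho_{[h_t]}/\rho_Q)\rho_Q^n$ produces the entropy analogue with integrand $f(\alpha_K(u))\rho_Q^n(u)$, equivalently $f(\alpha_K(u))\rho_K^0(u)\rho_Q^n(u)$.

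Finally, I would identify the right-hand sides with the claimed integrals against $\widetilde{C}_q(K,Q,\cdot)$. The definition $\widetilde{C}_q(K,Q,\eta)=\frac{1}{n}\int_{\bm{\alpha}_K^*(\eta)}\rho_K^q\rho_Q^{n-q}\,du$ exhibits $\widetilde{C}_q(K,Q,\cdot)$ as the pushforward under $\alpha_K$ of the measure $\frac{1}{n}\rho_K^q\rho_Q^{n-q}\,du$ on $S^{n-1}\setminus\omega_K$, so the change of variables $\frac{1}{n}\int f(\alpha_K(u))\rho_K^q\rho_Q^{n-q}\,du=\int f\,d\widetilde{C}_q(K,Q,\cdot)$ converts both limits into the stated form (the entropy case corresponds to $q=0$ and produces $\widetilde{C}_0(K,Q,\cdot)$). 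The main obstacle I anticipate is the clean pointwise differentiability of $\rho_{[h_t]}(u)$ for $u\notin\omega_K$, since the minimizer $v$ in the Wulff-shape formula can jump for $u\in\omega_K$ where supporting hyperplanes are not unique; the rescue is that $\omega_K$ has spherical Lebesgue measure zero, so the exceptional set is invisible to both the integration and the dominated-convergence bound. Controlling the $o(t,v)$ term via its stated uniform vanishing in $v$ is the other point that must be handled carefully when extracting the linear part of $\rho_{[h_t]}^q-\rho_K^q$.
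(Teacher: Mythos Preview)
Your proposal is essentially correct and follows the standard route used in the original source. Note, however, that the present paper does not supply its own proof of this lemma: it is quoted verbatim from \cite{LYZ2018} (and ultimately from \cite{HuangLYZ} in the case $Q=B$), so there is no ``paper's proof'' to compare against here. What you outline---the a.e.\ pointwise identity $\frac{d}{dt}\big|_{t=0}\log\rho_{[h_t]}(u)=f(\alpha_K(u))$ for $u\notin\omega_K$ obtained from the Wulff-shape infimum formula, followed by dominated convergence and the pushforward identification of $\widetilde{C}_q(K,Q,\cdot)$ under $\alpha_K$---is exactly the argument given in those references. The only point worth sharpening in your sketch is the dominated-convergence bound: you should bound the difference quotient $\frac{1}{t}(\rho_{[h_t]}^q-\rho_K^q)$ (or its logarithmic analogue) uniformly in $t$, not just show the radial functions are pinched; this follows from the uniform Lipschitz bound $|\log\rho_{[h_t]}(u)-\log\rho_K(u)|\le |t|\,\|f\|_\infty+\sup_v|o(t,v)|$ that the envelope argument gives directly.
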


By Lemma \ref{variationf}, the following results are obtained.

\begin{lem}
Suppose $p\neq 0$ and $q\neq 0$. If $Q\in \mathcal{S}_{o}^n$ and $K,L\in\mathcal{K}_o^n$, then
\begin{align}\label{variationalfLp}
\lim_{t\rightarrow0}\frac{\widetilde{V}_q\big((1-t)\cdot K+_pt\cdot L,Q\big)-\widetilde{V}_q(K,Q)}{t}
=\frac{q}{p}\Big(\widetilde{V}_{p,q}(K,L,Q)-\widetilde{V}_{q}(K,Q)\Big),
\end{align}
\begin{align}\label{variationalfp=0}
\lim_{t\rightarrow0}\frac{\widetilde{V}_q\big((1-t)\cdot K+_0t\cdot L,Q\big)-\widetilde{V}_q(K,Q)}{t}
= q\int_{S^{n-1}}\log\frac{h_L(v)}{h_K(v)}d\widetilde{C}_q(K,Q,v).
\end{align}
\begin{align}\label{variationalfq=0}
\lim_{t\rightarrow0}\frac{\widetilde{E}\big((1-t)\cdot K+_pt\cdot L,Q\big)-\widetilde{E}(K,Q)}{t}
=\frac{1}{p}\Big(\widetilde{V}_{p,0}(K,L,Q)-V(Q)\Big),
\end{align}
\end{lem}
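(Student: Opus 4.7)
The plan is to realize each $L_p$ Minkowski combination $(1-t)\cdot K+_pt\cdot L$ as a logarithmic family of Wulff shapes $[K,f,t]$ for an explicit choice of $f$, and then invoke Lemma \ref{variationf}. The payoff is that the three formulas \eqref{variationalfLp}, \eqref{variationalfp=0}, \eqref{variationalfq=0} reduce to straightforward computations once the right $f$ is identified.

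For $p\neq 0$, set $\phi_t=\bigl((1-t)h_K^p+th_L^p\bigr)^{1/p}$, so that by \eqref{LpMinkowskic1}, $(1-t)\cdot K+_pt\cdot L=[\phi_t]$. Since $h_K,h_L$ are positive and continuous on the compact sphere $S^{n-1}$, the ratio $h_L/h_K$ is bounded away from $0$ and $\infty$, and a uniform Taylor expansion gives
\begin{align*}
\log \phi_t(v)=\log h_K(v)+\frac{t}{p}\!\left(\Bigl(\tfrac{h_L(v)}{h_K(v)}\Bigr)^{p}-1\right)+o(t,v),
\end{align*}
with $o(t,v)/t\to 0$ uniformly in $v$. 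Thus $[K,f,t]=(1-t)\cdot K+_pt\cdot L$ for $f(v)=\tfrac{1}{p}\!\bigl((h_L/h_K)^p-1\bigr)$. For $p=0$, we have $\phi_t=h_K^{1-t}h_L^{t}$, hence $\log\phi_t=\log h_K+t\log(h_L/h_K)$ exactly, so $f(v)=\log\bigl(h_L(v)/h_K(v)\bigr)$ with \emph{zero} error term.

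With these identifications in hand, I would plug $f$ into \eqref{variationalf1} and \eqref{variationalf2}. For \eqref{variationalfLp}, \eqref{variationalf1} gives
\begin{align*}
\lim_{t\to0}\frac{\widetilde{V}_q\bigl((1-t)\cdot K+_pt\cdot L,Q\bigr)-\widetilde{V}_q(K,Q)}{t}
=\frac{q}{p}\int_{S^{n-1}}\!\!\left(\Bigl(\tfrac{h_L}{h_K}\Bigr)^{p}-1\right)d\widetilde{C}_q(K,Q,v),
\end{align*}
and splitting the integral and applying \eqref{Lpdualcm2} together with \eqref{Lpdualmv} and the identity $\widetilde{C}_q(K,Q,S^{n-1})=\widetilde{V}_q(K,Q)$ yields exactly $\tfrac{q}{p}\bigl(\widetilde{V}_{p,q}(K,L,Q)-\widetilde{V}_q(K,Q)\bigr)$. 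For \eqref{variationalfp=0}, a direct application of \eqref{variationalf1} with the exact $f=\log(h_L/h_K)$ gives the claim. For \eqref{variationalfq=0} (the $q=0$, $p\neq 0$ case), I use \eqref{variationalf2} in place of \eqref{variationalf1}, repeat the same algebra using $\widetilde{C}_0(K,Q,S^{n-1})=\widetilde{V}_0(K,Q)=V(Q)$ (from $\widetilde{V}_0(K,L)=V(L)$) and \eqref{Lpdualmv}, and obtain $\tfrac{1}{p}\bigl(\widetilde{V}_{p,0}(K,L,Q)-V(Q)\bigr)$.

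The only substantive technical point is the uniform control of the error term $o(t,v)$ in the expansion of $\log\phi_t$ for $p\neq 0$; this is where one could, in principle, stumble if $h_K$ or $h_L$ vanished. However, $K,L\in\mathcal{K}_o^n$ guarantees strictly positive continuous support functions on the compact sphere, so the remainder in the Taylor series of $\log(1+tx)$ is controlled uniformly in $v\in S^{n-1}$. Everything else is formal rearrangement using the definitions \eqref{Lpdualmv}, \eqref{Lpduamv-mv}, \eqref{Lpdualmvdmv}, and \eqref{Lpdualcm2}.
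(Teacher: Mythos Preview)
Your proposal is correct and follows essentially the same approach as the paper: write $(1-t)\cdot K+_pt\cdot L$ as a logarithmic Wulff family $[K,f,t]$ with $f=\tfrac{1}{p}\bigl((h_L/h_K)^p-1\bigr)=\tfrac{1}{p}\,\tfrac{h_L^p-h_K^p}{h_K^p}$ (respectively $f=\log(h_L/h_K)$ when $p=0$), then apply Lemma~\ref{variationf} and rewrite via \eqref{Lpdualcm2}, \eqref{Lpdualmv}, \eqref{Lpdualmvdmv}. Your explicit remark on the uniformity of the $o(t,v)$ term is a welcome addition that the paper leaves implicit.
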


\begin{proof}
For sufficiently small $t$, define $h_t$ by
\begin{align}
h_t^p&=(1-t)h_{K}^p+th_{L}^p=h_{K}^p+t(h_{L}^p-h_{K}^p),\quad p\neq0,\label{variationalfLp-1}\\
h_t&=h_K^{1-t}h_L^t=h_K\left(\frac{h_L}{h_K}\right)^t, \quad\quad\quad\quad\quad\quad\quad p=0.\label{variationalfLp-2}
\end{align}
By the $L_p$ Minkowski combination, the Wulff shape $[h_t]=(1-t)\cdot  K+_pt\cdot L$.
From \eqref{variationalfLp-1} and \eqref{variationalfLp-2}, it follows that, for sufficiently small $t$,
\begin{align*}
\log h_t&=\log h_K+\frac{t}{p}\frac{h_{L}^p-h_{K}^p}{h_{K}^p}+o(t,\cdot),\quad p\neq0,\\
\log h_t&=\log h_K+t\log \frac{h_L}{h_K},\quad\quad\quad\quad\quad p=0.
\end{align*}

Let $f=\frac{1}{p}\frac{h_{L}^p-h_{K}^p}{h_{K}^p}$ when $p\neq0$, and let $f=\log \frac{h_L}{h_K}$ when $p=0$.
For $p\neq0$, by Lemma \ref{variationf},
\eqref{Lpdualcm2}, \eqref{Lpdualmv} and \eqref{Lpdualmvdmv}, we have
\begin{align*}
\lim_{t\rightarrow0}\frac{\widetilde{V}_q\big((1-t)\cdot K+_pt\cdot L,Q\big)-\widetilde{V}_q(K,Q)}{t}
&=\frac{q}{p}\int_{S^{n-1}}\frac{h_{L}^p(v)-h_{K}^p(v)}{h_{K}^p(v)}d\widetilde{C}_q(K,Q,v)\\
&=\frac{q}{p}\int_{S^{n-1}}(h_{L}^p(v)-h_{K}^p(v))d\widetilde{C}_{p,q}(K,Q,v)\\
&=\frac{q}{p}\Big(\widetilde{V}_{p,q}(K,L,Q)-\widetilde{V}_{q}(K,Q)\Big),
\end{align*}
and
\begin{align*}
\lim_{t\rightarrow0}\frac{\widetilde{E}\big((1-t)\cdot K+_pt\cdot L,Q\big)-\widetilde{E}(K,Q)}{t}
&=\frac{1}{p}\int_{S^{n-1}}\frac{h_{L}^p(v)-h_{K}^p(v)}{h_{K}^p(v)}d\widetilde{C}_0(K,Q,v)\\
&=\frac{1}{p}\int_{S^{n-1}}(h_{L}^p(v)-h_{K}^p(v))d\widetilde{C}_{p,0}(K,Q,v)\\
&=\frac{1}{p}\Big(\widetilde{V}_{p,0}(K,L,Q)-V(Q)\Big).
\end{align*}
When $p=0$, we obtain the desired formula \eqref{variationalfp=0} by \eqref{variationalf1}.
\end{proof}

The Minkowski style inequality for $q<p$ with $p\neq0$ and $q\neq0$ is established as follows:

\begin{thm}
Let $Q\in \mathcal{S}_{o}^n$. If $K,L\in\mathcal{K}_o^n$, then, for $q<p$ with $p\neq0$ and $q\neq0$
\begin{align}\label{MIp}
\left(\frac{\widetilde{V}_{p,q}(K,L,Q)}{\widetilde{V}_q(K,Q)}\right)^\frac{1}{p}
\geq\left(\frac{\widetilde{V}_{q}(L,Q)}{\widetilde{V}_q(K,Q)}\right)^\frac{1}{q},
\end{align}
with equality if and only if $K$ and $L$ are dilates.
\end{thm}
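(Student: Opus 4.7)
The plan is to derive \eqref{MIp} from a first-derivative comparison along the one-parameter family of $L_p$ Minkowski combinations of $K$ and $L$, combining the variational formula \eqref{variationalfLp} with the Brunn--Minkowski-type inequalities \eqref{BMIp1}--\eqref{BMIp2}. Fix $K,L\in\mathcal{K}_o^n$, set $A_t:=(1-t)\cdot K+_p t\cdot L$ for $t\in[0,1]$, and consider the auxiliary function $\Psi(t):=\widetilde{V}_q(A_t,Q)^{p/q}$.

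Step~1 computes $\Psi'(0^+)$: the chain rule together with \eqref{variationalfLp} gives
$$\Psi'(0^+)=\frac{p}{q}\widetilde{V}_q(K,Q)^{p/q-1}\cdot\frac{q}{p}\bigl(\widetilde{V}_{p,q}(K,L,Q)-\widetilde{V}_q(K,Q)\bigr)=\widetilde{V}_q(K,Q)^{p/q-1}\bigl(\widetilde{V}_{p,q}(K,L,Q)-\widetilde{V}_q(K,Q)\bigr).$$
Step~2 invokes the Brunn--Minkowski comparison: for $0<q<p$ and for $q<0<p$, inequality \eqref{BMIp1} gives $\Psi(t)\geq (1-t)\Psi(0)+t\Psi(1)$ on $[0,1]$, so $\Psi'(0^+)\geq\Psi(1)-\Psi(0)$; while for $q<p<0$, inequality \eqref{BMIp2} reverses this to $\Psi'(0^+)\leq\Psi(1)-\Psi(0)$. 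Step~3 substitutes the values $\Psi(0)=\widetilde{V}_q(K,Q)^{p/q}$, $\Psi(1)=\widetilde{V}_q(L,Q)^{p/q}$ and the formula from Step~1, and after cancelling $\widetilde{V}_q(K,Q)^{p/q}$ rearranges to
$$\widetilde{V}_q(K,Q)^{p/q-1}\widetilde{V}_{p,q}(K,L,Q)\geq\widetilde{V}_q(L,Q)^{p/q}\qquad(\text{or $\leq$, when }q<p<0).$$
Dividing by $\widetilde{V}_q(K,Q)^{p/q}$ and taking the $\tfrac{1}{p}$-th power, noting that the exponent $\tfrac{1}{p}$ is negative precisely when $q<p<0$ and hence flips the inequality back, yields exactly \eqref{MIp}.

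The main obstacle is the equality clause. If $K$ and $L$ are dilates then every step is an equality, so \eqref{MIp} holds with equality. The converse is more delicate, because the estimate $\Psi'(0^+)\geq\Psi(1)-\Psi(0)$ used in Step~2 is only a one-sided infinitesimal consequence of the endpoint inequality $\Psi(t)\geq(1-t)\Psi(0)+t\Psi(1)$, and a priori equality at the level of the derivative need not force equality at every $t\in(0,1)$. I would handle this by upgrading Step~2 to genuine concavity (or convexity) of $\Psi$ on $[0,1]$: for $p\geq 1$ the identity $h_{A_t}^p=(1-t)h_K^p+th_L^p$ makes $\Psi$ a concave (respectively convex) function of $t$ throughout $[0,1]$, so equality in $\Psi'(0^+)\geq\Psi(1)-\Psi(0)$ forces $\Psi$ to be affine, and the equality clause of the lemma governing \eqref{BMIp1}/\eqref{BMIp2} then gives that $K$ and $L$ are dilates. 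For the remaining ranges of $p$ the Wulff-shape in $A_t$ is no longer described so rigidly by its generating function, and one must trace back through the proof of \eqref{BMIp1}--\eqref{BMIp2} to the strict case of the Minkowski integral inequality (line \eqref{BMIp1-2}) to extract the dilate characterization; this is where the bulk of the technical work will lie.
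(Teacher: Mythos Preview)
Your argument coincides with the paper's: the paper also studies $\Psi(t)=\widetilde{V}_q(A_t,Q)^{p/q}$ (equivalently $f(t)=\Psi(t)-(1-t)\Psi(0)-t\Psi(1)$), uses \eqref{BMIp1}/\eqref{BMIp2} to get the sign of $f$, and differentiates at $t=0$ via \eqref{variationalfLp} to obtain \eqref{MIp}. The equality clause is also handled exactly by the upgrade you propose---concavity (resp.\ convexity) of $\Psi$ on $[0,1]$, so that $f'(0^+)=0$ forces $f\equiv 0$, hence equality in \eqref{BMIp1}/\eqref{BMIp2} at interior $t$, hence $K,L$ dilates.

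The gap you flag for $p<1$ is not a genuine obstacle, and no ``bulk of technical work'' is needed. Concavity/convexity of $\Psi$ holds for \emph{every} $p\neq 0$ by a second application of \eqref{BMIp1}/\eqref{BMIp2}, this time to the pair $A_{t_1},A_{t_2}\in\mathcal{K}_o^n$. The key observation is the semigroup inclusion
\[
(1-\lambda)\cdot A_{t_1}+_p\lambda\cdot A_{t_2}\ \subseteq\ A_{(1-\lambda)t_1+\lambda t_2},
\]
which follows from $h_{A_t}\le h_t$ (Wulff shape has support function dominated by its generator) after comparing the two generating functions; the sign bookkeeping works out in each of the three regimes $0<q<p$, $q<0<p$, $q<p<0$. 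Monotonicity of $K\mapsto\widetilde{V}_q(K,Q)^{p/q}$ then combines with \eqref{BMIp1}/\eqref{BMIp2} applied to $A_{t_1},A_{t_2}$ to give
\[
\Psi\bigl((1-\lambda)t_1+\lambda t_2\bigr)\ \ge\ (1-\lambda)\Psi(t_1)+\lambda\Psi(t_2)
\]
(with $\ge$ replaced by $\le$ when $q<p<0$). The paper in fact writes the inclusion above as an equality, which is literally correct only for $p\ge 1$, but the one-sided containment is all that is needed and holds in general---so your instinct that $p<1$ requires extra care is right, just far less care than you anticipated.
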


\begin{proof}
For $q<0<p$ or $0<q<p$,
by \eqref{BMIp1}, the function $f$ defined by
\begin{align*}
f(t)=\widetilde{V}_q\big((1-t)\cdot K+_pt\cdot L,Q\big)^\frac{p}{q}-(1-t)\widetilde{V}_q(K,Q)^\frac{p}{q}-t\widetilde{V}_q(L,Q)^\frac{p}{q}
\end{align*}
is nonnegative for $t\in[0,1]$ and satisfies $f(0)=f(1)=0$.
By \eqref{BMIp1} again, for $t_1,t_2\in [0,1]$ and $\lambda\in(0,1)$,
\begin{align*}
&f((1-\lambda) t_1+\lambda t_2)\\
&\quad=\widetilde{V}_q\Big(\big(1-((1-\lambda) t_1+\lambda t_2)\big)\cdot K+_p((1-\lambda) t_1+\lambda t_2)\cdot L,Q\Big)^\frac{p}{q}\\
&\quad\quad-\big(1-((1-\lambda) t_1+\lambda t_2)\big)\widetilde{V}_q(K,Q)^\frac{p}{q}
-((1-\lambda) t_1+\lambda t_2)\widetilde{V}_q(L,Q)^\frac{p}{q}\\
&\quad=\widetilde{V}_q\Big((1-\lambda)\cdot\big((1-t_1)\cdot K+_p t_1\cdot L\big)+_p\lambda \cdot\big((1-t_2)\cdot K+_pt_2\cdot L\big),Q\Big)^\frac{p}{q}\\
&\quad\quad-\big(1-((1-\lambda) t_1+\lambda t_2)\big)\widetilde{V}_q(K,Q)^\frac{p}{q}
-((1-\lambda) t_1+\lambda t_2)\widetilde{V}_q(L,Q)^\frac{p}{q}
\end{align*}
\begin{align*}
&\geq(1-\lambda)\widetilde{V}_q\big((1-t_1)\cdot K+_p t_1\cdot L,Q\big)^\frac{p}{q}+\lambda\widetilde{V}_q\big((1-t_2)\cdot K+_p t_2\cdot L,Q\big)^\frac{p}{q}\\
&\quad-\big(1-((1-\lambda) t_1+\lambda t_2)\big)\widetilde{V}_q(K,Q)^\frac{p}{q}
-((1-\lambda) t_1+\lambda t_2)\widetilde{V}_q(L,Q)^\frac{p}{q}\\
&=(1-\lambda)f(t_1)+\lambda f(t_2),
\end{align*}

and hence, $f(t)$ is concave for $t\in[0,1]$.
Together with \eqref{variationalfLp}, we have
\begin{align*}
\lim_{t\rightarrow0}\frac{f(t)-f(0)}{t}
&=\widetilde{V}_q(K,Q)^{\frac{p}{q}-1}\Big(\widetilde{V}_{p,q}(K,L,Q)-\widetilde{V}_q(K,Q)\Big)
+\widetilde{V}_q(K,Q)^{\frac{p}{q}}-\widetilde{V}_q(L,Q)^\frac{p}{q}\\
&=\widetilde{V}_q(K,Q)^{\frac{p}{q}-1}\widetilde{V}_{p,q}(K,L,Q)-\widetilde{V}_q(L,Q)^\frac{p}{q}\geq 0,
\end{align*}
that is,
\begin{align*}
\left(\frac{\widetilde{V}_{p,q}(K,L,Q)}{\widetilde{V}_q(K,Q)}\right)^\frac{1}{p}
\geq\left(\frac{\widetilde{V}_{q}(L,Q)}{\widetilde{V}_q(K,Q)}\right)^\frac{1}{q}.
\end{align*}
Moreover, $\lim_{t\rightarrow0}\frac{f(t)-f(0)}{t}=0$ only if $f(t)$ is identically 0 for $t\in[0,1]$. The latter implies the equality in \eqref{BMIp1}. Hence, the equality in \eqref{MIp} holds if and only if $K$ and $L$ are dilates.

Furthermore, we can prove \eqref{MIp} for $q<p<0$ by the arguments as in the proof of the case $q<0<p$ or $0<q<p$.

Therefore, we obtain the desired formula \eqref{MIp} for all $q<p$ with $p\neq0$ and $q\neq0$.
\end{proof}

If $Q=K$ in \eqref{MIp}, by $\widetilde{V}_{p,q}(K,L,K)=V_{p}(K,L)$ and $\widetilde{V}_q(K,K)=V(K)$, we obtain the following inequality:
\begin{cor}
If $K,L\in\mathcal{K}_o^n$, then, for $q<p$ with $p\neq0$ and $q\neq0$,
\begin{align*}\label{}
\left(\frac{V_{p}(K,L)}{V(K)}\right)^\frac{1}{p}
\geq\left(\frac{\widetilde{V}_{q}(L,K)}{V(K)}\right)^\frac{1}{q},
\end{align*}
with equality if and only if $K$ and $L$ are dilates.
\end{cor}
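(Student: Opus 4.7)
The final statement is an immediate corollary of the just-proved Minkowski-type inequality \eqref{MIp}, so my plan is essentially to carry out the substitution $Q = K$ and match the resulting expressions against the quantities appearing in the corollary. There is no real obstacle here; the only thing to verify is that the two identifications used in the reduction are correct and that the equality condition survives the substitution unchanged.

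First I would write down \eqref{MIp} verbatim with the choice $Q = K$, obtaining
\[
\left(\frac{\widetilde{V}_{p,q}(K,L,K)}{\widetilde{V}_q(K,K)}\right)^{\frac{1}{p}}
\;\geq\;
\left(\frac{\widetilde{V}_q(L,K)}{\widetilde{V}_q(K,K)}\right)^{\frac{1}{q}},
\]
which is valid for all $q < p$ with $p\neq 0$ and $q\neq 0$. Next I would invoke the identity \eqref{Lpduamv-mv}, namely $\widetilde{V}_{p,q}(K,L,K) = V_p(K,L)$, to rewrite the numerator on the left as the classical $L_p$ mixed volume $V_p(K,L)$.

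Then I need to check the denominator: by the integral representation of the $q$-th dual mixed volume,
\[
\widetilde{V}_q(K,K) \;=\; \frac{1}{n}\int_{S^{n-1}} \rho_K^{q}(u)\,\rho_K^{n-q}(u)\,du
\;=\; \frac{1}{n}\int_{S^{n-1}} \rho_K^{n}(u)\,du
\;=\; V(K),
\]
so both occurrences of $\widetilde{V}_q(K,K)$ collapse to $V(K)$. Substituting these identifications into the specialized form of \eqref{MIp} yields exactly the inequality stated in the corollary.

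Finally, the equality characterization transfers automatically: \eqref{MIp} holds with equality if and only if $K$ and $L$ are dilates, and this condition does not depend on the choice of $Q$, so the same necessary and sufficient condition applies to the corollary. Since every step is a direct rewriting via definitions already recorded in the preliminaries, no additional estimate or limiting argument is needed, and the proof is complete.
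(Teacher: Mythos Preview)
Your proposal is correct and follows exactly the paper's own approach: the paper derives the corollary by substituting $Q=K$ into \eqref{MIp} and invoking the identities $\widetilde{V}_{p,q}(K,L,K)=V_p(K,L)$ and $\widetilde{V}_q(K,K)=V(K)$, with the equality condition carried over unchanged. Your explicit verification of $\widetilde{V}_q(K,K)=V(K)$ via the integral representation is a fine addition, but otherwise the argument is identical.
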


The following uniqueness of the solution to the $L_p$ dual Minkowski problem for $q<p$ with $p\neq0$ and $q\neq0$ is obained.

\begin{thm}\label{uniquenesspq0}
Let $Q\in \mathcal{S}_{o}^n$ and $q<p$ with $p\neq0$ and $q\neq0$. If $K,L\in\mathcal{K}_o^n$
and
\begin{align*}
\widetilde{C}_{p,q}(K,Q,\cdot)=\widetilde{C}_{p,q}(L,Q,\cdot),
\end{align*}
then $K=L$.
\end{thm}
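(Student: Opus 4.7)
The strategy is to feed the hypothesis into the Minkowski-type inequality \eqref{MIp} twice, once with the roles of $K,L$ as stated and once with them swapped, and then to use the equality case together with the homogeneity \eqref{Lpdcmh} of $\widetilde{C}_{p,q}$ to rule out a nontrivial dilation.

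First I would translate the hypothesis $\widetilde{C}_{p,q}(K,Q,\cdot)=\widetilde{C}_{p,q}(L,Q,\cdot)$ into identities for dual mixed volumes via the definition \eqref{Lpdualmv}. Integrating $h_L^p$ against the common measure gives
\[
\widetilde{V}_{p,q}(K,L,Q)=\int_{S^{n-1}}h_L^p\,d\widetilde{C}_{p,q}(K,Q,\cdot)=\int_{S^{n-1}}h_L^p\,d\widetilde{C}_{p,q}(L,Q,\cdot)=\widetilde{V}_{p,q}(L,L,Q)=\widetilde{V}_q(L,Q),
\]
where the last equality uses \eqref{Lpdualmvdmv}. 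Symmetrically, integrating $h_K^p$ against the common measure yields $\widetilde{V}_{p,q}(L,K,Q)=\widetilde{V}_q(K,Q)$.

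Next I would plug these into \eqref{MIp}. Applied to the pair $(K,L)$ it says
\[
\left(\frac{\widetilde{V}_q(L,Q)}{\widetilde{V}_q(K,Q)}\right)^{\!1/p}\geq\left(\frac{\widetilde{V}_q(L,Q)}{\widetilde{V}_q(K,Q)}\right)^{\!1/q},
\]
while applied to the pair $(L,K)$ it gives the reciprocal inequality
\[
\left(\frac{\widetilde{V}_q(K,Q)}{\widetilde{V}_q(L,Q)}\right)^{\!1/p}\geq\left(\frac{\widetilde{V}_q(K,Q)}{\widetilde{V}_q(L,Q)}\right)^{\!1/q}.
\]
Setting $r=\widetilde{V}_q(L,Q)/\widetilde{V}_q(K,Q)>0$, these two inequalities combine to give $r^{1/p}=r^{1/q}$. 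Since $q<p$ with $p,q\neq 0$ we have $1/p-1/q\neq 0$, forcing $r=1$, so both inequalities are in fact equalities.

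The equality case of \eqref{MIp} then tells us that $K$ and $L$ are dilates, say $K=\lambda L$ for some $\lambda>0$. Finally I would invoke the scaling identity \eqref{Lpdcmh}, which gives $\widetilde{C}_{p,q}(K,Q,\cdot)=\lambda^{q-p}\widetilde{C}_{p,q}(L,Q,\cdot)$; comparing with the hypothesis and using $q-p\neq 0$ yields $\lambda=1$, hence $K=L$. I do not anticipate any real obstacle here, since once \eqref{MIp} and its equality case are in hand this is just bookkeeping; the only minor subtlety is remembering to invoke \eqref{MIp} in both orderings to rule out $r\neq 1$ rather than trying to read it off from a single application.
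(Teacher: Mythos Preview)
Your proposal is correct and follows essentially the same route as the paper: both apply \eqref{MIp} to $(K,L)$ and to $(L,K)$ after using the hypothesis to identify $\widetilde{V}_{p,q}(K,L,Q)=\widetilde{V}_q(L,Q)$ and $\widetilde{V}_{p,q}(L,K,Q)=\widetilde{V}_q(K,Q)$, and then combine the two resulting inequalities to force equality and hence that $K,L$ are dilates. The only cosmetic difference is the final step: the paper pins down the dilation factor from $\widetilde{V}_q(K,Q)=\widetilde{V}_q(L,Q)$ (using $q\neq 0$), whereas you use the scaling \eqref{Lpdcmh} of the measure (using $q-p\neq 0$); both are equally valid.
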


\begin{proof}
By \eqref{Lpdualmvdmv}, \eqref{Lpdualmv} and $\widetilde{C}_{p,q}(K,Q,\cdot)=\widetilde{C}_{p,q}(L,Q,\cdot)$,
\begin{align*}
\widetilde{V}_q(K,Q)&=\widetilde{V}_{p,q}(K,K,Q)=\widetilde{V}_{p,q}(L,K,Q),\\
\widetilde{V}_q(L,Q)&=\widetilde{V}_{p,q}(L,L,Q)=\widetilde{V}_{p,q}(K,L,Q).
\end{align*}
Together with \eqref{MIp}, we have
\begin{align*}
\left(\frac{\widetilde{V}_q(K,Q)}{\widetilde{V}_q(L,Q)}\right)^\frac{1}{p}
=\left(\frac{\widetilde{V}_{p,q}(L,K,Q)}{\widetilde{V}_q(L,Q)}\right)^\frac{1}{p}
\geq\left(\frac{\widetilde{V}_{q}(K,Q)}{\widetilde{V}_q(L,Q)}\right)^\frac{1}{q},
\end{align*}
that is,
\begin{align}\label{uniqueness1-1}
\widetilde{V}_q(K,Q)^{\frac{1}{p}-\frac{1}{q}}\geq \widetilde{V}_q(L,Q)^{\frac{1}{p}-\frac{1}{q}},
\end{align}
with equality if and only if $K$ and $L$ are dilates. By \eqref{MIp} again,
\begin{align*}
\left(\frac{\widetilde{V}_q(L,Q)}{\widetilde{V}_q(K,Q)}\right)^\frac{1}{p}
=\left(\frac{\widetilde{V}_{p,q}(K,L,Q)}{\widetilde{V}_q(K,Q)}\right)^\frac{1}{p}
\geq\left(\frac{\widetilde{V}_{q}(L,Q)}{\widetilde{V}_q(K,Q)}\right)^\frac{1}{q},
\end{align*}
that is,
\begin{align}\label{uniqueness1-2}
\widetilde{V}_q(L,Q)^{\frac{1}{p}-\frac{1}{q}}\geq \widetilde{V}_q(K,Q)^{\frac{1}{p}-\frac{1}{q}},
\end{align}
with equality if and only if $K$ and $L$ are dilates.

Combining \eqref{uniqueness1-1} with \eqref{uniqueness1-2}, we obtain
$$\widetilde{V}_q(K,Q)=\widetilde{V}_q(L,Q),$$
and $K, L$ are dilates. It follows that $K=L$.
\end{proof}

Next, we consider the uniqueness of the solution to the $L_p$ dual Minkowski problem for $p=0$ and $q<0$. Firstly, the following log-Brunn-Minkowski style inequality is obtained.

\begin{lem}
Let $Q\in \mathcal{S}_{o}^n$ and $t\in[0,1]$. If $K,L\in\mathcal{K}_o^n$ and $q<0$, then
\begin{align}\label{BMIp=0}
\widetilde{V}_q\big((1-t)\cdot K+_0t\cdot L,Q\big)\leq \widetilde{V}_q(K,Q)^{1-t}\widetilde{V}_q(L,Q)^t,
\end{align}
with equality for any $t\in(0,1)$ if and only if $K$ and $L$ are dilates.
\end{lem}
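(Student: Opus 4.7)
The plan is to mirror the proof of \eqref{BMIp1}--\eqref{BMIp2} but with H\"older's inequality replacing Minkowski's inequality for integrals, and with careful attention to the sign of $q$. The proof will proceed in three main stages: reducing to the radial $L_0$ combination via Lemma \ref{inclusion}, applying H\"older to the resulting integral, and analyzing equality.

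First I would use Lemma \ref{inclusion} with $p=0$ to get
\begin{align*}
(1-t)\cdot K\tilde{+}_0t\cdot L\subseteq(1-t)\cdot  K+_0t\cdot L,
\end{align*}
which by monotonicity of the radial function yields $\rho_{(1-t)\cdot K\tilde{+}_0t\cdot L}\le\rho_{(1-t)\cdot K+_0t\cdot L}$ on $S^{n-1}$. Since $q<0$, raising to the $q$-th power reverses the inequality, and multiplying by the positive weight $\rho_Q^{n-q}$ and integrating gives
\begin{align*}
\widetilde{V}_q\bigl((1-t)\cdot K+_0t\cdot L,Q\bigr)\le\widetilde{V}_q\bigl((1-t)\cdot K\tilde{+}_0t\cdot L,Q\bigr).
\end{align*}

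Next, by \eqref{radialc2} the radial $L_0$ combination satisfies $\rho_{(1-t)\cdot K\tilde{+}_0t\cdot L}=\rho_K^{1-t}\rho_L^{t}$, so
\begin{align*}
\widetilde{V}_q\bigl((1-t)\cdot K\tilde{+}_0t\cdot L,Q\bigr)
=\frac{1}{n}\int_{S^{n-1}}\bigl(\rho_K^q(u)\rho_Q^{n-q}(u)\bigr)^{1-t}\bigl(\rho_L^q(u)\rho_Q^{n-q}(u)\bigr)^{t}\,du.
\end{align*}
Now I would apply H\"older's inequality with conjugate exponents $1/(1-t)$ and $1/t$ to the nonnegative functions $f=\rho_K^q\rho_Q^{n-q}$ and $g=\rho_L^q\rho_Q^{n-q}$, obtaining
\begin{align*}
\frac{1}{n}\int_{S^{n-1}}f^{1-t}g^{t}\,du
\le\left(\frac{1}{n}\int_{S^{n-1}}f\,du\right)^{1-t}\left(\frac{1}{n}\int_{S^{n-1}}g\,du\right)^{t}
=\widetilde{V}_q(K,Q)^{1-t}\widetilde{V}_q(L,Q)^{t}.
\end{align*}
Chaining the two displays yields \eqref{BMIp=0}.

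For the equality characterization, suppose equality holds in \eqref{BMIp=0} for some $t\in(0,1)$. Then equality must hold in the H\"older step, which forces $\rho_K^q\rho_Q^{n-q}$ and $\rho_L^q\rho_Q^{n-q}$ to be proportional on $S^{n-1}$; since $\rho_Q>0$ and $q\neq 0$, this gives $\rho_K=c\,\rho_L$ for some constant $c>0$, i.e., $K$ and $L$ are dilates. Conversely, if $K$ and $L$ are dilates, then Lemma \ref{inclusion} (the case $p=0\in\mathbb{R}\setminus[1,n)$) gives equality in the set inclusion, and H\"older's inequality becomes an equality since $f$ and $g$ are then proportional; hence equality holds in \eqref{BMIp=0}. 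The only subtlety is the sign reversal when integrating $\rho^q$ with $q<0$, which is the reason the inequality in \eqref{BMIp=0} points the opposite way from \eqref{BMIp1}; once that is tracked carefully no genuine obstacle remains.
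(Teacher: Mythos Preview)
Your proposal is correct and follows essentially the same approach as the paper: first pass from the $L_0$ Minkowski combination to the radial $L_0$ combination via Lemma \ref{inclusion} (using $q<0$ to reverse the inequality after raising radial functions to the $q$-th power), then apply H\"older's inequality with exponents $1/(1-t)$ and $1/t$, and handle the equality case by the equality condition in H\"older together with the equality case of Lemma \ref{inclusion}. The paper's proof is organized identically.
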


\begin{proof}
For $t\in(0,1)$,
by Lemma \ref{inclusion} and the H\"{o}lder's inequality for integrals, we have
\begin{align}
&\widetilde{V}_q\big((1-t)\cdot K+_0t\cdot L,Q\big)\\
&\quad=\frac{1}{n}\int_{S^{n-1}}\rho^{q}_{(1-t)\cdot K+_0t\cdot L}(u)\rho^{n-q}_{Q}(u)du\nonumber\\
&\quad\leq\frac{1}{n}\int_{S^{n-1}}\rho^{q}_{(1-t)\cdot K\tilde{+}_0t\cdot L}(u)\rho^{n-q}_{Q}(u)du\label{BMIp=0-1}\\
&\quad=\frac{1}{n}\int_{S^{n-1}}(\rho_{K}^q(u))^{1-t}(\rho_{L}^q(u))^t\rho^{n-q}_{Q}(u)du\nonumber\\
&\quad\leq\left(\frac{1}{n}\int_{S^{n-1}}\rho_{K}^{q}(u)\rho^{n-q}_{Q}(u)du\right)^{1-t}
\left(\frac{1}{n}\int_{S^{n-1}}\rho_{L}^{q}(u)\rho^{n-q}_{Q}(u)du\right)^t\label{BMIp=0-2}\\
&\quad=\widetilde{V}_q(K,Q)^{1-t}\widetilde{V}_q(L,Q)^t.\nonumber
\end{align}

For any $t\in(0,1)$, if the equality in \eqref{BMIp=0} holds, then the equality in \eqref{BMIp=0-2} holds, that is, we have $K$ and $L$ are dilates by the equality condition of the H\"{o}lder's inequality for integrals.
If $K$ and $L$ are dilates, then the both equalities in \eqref{BMIp=0-1} and
\eqref{BMIp=0-2} hold by Lemma \ref{inclusion} and the H\"{o}lder's inequality for integrals.
Therefore,
the equality for any $t\in(0,1)$ holds in \eqref{BMIp=0} if and only if $K$ and $L$ are dilates.
\end{proof}

The log-Minkowski style inequality for $q<0$ is established as follows:

\begin{thm}
Let $Q\in \mathcal{S}_{o}^n$ and $q<0$. If $K,L\in\mathcal{K}_o^n$, then
\begin{align}\label{MIp=0}
\frac{1}{\widetilde{V}_q(K,Q)}\int_{S^{n-1}}\log\frac{h_{L}(v)}{h_K(v)}d\widetilde{C}_q(K,Q,v)
\geq\frac{1}{q}\log\frac{\widetilde{V}_q(L,Q)}{\widetilde{V}_q(K,Q)},
\end{align}
with equality if and only if $K$ and $L$ are dilates.
\end{thm}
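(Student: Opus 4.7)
My plan is to mirror the strategy used for the Minkowski-style inequality \eqref{MIp}, replacing the $L_p$ Minkowski combination by the $L_0$ combination, the dual Brunn-Minkowski inequality by the log-Brunn-Minkowski-style inequality \eqref{BMIp=0}, and the first variational formula \eqref{variationalfLp} by \eqref{variationalfp=0}. Define
\begin{align*}
f(t)=\log\widetilde{V}_q\big((1-t)\cdot K+_0 t\cdot L,Q\big)-(1-t)\log\widetilde{V}_q(K,Q)-t\log\widetilde{V}_q(L,Q),\quad t\in[0,1].
\end{align*}
By taking logarithms, \eqref{BMIp=0} gives $f(t)\leq 0$ for all $t\in[0,1]$, and clearly $f(0)=f(1)=0$.

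The next step is to show that $f$ is \emph{convex} on $[0,1]$. Given $t_1,t_2\in[0,1]$ and $\lambda\in(0,1)$, set $M_i=(1-t_i)\cdot K+_0 t_i\cdot L$ and $\tau=(1-\lambda)t_1+\lambda t_2$. Using the Wulff-shape description \eqref{LpMinkowskic2} together with $h_{M_i}\leq h_K^{1-t_i}h_L^{t_i}$, a direct check yields the inclusion
\begin{align*}
(1-\lambda)\cdot M_1+_0\lambda\cdot M_2\subseteq (1-\tau)\cdot K+_0\tau\cdot L.
\end{align*}
Because $q<0$, the monotonicity $\rho_A\leq\rho_B\Rightarrow \widetilde{V}_q(A,Q)\geq \widetilde{V}_q(B,Q)$ reverses the inclusion at the level of dual volumes, and then an application of \eqref{BMIp=0} to $M_1$ and $M_2$ gives
\begin{align*}
\log\widetilde{V}_q\big((1-\tau)\cdot K+_0\tau\cdot L,Q\big)
\leq (1-\lambda)\log\widetilde{V}_q(M_1,Q)+\lambda\log\widetilde{V}_q(M_2,Q),
\end{align*}
which is exactly the convexity inequality $f(\tau)\leq (1-\lambda)f(t_1)+\lambda f(t_2)$.

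With convexity plus $f\leq 0$ and $f(0)=0$ in hand, the one-sided derivative satisfies $f'(0^+)\leq 0$. To compute it, apply the chain rule to $\log$ and use the variational formula \eqref{variationalfp=0}:
\begin{align*}
f'(0^+)=\frac{q}{\widetilde{V}_q(K,Q)}\int_{S^{n-1}}\log\frac{h_L(v)}{h_K(v)}\,d\widetilde{C}_q(K,Q,v)-\log\frac{\widetilde{V}_q(L,Q)}{\widetilde{V}_q(K,Q)}.
\end{align*}
The inequality $f'(0^+)\leq 0$, combined with $q<0$ (which flips the sign when dividing), yields exactly \eqref{MIp=0}.

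For the equality characterization, if equality holds in \eqref{MIp=0} then $f'(0^+)=0$; convexity together with the tangent-line inequality $f(t)\geq f(0)+f'(0^+)t=0$ forces $f\equiv 0$ on $[0,1]$, so equality holds in \eqref{BMIp=0} for every $t\in(0,1)$, and the equality clause of \eqref{BMIp=0} gives that $K$ and $L$ are dilates. The converse is immediate from the definitions. The main obstacle will be justifying the convexity step cleanly: one must carefully track that the $L_0$-combination of Wulff shapes is only contained in, not equal to, the Wulff shape of the product, and invoke the orientation-reversing monotonicity of $\widetilde{V}_q$ for $q<0$ precisely at this point to land the inequality in the correct direction.
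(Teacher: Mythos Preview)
Your proposal is correct and follows essentially the same route as the paper: define the auxiliary function, use \eqref{BMIp=0} for nonpositivity and convexity, compute the one-sided derivative via \eqref{variationalfp=0}, and extract the equality case from $f\equiv 0$. In fact you are more careful than the paper at the convexity step: the paper simply writes ``by \eqref{BMIp=0} again'' and asserts $g((1-\lambda)t_1+\lambda t_2)\leq (1-\lambda)g(t_1)+\lambda g(t_2)$, whereas you correctly spell out that one first needs the Wulff-shape inclusion $(1-\lambda)\cdot M_1+_0\lambda\cdot M_2\subseteq (1-\tau)\cdot K+_0\tau\cdot L$ (coming from $h_{M_i}\le h_K^{1-t_i}h_L^{t_i}$) together with the reversed monotonicity of $\widetilde V_q(\cdot,Q)$ for $q<0$, before applying \eqref{BMIp=0} to $M_1,M_2$.
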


\begin{proof}
By \eqref{BMIp=0}, the function $g$ defined by
\begin{align*}
g(t)=\log\widetilde{V}_q\big((1-t)\cdot K+_0t\cdot L,Q\big)-(1-t)\log\widetilde{V}_q(K,Q)-t\log\widetilde{V}_q(L,Q)
\end{align*}
is nonpositive for $t\in[0,1]$ and satisfies $g(0)=g(1)=0$. Moreover, by \eqref{BMIp=0} again, we obtain
\begin{align*}
g((1-\lambda) t_1+\lambda t_2)\leq (1-\lambda)g(t_1)+\lambda g(t_2),
\end{align*}
for $t_1,t_2\in [0,1]$ and $\lambda\in(0,1)$.
Hence, $g(t)$ is convex for $t\in[0,1]$.
Together with \eqref{variationalfp=0}, we have
\begin{align*}
\lim_{t\rightarrow0}\frac{g(t)-g(0)}{t}
&=\frac{q}{\widetilde{V}_q(K,Q)}\int_{S^{n-1}}\log\frac{h_{L}(v)}{h_K(v)}d\widetilde{C}_q(K,Q,v)
+\log\widetilde{V}_q(K,Q)-\log\widetilde{V}_q(L,Q)\\
&=\frac{q}{\widetilde{V}_q(K,Q)}\int_{S^{n-1}}\log\frac{h_{L}(v)}{h_K(v)}d\widetilde{C}_q(K,Q,v)
-\log\frac{\widetilde{V}_q(L,Q)}{\widetilde{V}_q(K,Q)}\leq 0,
\end{align*}
that is, the inequality \eqref{MIp=0} is established.

Furthermore, $\lim_{t\rightarrow0}\frac{g(t)-g(0)}{t}=0$ only if $g(t)$ is identically 0 for $t\in[0,1]$. The latter implies the equality in \eqref{BMIp=0}. Hence, the equality in \eqref{MIp=0} holds if and only if $K$ and $L$ are dilates.
\end{proof}

\begin{remark}
When $Q=B$, \eqref{MIp=0} is obtained in \cite{WFZ}, but their methods are different. In \cite{WFZ}, Wang, Fang and Zhou mainly applied the existence and uniqueness of dual Minkowski problem to prove \eqref{MIp=0} for $Q=B$. In this paper, we prove \eqref{MIp=0} by the log-Brunn-Minkowski style inequality \eqref{BMIp=0} and the variational formula \eqref{variationalfp=0}.
\end{remark}

The following theorem establishes the uniqueness of the solution to the $L_p$ dual Minkowski problem for $p=0$ and $q<0$.

\begin{thm}\label{uniquenessp=0}
Let $Q\in \mathcal{S}_{o}^n$ and $q<0$. If $K,L\in\mathcal{K}_o^n$
and
\begin{align*}
\widetilde{C}_{q}(K,Q,\cdot)=\widetilde{C}_{q}(L,Q,\cdot),
\end{align*}
then $K=L$.
\end{thm}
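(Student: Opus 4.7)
The plan is to imitate the structure of the proof of Theorem \ref{uniquenesspq0}, but using the log-Minkowski style inequality \eqref{MIp=0} in place of \eqref{MIp}, since here $p=0$. The first observation is that taking the total mass of the measure equation $\widetilde{C}_q(K,Q,\cdot)=\widetilde{C}_q(L,Q,\cdot)$ on $S^{n-1}$ gives, via $\widetilde{C}_q(K,Q,S^{n-1})=\widetilde{V}_q(K,Q)$ (stated right after \eqref{Lpdualcmp=0}), the identity $\widetilde{V}_q(K,Q)=\widetilde{V}_q(L,Q)$. Consequently, the right-hand side of \eqref{MIp=0} vanishes for both orderings of the pair $(K,L)$.

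Next I would apply \eqref{MIp=0} in both directions. Writing it first for $(K,L)$ and then for $(L,K)$ yields
\begin{align*}
\frac{1}{\widetilde{V}_q(K,Q)}\int_{S^{n-1}}\log\frac{h_L(v)}{h_K(v)}\,d\widetilde{C}_q(K,Q,v)&\geq 0,\\
\frac{1}{\widetilde{V}_q(L,Q)}\int_{S^{n-1}}\log\frac{h_K(v)}{h_L(v)}\,d\widetilde{C}_q(L,Q,v)&\geq 0.
\end{align*}
Because $\widetilde{V}_q(K,Q)=\widetilde{V}_q(L,Q)$ and $\widetilde{C}_q(K,Q,\cdot)=\widetilde{C}_q(L,Q,\cdot)$, the two left-hand sides are exact negatives of each other, so their sum is $0$. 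Hence each is separately zero, which is the equality case of \eqref{MIp=0}, forcing $K$ and $L$ to be dilates.

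Finally, write $K=\lambda L$ for some $\lambda>0$. Then $\rho_K=\lambda\rho_L$, so $\widetilde{V}_q(K,Q)=\lambda^q\widetilde{V}_q(L,Q)$. Combined with the already-established $\widetilde{V}_q(K,Q)=\widetilde{V}_q(L,Q)$ and $q\neq 0$, this gives $\lambda=1$ and therefore $K=L$. No step here is a genuine obstacle: the whole argument is a two-sided application of \eqref{MIp=0} together with the trivial total-mass identity, exactly in the spirit of Theorem \ref{uniquenesspq0}. The only point that needs verification—and is worth spelling out—is that the dilation factor collapses to $1$, which uses $q\neq 0$, a hypothesis ensured by $q<0$.
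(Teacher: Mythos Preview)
Your proposal is correct and follows essentially the same approach as the paper: both arguments use the total-mass identity $\widetilde{V}_q(K,Q)=\widetilde{V}_q(L,Q)$, apply the log-Minkowski style inequality \eqref{MIp=0} in both directions, exploit the equality of measures to see that the two resulting integrals are negatives of each other (hence both vanish), invoke the equality case to conclude $K$ and $L$ are dilates, and then use $q\neq 0$ to force the dilation factor to be $1$.
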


\begin{proof}
Since $\widetilde{C}_{q}(K,Q,\cdot)=\widetilde{C}_{q}(L,Q,\cdot)$, then
\begin{align}\label{uniquenessp=0-1}
\widetilde{V}_q(K,Q)=\widetilde{C}_{q}(K,Q,S^{n-1})
=\widetilde{C}_{q}(L,Q,S^{n-1})=\widetilde{V}_q(L,Q),
\end{align}
and
\begin{align}\label{uniquenessp=0-2}
\int_{S^{n-1}}\log\frac{h_{L}(v)}{h_K(v)}d\widetilde{C}_q(K,Q,v)=\int_{S^{n-1}}\log\frac{h_{L}(v)}{h_K(v)}d\widetilde{C}_q(L,Q,v).
\end{align}

Combining \eqref{MIp=0} with \eqref{uniquenessp=0-1}, we have
\begin{align}\label{uniquenessp=0-3}
\int_{S^{n-1}}\log\frac{h_{L}(v)}{h_K(v)}d\widetilde{C}_q(K,Q,v)\geq 0,
\end{align}
with equality if and only if $K$ and $L$ are dilates.
By \eqref{MIp=0} and \eqref{uniquenessp=0-1} again,
\begin{align*}
\int_{S^{n-1}}\log\frac{h_{K}(v)}{h_L(v)}d\widetilde{C}_q(L,Q,v)\geq 0,
\end{align*}
with equality if and only if $K$ and $L$ are dilates. Together with \eqref{uniquenessp=0-2}, we obtain
\begin{align}\label{uniquenessp=0-4}
\int_{S^{n-1}}\log\frac{h_{L}(v)}{h_K(v)}d\widetilde{C}_q(K,Q,v)
=\int_{S^{n-1}}\log\frac{h_{L}(v)}{h_K(v)}d\widetilde{C}_q(L,Q,v)
\leq 0,
\end{align}
with equality if and only if $K$ and $L$ are dilates.

From \eqref{uniquenessp=0-3} and \eqref{uniquenessp=0-4},
\begin{align*}
\int_{S^{n-1}}\log\frac{h_{L}(v)}{h_K(v)}d\widetilde{C}_q(K,Q,v)=0.
\end{align*}
It follows that $K$ and $L$ are dilates from the equality condition of \eqref{uniquenessp=0-3}.
Therefore, by \eqref{uniquenessp=0-1}, we obtain $K=L$.
\end{proof}

\begin{remark}
The case $Q=B$ of Theorem \ref{uniquenessp=0} is the uniqueness of the solutin to the dual Minkowski problem for negative indices established by Zhao \cite{Zhao1}. Our method is different from Zhao's method.
Thus, we give a new proof of the uniqueness of the solutin to the dual Minkowski problem for negative indices.
\end{remark}

Next, we will consider the uniqueness of the solution to the $L_p$ dual Minkowski problem for $p>0$ and $q=0$.

\begin{lem}
Let $Q\in \mathcal{S}_{o}^n$ and $t\in[0,1]$. If $K,L\in\mathcal{K}_o^n$ and $p>0$, then
\begin{align}\label{BMIq=01}
\exp\left(\frac{\widetilde{E}\big((1-t)\cdot K+_pt\cdot L,Q\big)}{V(Q)}\right)
\geq \left(\exp\Big(\frac{\widetilde{E}(K,Q)}{V(Q)}\Big)\right)^{1-t}\left(\exp\Big(\frac{\widetilde{E}(L,Q)}{V(Q)}\Big)\right)^t,
\end{align}
with equality for any $t\in(0,1)$ if and only if $K=L$.
\end{lem}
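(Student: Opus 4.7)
The plan is to mirror the proofs of \eqref{BMIp1} and \eqref{BMIp=0}, in two steps: first replace the $L_p$ Minkowski combination by the radial $p$-combination using Lemma \ref{inclusion}, and then bound the dual mixed entropy by a pointwise weighted AM-GM estimate on radial functions.

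First, by Lemma \ref{inclusion}, $(1-t)\cdot K\tilde{+}_pt\cdot L \subseteq (1-t)\cdot K+_pt\cdot L$, so $\rho_{(1-t)\cdot K\tilde{+}_pt\cdot L}\leq \rho_{(1-t)\cdot K+_pt\cdot L}$. Since the functional $\widetilde{E}(\,\cdot\,,Q)$ is monotone increasing in the radial function of its first argument (the only dependence of the integrand $\log(\rho_{(\cdot)}/\rho_Q)\rho_Q^n$ on the first argument is through the increasing function $\log$), this gives
\begin{equation*}
\widetilde{E}\bigl((1-t)\cdot K+_pt\cdot L,\,Q\bigr) \,\geq\, \widetilde{E}\bigl((1-t)\cdot K\tilde{+}_pt\cdot L,\,Q\bigr).
\end{equation*}
Next, for $p>0$ the weighted AM-GM inequality applied to $\rho_K^p(u),\rho_L^p(u)$ (and raising to the $1/p$-th power, which preserves the inequality) yields
\begin{equation*}
\rho_K(u)^{1-t}\rho_L(u)^t \,\leq\, \bigl((1-t)\rho_K^p(u)+t\rho_L^p(u)\bigr)^{1/p} \,=\, \rho_{(1-t)\cdot K\tilde{+}_pt\cdot L}(u)
\end{equation*}
for every $u\in S^{n-1}$. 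Taking logarithms, multiplying by $\rho_Q(u)^n$, integrating over $S^{n-1}$, and dividing by $nV(Q) = \int_{S^{n-1}}\rho_Q^n\,du$, I can cancel the common term $\tfrac{1}{nV(Q)}\int_{S^{n-1}}\log\rho_Q\cdot\rho_Q^n\,du$ on both sides to obtain
\begin{equation*}
\frac{\widetilde{E}\bigl((1-t)\cdot K\tilde{+}_pt\cdot L,\,Q\bigr)}{V(Q)} \,\geq\, (1-t)\frac{\widetilde{E}(K,Q)}{V(Q)} + t\frac{\widetilde{E}(L,Q)}{V(Q)}.
\end{equation*}
Combining with the preceding inequality and exponentiating then gives \eqref{BMIq=01}.

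For the equality case, if $K=L$ both sides of \eqref{BMIq=01} trivially equal $\exp(\widetilde{E}(K,Q)/V(Q))$. Conversely, equality in \eqref{BMIq=01} for some $t\in(0,1)$ forces equality in the integrated pointwise AM-GM step; since this estimate is weighted by the strictly positive continuous function $\rho_Q^n$, the gap function must vanish identically on $S^{n-1}$, which forces $\rho_K(u)=\rho_L(u)$ for every $u$, i.e., $K=L$.

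The main subtlety I anticipate is the equality analysis. The inclusion step from Lemma \ref{inclusion} is only an "iff dilates" equality for $p\in[1,n)$ and just an "if dilates" statement for $p\in\mathbb{R}\setminus[1,n)$; neither of these alone would give the sharper conclusion $K=L$. It is the AM-GM step, rather than the inclusion step, that upgrades the equality condition to pointwise equality of radial functions, and hence to $K=L$. Handling this asymmetry cleanly, and verifying that the measure $\rho_Q^n\,du$ is genuinely strictly positive so that equality of integrals propagates to pointwise equality (via continuity of the radial functions), is the main point requiring care.
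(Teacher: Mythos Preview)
Your proof is correct and follows essentially the same route as the paper's: first pass from the $L_p$ Minkowski combination to the radial $p$-combination via Lemma \ref{inclusion} and the monotonicity of $\widetilde{E}(\cdot,Q)$ in the radial function, then use the pointwise inequality $(1-t)\log\rho_K^p+t\log\rho_L^p\le \log\bigl((1-t)\rho_K^p+t\rho_L^p\bigr)$ (the paper phrases this as concavity of $\log$, you phrase it as weighted AM--GM; they are the same), integrate against $\rho_Q^n\,du$, and exponentiate. Your handling of the equality case is also the same in substance: equality in the strictly-concave-$\log$ / AM--GM step, weighted by the strictly positive density $\rho_Q^n$, forces $\rho_K\equiv\rho_L$ and hence $K=L$, which is exactly what the paper uses (and, as you note, is what upgrades the conclusion beyond the weaker ``dilates'' condition coming from Lemma \ref{inclusion}).
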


\begin{proof}
For $t\in(0,1)$,
by Lemma \eqref{inclusion} and the concavity of the function $\log$, we have
\begin{align}
&\widetilde{E}\big((1-t)\cdot K+_pt\cdot L,Q\big)\\
&\quad=\frac{1}{n}\int_{S^{n-1}}\log\left(\frac{\rho_{(1-t)\cdot K+_pt\cdot L}(u)}{\rho_{Q}(u)}\right)\rho^{n}_{Q}(u)du\nonumber\\
&\quad\geq\frac{1}{n}\int_{S^{n-1}}\log\left(\frac{\rho_{(1-t)\cdot K\tilde{+}_pt\cdot L}(u)}{\rho_{Q}(u)}\right)\rho^{n}_{Q}(u)du\label{BMIq=01-1}\\
&\quad=\frac{1}{np}\int_{S^{n-1}}\log\left(\frac{(1-t)\rho_{K}^p(u)+t\rho_{L}^p(u)}{\rho_{Q}^p(u)}\right)\rho^{n}_{Q}(u)du\nonumber\\
&\quad\geq\frac{1}{np}\int_{S^{n-1}}\left((1-t)\log\left(\frac{\rho_{K}^p(u)}{\rho_{Q}^p(u)}\right)
+t\log\left(\frac{\rho_{L}^p(u)}{\rho_{Q}^p(u)}\right) \right)\rho^{n}_{Q}(u)du\label{BMIq=01-2}\\
&\quad=\frac{1}{n}\int_{S^{n-1}}\left((1-t)\log\left(\frac{\rho_{K}(u)}{\rho_{Q}(u)}\right)+t\log\left(\frac{\rho_{L}(u)}{\rho_{Q}(u)}\right) \right)\rho^{n}_{Q}(u)du\nonumber\\
&\quad=(1-t)\widetilde{E}(K,Q)+t\widetilde{E}(L,Q).\label{BMIq=01-3}
\end{align}
For $t\in(0,1)$, by Lemma \eqref{inclusion} and the fact that the function $\log$ is strictly concave,
we obtain the both equalities in \eqref{BMIq=01-1} and \eqref{BMIq=01-2} hold if and only if $K=L$.
It follows that
\begin{align*}
\exp\left(\frac{\widetilde{E}\big((1-t)\cdot K+_pt\cdot L,Q\big)}{V(Q)}\right)
&\geq \exp\left(\frac{(1-t)\widetilde{E}(K,Q)+t\widetilde{E}(L,Q)}{V(Q)}\right)\\
&=\left(\exp\Big(\frac{\widetilde{E}(K,Q)}{V(Q)}\Big)\right)^{1-t}\left(\exp\Big(\frac{\widetilde{E}(L,Q)}{V(Q)}\Big)\right)^t,
\end{align*}
with equality for any $t\in(0,1)$ if and only if $K=L$.
\end{proof}

\begin{remark}
From \eqref{BMIq=01-3} and the arithmetric-geometric inequality, we have the following result:

Let $Q\in \mathcal{S}_{o}^n$ and $t\in[0,1]$. If $K,L\in\mathcal{K}_o^n$ and $p>0$, then
\begin{align*}
\widetilde{E}\big((1-t)\cdot K+_pt\cdot L,Q\big)
\geq\widetilde{E}(K,Q)^{1-t}\widetilde{E}(L,Q)^t,
\end{align*}
with equality for any $t\in(0,1)$ if and only if $K=L$.
\end{remark}

\begin{lem}
Let $Q\in \mathcal{S}_{o}^n$ and $p>0$. If $K,L\in\mathcal{K}_o^n$, then
\begin{align}\label{BMIq=02}
\left(\exp\Big(\frac{\widetilde{E}\big( K+_p L,Q\big)}{V(Q)}\Big)\right)^p
\geq \left(\exp\Big(\frac{\widetilde{E}(K,Q)}{V(Q)}\Big)\right)^{p}+\left(\exp\Big(\frac{\widetilde{E}(L,Q)}{V(Q)}\Big)\right)^p,
\end{align}
with equality if and only if $K$ and $L$ are dilates.
\end{lem}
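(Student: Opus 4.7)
The plan is to deduce \eqref{BMIq=02} from the weighted concavity inequality \eqref{BMIq=01} by rescaling and then optimizing the free parameter. The first ingredient is the $1$-homogeneity of the functional $K \mapsto \exp(\widetilde{E}(K,Q)/V(Q))$: since $\rho_{\lambda K} = \lambda \rho_K$ and $V(Q) = \frac{1}{n}\int_{S^{n-1}} \rho_Q^n\,du$, one has
\[
\widetilde{E}(\lambda K,Q) = \widetilde{E}(K,Q) + V(Q)\log\lambda, \qquad \lambda > 0,
\]
so that $\exp(\widetilde{E}(\lambda K,Q)/V(Q)) = \lambda \exp(\widetilde{E}(K,Q)/V(Q))$.

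Next, for $t \in (0,1)$ I would set $\lambda = (1-t)^{-1/p}$ and $\mu = t^{-1/p}$. Then the definition of the $L_p$ Minkowski combination through support functions gives
\[
h_{(1-t)\cdot (\lambda K) +_p t\cdot(\mu L)}^p = (1-t)\lambda^p h_K^p + t\mu^p h_L^p = h_K^p + h_L^p = h_{K+_p L}^p,
\]
so $(1-t)\cdot(\lambda K)+_p t\cdot(\mu L) = K +_p L$. Applying \eqref{BMIq=01} to the pair $(\lambda K,\mu L)$, invoking homogeneity, and raising to the $p$th power yields, with $A := \exp(\widetilde{E}(K,Q)/V(Q))^p$ and $B := \exp(\widetilde{E}(L,Q)/V(Q))^p$,
\[
\exp\!\left(\frac{\widetilde{E}(K +_p L,Q)}{V(Q)}\right)^{\!p} \;\geq\; \Bigl(\tfrac{A}{1-t}\Bigr)^{1-t}\Bigl(\tfrac{B}{t}\Bigr)^{t}.
\]
The key observation is that this lower bound, viewed as a function of $t$, attains its maximum at the specific value $t^\star = B/(A+B) \in (0,1)$; at $t = t^\star$ a short algebraic simplification yields exactly $A+B$ (this is the equality case of weighted AM--GM $x_1^{1-t}x_2^{t} \leq (1-t)x_1 + tx_2$ with $x_1 = A/(1-t)$, $x_2 = B/t$). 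Substituting $t = t^\star$ gives \eqref{BMIq=02}.

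For the equality case I would argue as follows. If equality holds in \eqref{BMIq=02}, then it must hold in the inequality obtained from \eqref{BMIq=01} applied to $(\lambda K,\mu L)$ at the admissible value $t = t^\star \in (0,1)$; by the equality clause of \eqref{BMIq=01} this forces $\lambda K = \mu L$, so $K$ and $L$ are dilates. Conversely, if $L = cK$ with $c > 0$, then $K +_p L = (1+c^p)^{1/p}K$ and both sides of \eqref{BMIq=02} equal $(1+c^p)\exp(\widetilde{E}(K,Q)/V(Q))^p$ by homogeneity. The only genuinely delicate point is identifying the optimal weight $t^\star$; once that choice is pinned down, the argument reduces to routine bookkeeping using the previous lemma.
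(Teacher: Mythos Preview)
Your argument is correct and is essentially the same as the paper's: both proofs apply \eqref{BMIq=01} together with the $1$-homogeneity of $K\mapsto\exp(\widetilde E(K,Q)/V(Q))$, and both ultimately use the specific weight $t^\star=B/(A+B)$. The only cosmetic difference is that the paper first normalises to $\bar K=A^{-1/p}K$, $\bar L=B^{-1/p}L$ (so that the right-hand side of \eqref{BMIq=01} becomes $1$) and then plugs in $t^\star$ directly, whereas you keep $t$ free, rescale by $\lambda=(1-t)^{-1/p}$, $\mu=t^{-1/p}$, and recover $t^\star$ by optimisation; the two computations are reparametrisations of one another.
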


\begin{proof}
Let $\bar{K}=\left(\exp\Big(\frac{\widetilde{E}(K,Q)}{V(Q)}\Big)\right)^{-1}K$ and
$\bar{L}=\left(\exp\Big(\frac{\widetilde{E}(L,Q)}{V(Q)}\Big)\right)^{-1}L$,
then $\exp\Big(\frac{\widetilde{E}(\bar{K},Q)}{V(Q)}\Big)=\exp\Big(\frac{\widetilde{E}(\bar{L},Q)}{V(Q)}\Big)=1$.
By \eqref{BMIq=01}, we have
\begin{align}\label{BMIq=02-1}
\exp\left(\frac{\widetilde{E}\big((1-t)\cdot \bar{K}+_pt\cdot \bar{L},Q\big)}{V(Q)}\right)
\geq 1,
\end{align}
with equality for some $t\in(0,1)$ if and only if $\bar{K}=\bar{L}$, that is, $K$ and $L$ are dilates.

Let $t=\left(\exp\Big(\frac{\widetilde{E}(L,Q)}{V(Q)}\Big)\right)^p\Big/
\left(\Big(\exp\Big(\frac{\widetilde{E}(K,Q)}{V(Q)}\Big)\Big)^{p}
+\Big(\exp\Big(\frac{\widetilde{E}(L,Q)}{V(Q)}\Big)\Big)^p\right)$, then
\begin{align*}
(1-t)\cdot \bar{K}+_pt\cdot \bar{L}
=\frac{1}{\left(\Big(\exp\Big(\frac{\widetilde{E}(K,Q)}{V(Q)}\Big)\Big)^{p}
+\Big(\exp\Big(\frac{\widetilde{E}(L,Q)}{V(Q)}\Big)\Big)^p\right)^\frac{1}{p}}
K+_p L.
\end{align*}
Together with \eqref{BMIq=02-1}, we obtain the desired formila \eqref{BMIq=02}.
The equality holds in \eqref{BMIq=02} if and only if the equality holds in \eqref{BMIq=02-1}, that is, $K$ and $L$ are dilates.
\end{proof}

The Minkowski style inequality for $p>0$ is established as follows:

\begin{thm}
Let $Q\in\mathcal{S}_{o}^n$ and $p>0$.
If $K,L\in \mathcal{K}_{o}^n$, then
\begin{align}\label{MIq=0}
\frac{1}{nV(Q)}\int_{S^{n-1}}\log\bigg(\frac{\rho_{L}(u)}{\rho_K(u)}\bigg)\rho^n_{Q}(u)du
\leq\frac{1}{p}\log\frac{\widetilde{V}_{p,0}(K,L,Q)}{V(Q)},
\end{align}
with equality if and only if $K$ and $L$ are dilates.
\end{thm}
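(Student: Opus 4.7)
The plan is to mimic the variational template used for \eqref{MIp} and \eqref{MIp=0}: start from a Brunn--Minkowski-type inequality that is tight at $t=0,1$, upgrade it to concavity of the defect, and differentiate at $t=0$ via the appropriate variational formula. The correct input here is \eqref{BMIq=02}, repackaged via the functional $\Phi(M):=\exp(\widetilde{E}(M,Q)/V(Q))$ raised to the $p$-th power: this is the natural ``volume-like'' object on which \eqref{BMIq=02} becomes \emph{additively} Brunn--Minkowski. The first step is the dilation identity $\Phi(\lambda M)^p=\lambda^p\Phi(M)^p$ for $\lambda>0$, which follows immediately from $\widetilde{E}(\lambda M,Q)=\widetilde{E}(M,Q)+V(Q)\log\lambda$.

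Since $(1-t)\cdot K=(1-t)^{1/p}K$ and $t\cdot L=t^{1/p}L$ in the ordinary scalar sense, applying \eqref{BMIq=02} to the pair $((1-t)\cdot K,\,t\cdot L)$ and using this homogeneity yields
$$\Phi(K_t)^p\;\geq\;(1-t)\Phi(K)^p+t\Phi(L)^p,\qquad t\in[0,1],$$
where $K_t:=(1-t)\cdot K+_p t\cdot L$, with equality if and only if $K$ and $L$ are dilates. Define $h(t):=\Phi(K_t)^p-(1-t)\Phi(K)^p-t\Phi(L)^p$; then $h\geq 0$ and $h(0)=h(1)=0$. The associativity identity $K_{(1-\lambda)t_1+\lambda t_2}=(1-\lambda)\cdot K_{t_1}+_p\lambda\cdot K_{t_2}$, already exploited in the proof of \eqref{MIp}, combined with the displayed inequality applied to $(K_{t_1},K_{t_2})$, shows that $t\mapsto\Phi(K_t)^p$, and hence $h$, is concave on $[0,1]$.

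Concavity of $h$ together with $h\geq 0$ and $h(0)=0$ gives $h'(0^+)\geq 0$. Using the chain rule $\frac{d}{dt}\Phi(K_t)^p\big|_{t=0}=(p/V(Q))\Phi(K)^p\cdot\frac{d}{dt}\widetilde{E}(K_t,Q)\big|_{t=0}$ together with \eqref{variationalfq=0}, this rearranges to $\Phi(L)^p/\Phi(K)^p\leq\widetilde{V}_{p,0}(K,L,Q)/V(Q)$; taking logarithms and using the identity $\widetilde{E}(L,Q)-\widetilde{E}(K,Q)=\frac{1}{n}\int_{S^{n-1}}\log(\rho_L/\rho_K)\rho_Q^n\,du$ produces exactly \eqref{MIq=0}. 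Equality in \eqref{MIq=0} then forces $h'(0^+)=0$, which together with $h$ concave, $h\geq 0$, $h(0)=0$ implies $h\equiv 0$ on $[0,1]$, so the scaled pair saturates \eqref{BMIq=02} and its equality clause forces $K$ and $L$ to be dilates. I expect the only subtle point to be the choice of functional: working with $\Phi^p$ rather than $\widetilde{E}$ or $\Phi$ itself is what converts \eqref{BMIq=02} into a linear-in-$t$ lower bound, and this is precisely what makes the derivative coming from \eqref{variationalfq=0} align with the $\log$ on the right-hand side of the target inequality.
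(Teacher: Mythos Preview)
Your argument is correct and follows essentially the same route as the paper: define $h(t)=\Phi(K_t)^p-(1-t)\Phi(K)^p-t\Phi(L)^p$ with $\Phi(M)=\exp(\widetilde{E}(M,Q)/V(Q))$, use \eqref{BMIq=02} (plus the homogeneity $\Phi(\lambda M)^p=\lambda^p\Phi(M)^p$ and the associativity of $+_p$) to get $h\ge 0$ and $h$ concave with $h(0)=h(1)=0$, then differentiate at $t=0$ via \eqref{variationalfq=0} and read off \eqref{MIq=0} together with its equality condition. The only cosmetic difference is that you make the homogeneity and associativity steps explicit, whereas the paper invokes \eqref{BMIq=02} directly to assert nonnegativity and concavity of $h$.
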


\begin{proof}
By \eqref{BMIq=02}, the following function
\begin{align*}
h(t)&=\left(\exp\Big(\frac{\widetilde{E}\big( (1-t)\cdot K+_pt\cdot L,Q\big)}{V(Q)}\Big)\right)^p
-(1-t)\left(\exp\Big(\frac{\widetilde{E}(K,Q)}{V(Q)}\Big)\right)^{p}\\
&\quad-t\left(\exp\Big(\frac{\widetilde{E}(L,Q)}{V(Q)}\Big)\right)^p
\end{align*}
is nonnegative for $t\in[0,1]$ and satisfies $h(0)=h(1)=0$.
Moreover, by \eqref{BMIq=02} again, we obtain
\begin{align*}
h((1-\lambda) t_1+\lambda t_2)\geq (1-\lambda)h(t_1)+\lambda h(t_2),
\end{align*}
for $t_1,t_2\in [0,1]$ and $\lambda\in(0,1)$.
Hence, $h(t)$ is concave for $t\in[0,1]$. Together with \eqref{variationalfq=0},  we have
\begin{align*}
\lim_{t\rightarrow0}\frac{h(t)-h(0)}{t}
&=\left(\exp\Big(\frac{\widetilde{E}\big( K,Q\big)}{V(Q)}\Big)\right)^p \frac{\widetilde{V}_{p,0}(K,L,Q)-V(Q)}{V(Q)}
+\left(\exp\Big(\frac{\widetilde{E}(K,Q)}{V(Q)}\Big)\right)^{p}\\
&\quad-\left(\exp\Big(\frac{\widetilde{E}(L,Q)}{V(Q)}\Big)\right)^p\\
&=\left(\exp\Big(\frac{\widetilde{E}\big( K,Q\big)}{V(Q)}\Big)\right)^p \frac{\widetilde{V}_{p,0}(K,L,Q)}{V(Q)}
-\left(\exp\Big(\frac{\widetilde{E}(L,Q)}{V(Q)}\Big)\right)^p
\geq 0,
\end{align*}
that is, the inequality \eqref{MIq=0} is established.

Furthermore, $\lim_{t\rightarrow0}\frac{h(t)-h(0)}{t}=0$ only if $h(t)$ is identically 0 for $t\in[0,1]$. The latter implies
$(1-t)\cdot K$ and $t\cdot L$ are dilates
by \eqref{BMIq=02}. Hence, the equality in \eqref{MIq=0} holds if and only if $K$ and $L$ are dilates.
\end{proof}

If $Q=K$ in \eqref{MIq=0}, then, by \eqref{Lpduamv-mv}, we obtain the following inequality:
\begin{cor}
If $K,L\in \mathcal{K}_{o}^n$ and $p>0$, then
\begin{align*}
\frac{1}{nV(K)}\int_{S^{n-1}}\log\bigg(\frac{\rho_{L}(u)}{\rho_K(u)}\bigg)\rho^n_{K}(u)du
\leq\frac{1}{p}\log\frac{V_{p}(K,L)}{V(K)},
\end{align*}
with equality if and only if $K$ and $L$ are dilates.
\end{cor}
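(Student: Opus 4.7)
The plan is to derive this corollary as a direct specialization of the previously established Minkowski-type inequality \eqref{MIq=0}, which holds for arbitrary $Q\in\mathcal{S}_{o}^n$. Since $\mathcal{K}_{o}^n\subseteq\mathcal{S}_{o}^n$, we may legitimately take $Q=K$ in \eqref{MIq=0}; the only content of the proof is to track how the terms simplify under this choice.

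Concretely, I would proceed as follows. First, apply \eqref{MIq=0} with the substitution $Q=K$, which instantly replaces $\rho_{Q}$ by $\rho_{K}$ and $V(Q)$ by $V(K)$ on both sides of the inequality. On the left-hand side this turns $\rho_{Q}^{n}(u)$ into $\rho_{K}^{n}(u)$ and replaces the factor $1/(nV(Q))$ by $1/(nV(K))$, which already matches the corollary. On the right-hand side, the quantity $\widetilde{V}_{p,0}(K,L,Q)$ becomes $\widetilde{V}_{p,0}(K,L,K)$, and invoking identity \eqref{Lpduamv-mv} with $q=0$, namely $\widetilde{V}_{p,q}(K,L,K)=V_{p}(K,L)$, identifies this with the $L_p$ mixed volume $V_{p}(K,L)$. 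Dividing by $V(K)$ inside the logarithm then yields precisely $\tfrac{1}{p}\log\tfrac{V_{p}(K,L)}{V(K)}$.

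For the equality condition, I would simply transfer it: the equality case of \eqref{MIq=0} says that equality holds there if and only if $K$ and $L$ are dilates; this characterization is independent of the particular choice of $Q$ and so persists under the specialization $Q=K$.

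Since the argument is a one-line substitution, there is no genuine obstacle; the only point worth checking carefully is that every factor on both sides of \eqref{MIq=0} is well defined at $Q=K$, which is immediate from $K\in\mathcal{K}_{o}^n\subseteq\mathcal{S}_{o}^n$ and from $\rho_K\in C^+(S^{n-1})$. Hence the inequality, together with its equality characterization, transfers verbatim to the stated corollary.
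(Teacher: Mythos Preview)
Your proposal is correct and matches the paper's approach exactly: the corollary is obtained by substituting $Q=K$ in \eqref{MIq=0} and invoking the identity \eqref{Lpduamv-mv} to rewrite $\widetilde{V}_{p,0}(K,L,K)$ as $V_p(K,L)$, with the equality condition carried over verbatim.
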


The following theorem establishes the uniqueness of the solution to the $L_p$ dual Minkowski problem for $p>0$ and $q=0$.

\begin{thm}\label{uniquenessq=0}
Let $Q\in \mathcal{S}_{o}^n$ and $p>0$. If $K,L\in\mathcal{K}_o^n$
and
\begin{align*}
\widetilde{C}_{p,0}(K,Q,\cdot)=\widetilde{C}_{p,0}(L,Q,\cdot),
\end{align*}
then $K=L$.
\end{thm}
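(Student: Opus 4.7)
The plan is to mimic the proofs of Theorem \ref{uniquenesspq0} and Theorem \ref{uniquenessp=0}, substituting the Minkowski-style inequality \eqref{MIq=0} in place of \eqref{MIp} or \eqref{MIp=0}. First I would unpack what the hypothesis $\widetilde{C}_{p,0}(K,Q,\cdot) = \widetilde{C}_{p,0}(L,Q,\cdot)$ says after integrating against the two natural test functions $h_K^p$ and $h_L^p$. Using the definition \eqref{Lpdualmv} of $\widetilde{V}_{p,q}$ together with \eqref{Lpdualmvdmv} specialized at $q=0$ (which reads $\widetilde{V}_{p,0}(M,M,Q)=\widetilde{V}_0(M,Q)=V(Q)$), the equality of measures yields
\begin{align*}
\widetilde{V}_{p,0}(K,L,Q) &= \widetilde{V}_{p,0}(L,L,Q) = V(Q),\\
\widetilde{V}_{p,0}(L,K,Q) &= \widetilde{V}_{p,0}(K,K,Q) = V(Q).
\end{align*}

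Next, I would apply the Minkowski-style inequality \eqref{MIq=0} twice, once in the $(K,L)$ order and once with the roles reversed. The right-hand sides become $\tfrac{1}{p}\log(V(Q)/V(Q))=0$ in both cases, so
\begin{align*}
\frac{1}{nV(Q)}\int_{S^{n-1}}\log\frac{\rho_L(u)}{\rho_K(u)}\rho^n_Q(u)\,du &\leq 0,\\
\frac{1}{nV(Q)}\int_{S^{n-1}}\log\frac{\rho_K(u)}{\rho_L(u)}\rho^n_Q(u)\,du &\leq 0.
\end{align*}
Adding these forces equality in both instances. The equality case of \eqref{MIq=0} then yields that $K$ and $L$ are dilates, say $L=\lambda K$ for some $\lambda>0$.

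Finally I would pin down $\lambda$ by homogeneity. From \eqref{Lpdcmh} with $q=0$ we have $\widetilde{C}_{p,0}(\lambda K,Q,\cdot)=\lambda^{-p}\widetilde{C}_{p,0}(K,Q,\cdot)$, so the hypothesis forces $\lambda^{-p}=1$, and since $p>0$ this gives $\lambda=1$, hence $K=L$.

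I do not anticipate any real obstacle here, because the hard analytic work has already been absorbed into \eqref{MIq=0}; the only thing to be careful about is that the relation $\widetilde{V}_{p,0}(M,M,Q)=V(Q)$ (rather than $V(M)$) is used correctly, which is exactly the point where the specialization $q=0$ produces a symmetric cancellation on the right-hand side of \eqref{MIq=0} and enables the two-sided trapping argument to go through.
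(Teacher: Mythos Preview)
Your proposal is correct and follows essentially the same route as the paper: derive $\widetilde{V}_{p,0}(K,L,Q)=\widetilde{V}_{p,0}(L,K,Q)=V(Q)$ from the measure equality, apply \eqref{MIq=0} in both directions to trap the integral at zero, invoke the equality case to get that $K$ and $L$ are dilates, and then fix the dilation factor. The only cosmetic difference is in the last step: the paper pins down $\lambda=1$ via the total-mass identity $\widetilde{V}_{p,0}(K,B,Q)=\widetilde{V}_{p,0}(L,B,Q)$, whereas you invoke the homogeneity \eqref{Lpdcmh} directly---these are the same computation.
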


\begin{proof}
By \eqref{Lpdualmv}, $\widetilde{C}_{p,0}(K,Q,\cdot)=\widetilde{C}_{p,0}(L,Q,\cdot)$ and \eqref{Lpdualmvdmv},
we have
\begin{align}\label{uniquenessq=0-1}
\widetilde{V}_{p,0}(K,B,Q)=\widetilde{C}_{p,0}(K,Q,S^{n-1})
=\widetilde{C}_{p,0}(L,Q,S^{n-1})=\widetilde{V}_{p,0}(L,B,Q),
\end{align}
and
\begin{align}
\widetilde{V}_{p,0}(K,L,Q)&=\widetilde{V}_{p,0}(L,L,Q)=V(Q),\label{uniquenessq=0-2}\\
\widetilde{V}_{p,0}(L,K,Q)&=\widetilde{V}_{p,0}(K,K,Q)=V(Q).\label{uniquenessq=0-3}
\end{align}
Combining \eqref{MIq=0} with \eqref{uniquenessq=0-2}, we have
\begin{align}\label{uniquenessq=0-4}
\frac{1}{nV(Q)}\int_{S^{n-1}}\log\bigg(\frac{\rho_{L}(u)}{\rho_K(u)}\bigg)\rho^n_{Q}(u)du
\leq0,
\end{align}
with equality if and only if $K$ and $L$ are dilates. By \eqref{MIq=0} and \eqref{uniquenessq=0-3},
\begin{align}\label{uniquenessq=0-5}
\frac{1}{nV(Q)}\int_{S^{n-1}}\log\bigg(\frac{\rho_{K}(u)}{\rho_L(u)}\bigg)\rho^n_{Q}(u)du
\leq0,
\end{align}
with equality if and only if $K$ and $L$ are dilates.

Combining \eqref{uniquenessq=0-4} with \eqref{uniquenessq=0-5},
we obtain
\begin{align*}
\frac{1}{nV(Q)}\int_{S^{n-1}}\log\bigg(\frac{\rho_{L}(u)}{\rho_K(u)}\bigg)\rho^n_{Q}(u)du
=0.
\end{align*}
It follows that $K$ and $L$ are dilates from the equality condition of \eqref{uniquenessq=0-4}. Together with \eqref{uniquenessq=0-1}, we obtain $K=L$.
\end{proof}

\begin{remark}
Since $L_p$ dual Minkowski problem for $q=0$ and $Q=B$ is $L_p$ Aleksandrov problem (see \cite{HuangLYZ2018}),
then we obtain the uniqueness of the solution to $L_p$ Aleksandrov problem for $p>0$ by Theorem \ref{uniquenessq=0}
for $Q=B$.
\end{remark}

Theorem \ref{uniqueness} is restated as follows:

\begin{thm}\label{uniqueness-0}
Let $q<p$ and $Q\in \mathcal{S}_{o}^n$. If $K,L\in\mathcal{K}_o^n$
and
\begin{align*}
\widetilde{C}_{p,q}(K,Q,\cdot)=\widetilde{C}_{p,q}(L,Q,\cdot),
\end{align*}
then $K=L$.
\end{thm}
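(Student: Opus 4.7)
The plan is to observe that Theorem \ref{uniqueness-0} is essentially a packaging of the three uniqueness results already established in this section, namely Theorem \ref{uniquenesspq0}, Theorem \ref{uniquenessp=0}, and Theorem \ref{uniquenessq=0}. The hypothesis $q<p$ forces one of three mutually exclusive cases, and each falls under exactly one of the previously proven theorems, so the proof reduces to a short case split.

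First I would partition the parameter range $\{(p,q):q<p\}$ into the three pieces: (i) $p\neq 0$ and $q\neq 0$, which splits into the three subregions $q<p<0$, $q<0<p$, and $0<q<p$; (ii) $p=0$, which forces $q<0$; and (iii) $q=0$, which forces $p>0$. Each piece exhausts its boundary condition, and together they cover all of $\{q<p\}$. Next, for case (i) I would directly invoke Theorem \ref{uniquenesspq0}, which was proved via the Minkowski-type inequality \eqref{MIp} obtained from the Brunn--Minkowski type inequalities \eqref{BMIp1} and \eqref{BMIp2}. For case (ii), using $\widetilde{C}_{0,q}=\widetilde{C}_{q}$ from \eqref{Lpdualcmp=0}, the hypothesis becomes $\widetilde{C}_{q}(K,Q,\cdot)=\widetilde{C}_{q}(L,Q,\cdot)$ and Theorem \ref{uniquenessp=0} applies. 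For case (iii), the hypothesis reads $\widetilde{C}_{p,0}(K,Q,\cdot)=\widetilde{C}_{p,0}(L,Q,\cdot)$ with $p>0$, which is precisely the content of Theorem \ref{uniquenessq=0}.

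In each of the three cases, the conclusion $K=L$ follows, finishing the proof. The only bookkeeping issue worth flagging is the boundary behavior between the cases, in particular making sure the restatement handles $p=0$ and $q=0$ without double counting; this is immediate because $q<p$ excludes $p=q=0$. Since all the substantive work, namely the three Minkowski-type inequalities \eqref{MIp}, \eqref{MIp=0}, \eqref{MIq=0} and their deduction of uniqueness, has been carried out earlier in this section, there is no genuine obstacle here, only the need to check that the union of the three subcases is exactly $\{q<p\}$.
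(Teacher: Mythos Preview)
Your proposal is correct and matches the paper's own proof essentially verbatim: the paper also splits $\{q<p\}$ into the three cases $p\neq 0,\,q\neq 0$; $p=0,\,q<0$; and $q=0,\,p>0$, and then invokes Theorem \ref{uniquenesspq0}, Theorem \ref{uniquenessp=0} (via \eqref{Lpdualcmp=0}), and Theorem \ref{uniquenessq=0} respectively. There is nothing to add.
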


\begin{proof}
Theorem \ref{uniqueness-0} is divided into three cases.
Theorem \ref{uniquenesspq0} is the case $q<p$ with $p\neq0$ and $q\neq0$ of  Theorem \ref{uniqueness-0}.
By \eqref{Lpdualcmp=0}, we have Theorem \ref{uniquenessp=0} is the case $p=0$ and $q<0$ of  Theorem \ref{uniqueness-0}.
Furthermore, Theorem \ref{uniquenessq=0} is the case $p>0$ and $q=0$ of  Theorem \ref{uniqueness-0}.
Therefore, Theorem \ref{uniqueness-0} is obtained by Theorem \ref{uniquenesspq0},
Theorem \ref{uniquenessp=0} and Theorem \ref{uniquenessq=0}.
\end{proof}

\begin{remark}
In \cite{LYZ2018}, Lutwak, Yang and Zhang established the uniqueness of the solution to the $L_p$ dual Minkowski problem for the case of polytopes when $q<p$. We extend their result to the case for general convex bodies by constructing some new Minkowski-type inequalities.
\end{remark}

\vskip 0.5cm
\section{Continuity}\label{}
\vskip 0.3cm

In this section, we will consider the continuity of the solution to the $L_p$ dual Minkowski problem.
For the case $q<0\leq p$, some lemmas are needed.
Motivated by Zhao \cite{Zhao1}, we obtain the following lemma:

\begin{lem}\label{polarbound}
Let $q<0$, $Q\in\mathcal{S}_{o}^n$ and $K_i\in\mathcal{K}_{o}^n$ for $i=1,2,\cdots$. If
the sequence $\{\widetilde{V}_q(K_i,Q)\}$ is bounded from above, then there exists $M=M(Q)> 1$ such that
\begin{align*}
K^*_i\subseteq MB,
\end{align*}
for all $i=1,2,\cdots$.
\end{lem}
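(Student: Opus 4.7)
The plan is to prove the contrapositive: if no such uniform $M$ exists, then $\widetilde{V}_q(K_i,Q)$ cannot remain bounded. By the duality $\rho_{K_i^*}=1/h_{K_i}$, containment $K_i^*\subseteq MB$ is equivalent to the pointwise lower bound $h_{K_i}(u)\geq 1/M$ on $S^{n-1}$. Thus it suffices to produce a uniform lower bound on $\min_{S^{n-1}} h_{K_i}$ depending only on $Q$ and the given upper bound on $\widetilde{V}_q(K_i,Q)$.

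Suppose, for contradiction, that no such $M$ works. Then after passing to a subsequence I can find unit vectors $u_i\in S^{n-1}$ with $h_{K_i}(u_i)\to 0$. The key geometric observation is that for any $v\in S^{n-1}$ with $u_i\cdot v>0$, the boundary point $\rho_{K_i}(v)v\in K_i$ satisfies $u_i\cdot(\rho_{K_i}(v)v)\leq h_{K_i}(u_i)$, so
\[
\rho_{K_i}(v)\leq\frac{h_{K_i}(u_i)}{u_i\cdot v}.
\]
Since $q<0$, raising to the $q$-th power reverses the inequality and gives
\[
\rho_{K_i}^{q}(v)\geq h_{K_i}(u_i)^{q}(u_i\cdot v)^{-q}
\]
on the hemisphere $\{u_i\cdot v>0\}$.

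Now I would restrict the integral defining $\widetilde{V}_q(K_i,Q)$ to the cap $\Omega_i=\{v\in S^{n-1}:u_i\cdot v\geq 1/2\}$. On $\Omega_i$ the factor $(u_i\cdot v)^{-q}$ is bounded below by $2^{q}>0$ (recall $-q>0$), and $\rho_Q^{n-q}(v)\geq(\min_{S^{n-1}}\rho_Q)^{n-q}=:c(Q)>0$ since $Q\in\mathcal{S}_o^n$ and $n-q>0$. Since the spherical Lebesgue measure of $\Omega_i$ is a positive constant independent of $i$, I obtain a constant $C(Q)>0$ with
\[
\widetilde{V}_q(K_i,Q)\geq\frac{1}{n}\int_{\Omega_i}\rho_{K_i}^{q}(v)\rho_Q^{n-q}(v)\,dv\geq C(Q)\,h_{K_i}(u_i)^{q}.
\]
Since $q<0$ and $h_{K_i}(u_i)\to 0$, the right-hand side tends to $+\infty$, contradicting the assumed upper bound on $\{\widetilde{V}_q(K_i,Q)\}$. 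Therefore some $\delta>0$ satisfies $h_{K_i}\geq\delta$ on $S^{n-1}$ for every $i$, and taking $M=\max\{2,1/\delta\}$ yields $K_i^*\subseteq MB$ with $M>1$, as required.

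The main technical point, and the only place care is needed, is the hemisphere estimate converting the scalar bound $h_{K_i}(u_i)\to 0$ into a lower bound for $\rho_{K_i}^q$ on a set of uniformly positive measure; once that is in place the argument is immediate from $q<0$ and the positivity of $\rho_Q$. No uniformity in $u_i$ is required beyond the fact that a half-space cap $\{u_i\cdot v\geq 1/2\}$ always has the same spherical measure, so selecting a convergent subsequence of the $u_i$ is not actually needed.
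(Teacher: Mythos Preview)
Your proof is correct and follows essentially the same route as the paper. Both arguments rest on the inequality $\rho_{K_i}(v)\leq h_{K_i}(u_i)/(u_i\cdot v)$ on the hemisphere $\{u_i\cdot v>0\}$ (equivalently, $h_{K_i^*}(v)\geq(u_i\cdot v)_+\rho_{K_i^*}(u_i)$ in the paper's dual formulation), and then integrate $\rho_{K_i}^q\rho_Q^{n-q}$ to force $\widetilde V_q(K_i,Q)\to\infty$ when $\min_{S^{n-1}}h_{K_i}\to 0$. The only cosmetic differences are that the paper works directly and quantitatively (obtaining $\rho_{K_i^*}(v_i)\leq m_0^{1/q}r_Q^{(n-q)/q}\widetilde V_q(K_i,Q)^{-1/q}$ by integrating $(u\cdot v_i)_+^{-q}$ over the whole sphere using rotational invariance), whereas you argue by contradiction and integrate only over the cap $\{u_i\cdot v\geq 1/2\}$; neither choice affects the substance.
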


\begin{proof}
Since $Q\in\mathcal{S}_{o}^n$, there exists a constant $r_Q>0$ such that $r_QB\subseteq Q$. Then, for each $u\in S^{n-1}$,
\begin{align}\label{bound-1}
\rho_Q(u)\geq r_Q.
\end{align}
Since $\rho_{K^*_i}$ is continuous function on compact set $S^{n-1}$, then there exists $v_i\in S^{n-1}$ such that
\begin{align*}
\rho_{K^*_i}(v_i)=\max\{\rho_{K^*_i}(v): v\in S^{n-1}\}.
\end{align*}
Let $\xi_+=\max\{\xi, 0\}$. From $K_i\in\mathcal{K}_{o}^n$ and the definition of the support function, we have
\begin{align}\label{bound-2}
h_{K^*_i}(u)\geq (u\cdot v_i)_+\rho_{K^*_i}(v_i),
\end{align}
for all $u\in S^{n-1}$.

By the rotational invariance of the spherical Lebesgue measure, the integral
\begin{align}\label{bound-2-1}
\frac{1}{n}\int_{S^{n-1}}(u\cdot v)_+^{-q}du
\end{align}
is independent of the choice of $v\in S^{n-1}$.
Since the spherical
Lebesgue measure is not concentrated in any closed hemisphere, then the integral \eqref{bound-2-1} is positive for all $v\in S^{n-1}$.
Thus, there exists a constant $m_0>0$ such that
\begin{align}\label{bound-3}
m_0=\frac{1}{n}\int_{S^{n-1}}(u\cdot v)_+^{-q}du,\quad v\in S^{n-1}.
\end{align}

Combining \eqref{bound-1}, \eqref{bound-2}, \eqref{bound-3} with $q<0$, we have
\begin{align*}
\widetilde{V}_q(K_i,Q)
=\frac{1}{n}\int_{S^{n-1}}\rho^q_{K_i}(u)\rho^{n-q}_{Q}(u)du
&\geq \frac{r_Q^{n-q}}{n}\int_{S^{n-1}}h^{-q}_{K^*_i}(u)du\\
&\geq \frac{r_Q^{n-q}}{n}\int_{S^{n-1}}(u\cdot v_i)_+^{-q}\rho_{K^*_i}(v_i)^{-q}du\\
&=m_0r_Q^{n-q}\rho_{K^*_i}(v_i)^{-q}.
\end{align*}
It follows that
\begin{align*}
\rho_{K^*_i}(v_i)\leq m_0^\frac{1}{q}r_Q^{\frac{n-q}{q}}\widetilde{V}_q(K_i,Q)^\frac{1}{-q},
\end{align*}
for all $i$.

Since $\{\widetilde{V}_q(K_i,Q)\}$ is bounded from above and $q<0$, then
then there exists $M=M(Q)> 1$ such that
\begin{align*}
K^*_i\subseteq MB,
\end{align*}
for all $i=1,2,\cdots$.
\end{proof}

Let $\omega_\delta(u)=\{v\in S^{n-1}: v\cdot u>\delta\}$ for $u\in S^{n-1}$ and $0<\delta<1$, then
$\omega_\delta(u)$ is an open set of $S^{n-1}$.
In \cite{Zhao1}, Zhao applied the following result to prove the existence of the dual Minkowski problem for negative indices. For the sake of completeness, we provide the proof of the following lemma.

\begin{lem}[\cite{Zhao1}]\label{oboundary}
Suppose the sequence $\{K_i\}\subseteq \mathcal{K}_{o}^n$ converges to a compact convex set $K$ with $o\in\partial K$
and $\mu$ is a non-zero finite Borel measure on $S^{n-1}$ which is not concentrated
in any closed hemisphere of $S^{n-1}$, then there exist a constant $0<\delta_0<1$ and $u_0\in S^{n-1}$ such that
\begin{align*}
\mu(\omega_{\delta_0}(u_0))>0,
\end{align*}
and the sequence $\{\rho_{K_{i}}\}$ uniformly converges to $0$ on $\omega_{\delta_0}(u_0)$.
\end{lem}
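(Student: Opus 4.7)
The plan is to extract a single outer unit normal $u_0$ at the origin from the fact that $o \in \partial K$, and then combine two ingredients: (i) uniform convergence of support functions forces $\rho_{K_i}$ to vanish uniformly on any spherical cap around $u_0$, and (ii) the non-concentration hypothesis on $\mu$ guarantees that some such cap has positive $\mu$-measure.

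First I would invoke the supporting hyperplane theorem at the boundary point $o \in \partial K$: since $K$ is a compact convex set containing $o$ on its boundary, there exists $u_0 \in S^{n-1}$ with $K \subseteq \{x \in \mathbb{R}^n : x \cdot u_0 \le 0\}$, or equivalently $h_K(u_0) = 0$. Because $K_i \to K$ in the Hausdorff metric, the support functions converge uniformly on $S^{n-1}$, so $h_{K_i}(u_0) \to h_K(u_0) = 0$. Now for any $\delta \in (0,1)$ and any $u \in \omega_\delta(u_0)$, the point $\rho_{K_i}(u)\,u$ lies in $K_i$, hence
\begin{equation*}
\rho_{K_i}(u)\,(u \cdot u_0) \le h_{K_i}(u_0),
\end{equation*}
and since $u \cdot u_0 > \delta$ we obtain $\rho_{K_i}(u) \le h_{K_i}(u_0)/\delta$, which converges to $0$ uniformly in $u \in \omega_\delta(u_0)$ as $i \to \infty$.

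Second, I would produce the required $\delta_0$ using the non-concentration hypothesis. Observe that the complement of the open hemisphere $\omega_0(u_0) = \{u \in S^{n-1} : u \cdot u_0 > 0\}$ is the closed hemisphere $\{u \in S^{n-1} : u \cdot (-u_0) \ge 0\}$; if $\mu(\omega_0(u_0))$ were zero, then $\mu$ would be concentrated in this closed hemisphere, contradicting the hypothesis. Therefore $\mu(\omega_0(u_0)) > 0$. Since $\omega_\delta(u_0) \uparrow \omega_0(u_0)$ as $\delta \downarrow 0$, continuity of measure from below yields $\mu(\omega_\delta(u_0)) \to \mu(\omega_0(u_0)) > 0$, so we may pick $\delta_0 \in (0,1)$ with $\mu(\omega_{\delta_0}(u_0)) > 0$. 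Combining this choice with the previous uniform estimate completes the proof.

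The argument has no serious obstacle; the one place to be careful is the interplay between the two requirements on $\delta_0$: we need $\delta_0$ small enough that $\mu(\omega_{\delta_0}(u_0)) > 0$ (which forces us to work with open caps, and which is exactly where the non-concentration hypothesis enters), yet the uniform decay $\rho_{K_i} \to 0$ holds for \emph{every} positive $\delta$, so any single choice of $\delta_0$ simultaneously achieves both. The only conceptual step is recognizing that the outward normal $u_0$ at the boundary point $o$ is precisely the direction along which $h_{K_i}(u_0) \to 0$, which is what converts Hausdorff convergence into uniform radial decay on a hemisphere.
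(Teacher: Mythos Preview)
Your proof is correct and follows essentially the same approach as the paper: both select $u_0$ as an outer unit normal at $o\in\partial K$ so that $h_K(u_0)=0$, use the estimate $\rho_{K_i}(u)\le h_{K_i}(u_0)/\delta$ on $\omega_\delta(u_0)$ to get uniform radial decay, and then invoke continuity of measure from below (the paper calls it the monotone convergence theorem) together with the non-concentration hypothesis to locate $\delta_0$ with $\mu(\omega_{\delta_0}(u_0))>0$.
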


\begin{proof}
Since $o\in\partial K$,
then there exists $u_0\in S^{n-1}$ such that $h_{K}(u_0)=0$.
Since the sequence $\{K_i\}$ converges to $K$, then
\begin{align}\label{ssc}
\lim_{i\rightarrow +\infty}h_{K_{i}}(u_0)=h_{K}(u_0)=0.
\end{align}
For each $v\in \omega_\delta(u_0)$, we have $\rho_{K_{i}}(v)v\in\partial K_i$. Therefore, by \eqref{supportf},
\begin{align*}
h_{K_{i}}(u_0)\geq (v\cdot u_0)\rho_{K_{i}}(v)>\delta \rho_{K_{i}}(v),\quad v\in \omega_\delta(u_0).
\end{align*}
It follows that
\begin{align}\label{rsc}
\rho_{K_{i}}(v)<\frac{1}{\delta}h_{K_{i}}(u_0),
\end{align}
for all $v\in \omega_\delta(u_0)$. By \eqref{ssc} and \eqref{rsc},
$\{\rho_{K_{i}}\}$ uniformly converges to $0$ on $\omega_\delta(u_0)$.

By the monotone convergence theorem and the assumption that $\mu$ is not concentrated in any closed hemisphere, we obtain
\begin{align*}
\lim_{\delta\rightarrow 0}
\mu(\omega_{\delta}(u_0))=\mu(\{v\in S^{n-1}: v\cdot u_0>0\})>0.
\end{align*}
There exists a constant $0<\delta_0<1$ such that
\begin{align*}
\mu(\omega_{\delta_0}(u_0))>0,
\end{align*}
and the sequence $\{\rho_{K_{i}}\}$ uniformly converges to $0$ on $\omega_{\delta_0}(u_0)$.
\end{proof}

The weak convergence of the sequence of $L_p$ dual curvature measures implies that the sequence of the corresponding total measures is bounded from above and below.

\begin{lem}\label{pmixedvolumeb}
Suppose $q<p$, $Q\in\mathcal{S}_{o}^n$ and $K_i\in\mathcal{K}_{o}^n$ for each $i=0,1,\cdots$.
If the sequence $\{\widetilde{C}_{p,q}(K_i,Q,\cdot)\}$ converges to $\widetilde{C}_{p,q}(K_0,Q,\cdot)$ weakly, then
there exist two constants $0<m_1\leq m_2$ such that, for all $i=1,2,\cdots$,
\begin{align*}
m_1\leq \widetilde{V}_{p,q}(K_i,B,Q)\leq m_2.
\end{align*}
\end{lem}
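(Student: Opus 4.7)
The plan is to reduce everything to convergence of total masses by exploiting the observation that with $L = B$ one has $h_B \equiv 1$ on $S^{n-1}$, so from the definition of the $L_p$ dual mixed volume \eqref{Lpdualmv},
\begin{align*}
\widetilde{V}_{p,q}(K_i,B,Q) = \int_{S^{n-1}} h_B^p(u)\, d\widetilde{C}_{p,q}(K_i,Q,u) = \widetilde{C}_{p,q}(K_i,Q,S^{n-1}).
\end{align*}
Thus proving the bounds reduces entirely to controlling the total mass sequence $\{\widetilde{C}_{p,q}(K_i,Q,S^{n-1})\}$ from above and below.

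For the upper bound, I would apply the weak convergence of $\{\widetilde{C}_{p,q}(K_i,Q,\cdot)\}$ to $\widetilde{C}_{p,q}(K_0,Q,\cdot)$ with the continuous test function $f \equiv 1$. Since $S^{n-1}$ is compact (so there is no issue with escape of mass), this yields
\begin{align*}
\lim_{i\to\infty} \widetilde{C}_{p,q}(K_i,Q,S^{n-1}) = \widetilde{C}_{p,q}(K_0,Q,S^{n-1}) = \widetilde{V}_{p,q}(K_0,B,Q).
\end{align*}
A convergent real sequence is bounded, so an upper bound $m_2$ exists.

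For the lower bound, the key step is to verify that the limit $\widetilde{V}_{p,q}(K_0,B,Q)$ is \emph{strictly positive}. Combining \eqref{Lpdualcm2} with the integral definition of $\widetilde{C}_{q}(K_0,Q,\cdot)$ gives
\begin{align*}
\widetilde{V}_{p,q}(K_0,B,Q) = \frac{1}{n}\int_{S^{n-1}} h_{K_0}(\alpha_{K_0}(u))^{-p}\,\rho_{K_0}^{q}(u)\,\rho_{Q}^{n-q}(u)\, du.
\end{align*}
Since $K_0 \in \mathcal{K}_{o}^n$ and $Q \in \mathcal{S}_{o}^n$, both $\rho_{K_0}$, $h_{K_0}$, $\rho_Q$ are continuous and strictly positive on $S^{n-1}$, hence bounded above and below by positive constants. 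Therefore the integrand is bounded below by a positive constant, giving $\widetilde{V}_{p,q}(K_0,B,Q) > 0$. From convergence to a strictly positive limit, there exists $N$ with $\widetilde{V}_{p,q}(K_i,B,Q) \geq \tfrac{1}{2}\widetilde{V}_{p,q}(K_0,B,Q)$ for all $i \geq N$. For the finitely many indices $1 \leq i < N$, each $\widetilde{V}_{p,q}(K_i,B,Q)$ is itself a positive finite number by the same integral representation applied to $K_i$. Taking $m_1$ to be the minimum over these finitely many positive numbers and $\tfrac{1}{2}\widetilde{V}_{p,q}(K_0,B,Q)$ yields the desired uniform positive lower bound.

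There is no substantive obstacle here; the lemma is a direct consequence of weak convergence of finite Borel measures on the compact space $S^{n-1}$ together with the fact that $K_0$ has nonempty interior. The only subtle point worth emphasizing in the write-up is the strict positivity of the limit, which is precisely where the assumption $K_0 \in \mathcal{K}_{o}^n$ (as opposed to merely a compact convex set) enters.
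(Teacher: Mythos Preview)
Your proof is correct and follows essentially the same approach as the paper: both reduce to convergence of total masses $\widetilde{V}_{p,q}(K_i,B,Q)=\widetilde{C}_{p,q}(K_i,Q,S^{n-1})$ under weak convergence, note that the limit $\widetilde{V}_{p,q}(K_0,B,Q)$ is strictly positive because $K_0\in\mathcal{K}_o^n$ and $Q\in\mathcal{S}_o^n$, and then combine an $\varepsilon$-neighborhood of the limit with the finitely many initial terms to obtain $m_1$ and $m_2$. Your write-up is slightly more explicit about the positivity of the integrand via the integral representation \eqref{Lpdcmintegral}, but there is no substantive difference.
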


\begin{proof}
Since the assume that $\{\widetilde{C}_{p,q}(K_i,Q,\cdot)\}$ converges to $\widetilde{C}_{p,q}(K_0,Q,\cdot)$ weakly,
then the sequence of the corresponding total measures is also convergent, that is,
\begin{align*}
\widetilde{V}_{p,q}(K_i,B,Q)\rightarrow\widetilde{V}_{p,q}(K_0,B,Q), \quad \text{as}~i\rightarrow+\infty.
\end{align*}
By $K_0\in\mathcal{K}_{o}^n$, $Q\in\mathcal{S}_{o}^n$ and \eqref{Lpdualmv}, we have $\widetilde{V}_{p,q}(K_0,B,Q)>0$. Thus, there exist
$0<\varepsilon_0<\widetilde{V}_{p,q}(K_0,B,Q)$ and positive integer $N_0$ such that
\begin{align*}
\widetilde{V}_{p,q}(K_0,B,Q)-\varepsilon_0\leq\widetilde{V}_{p,q}(K_i,B,Q)\leq\widetilde{V}_{p,q}(K_0,B,Q)+\varepsilon_0
\end{align*}
for all $i>N_0$.

Choosing $m_1=\min\{\widetilde{V}_{p,q}(K_0,B,Q)-\varepsilon_0, \widetilde{V}_{p,q}(K_1,B,Q), \cdots,\widetilde{V}_{p,q}(K_{N_0},B,Q)\}$ and $m_2=\max\{\widetilde{V}_{p,q}(K_0,B,Q)+\varepsilon_0, \widetilde{V}_{p,q}(K_1,B,Q), \cdots,\widetilde{V}_{p,q}(K_{N_0},B,Q)\}$, we obtain,
\begin{align*}
m_1\leq \widetilde{V}_{p,q}(K_i,B,Q)\leq m_2,
\end{align*}
for all $i$.
\end{proof}

Next, we will estimate the bounds of the sequence of dual mixed volumes.

\begin{lem}\label{dualmvabb}
Suppose $q<0\leq p$, $Q\in\mathcal{S}_{o}^n$ and $K_i\in\mathcal{K}_{o}^n$ for each $i=0,1,\cdots$.
If $\{\widetilde{C}_{p,q}(K_i,Q,\cdot)\}$ converges to $\widetilde{C}_{p,q}(K_0,Q,\cdot)$ weakly, then
there exist two constant $0<m_3\leq m_4$ such that
\begin{align}\label{dualmixedvb}
m_3\leq\widetilde{V}_q(K_i,Q)\leq m_4,
\end{align}
for all $i=1,2,\cdots$.
\end{lem}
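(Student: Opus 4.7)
The strategy combines three ingredients already available: Lemma~\ref{pmixedvolumeb}, which bounds the total masses $\widetilde{V}_{p,q}(K_i,B,Q)$ above and below by constants $m_1,m_2$; the new Minkowski-type inequality~\eqref{MIp}; and Lemma~\ref{polarbound}, which converts an upper bound on $\{\widetilde{V}_q(K_i,Q)\}$ into a uniform lower bound on the inradius of the $K_i$. I split according to whether $p=0$ or $p>0$. When $p=0$ there is nothing to do: the identity $d\widetilde{C}_{0,q}(K,Q,\cdot)=d\widetilde{C}_q(K,Q,\cdot)$ gives $\widetilde{V}_{p,q}(K_i,B,Q)=\widetilde{V}_q(K_i,Q)$, so Lemma~\ref{pmixedvolumeb} yields the conclusion with $m_3=m_1$ and $m_4=m_2$.

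For $p>0$ I first extract the upper bound by applying \eqref{MIp} with $L=B$:
\begin{align*}
\bigg(\frac{\widetilde{V}_{p,q}(K_i,B,Q)}{\widetilde{V}_q(K_i,Q)}\bigg)^{\!1/p}\;\ge\;\bigg(\frac{\widetilde{V}_q(B,Q)}{\widetilde{V}_q(K_i,Q)}\bigg)^{\!1/q}.
\end{align*}
Since $q<0<p$, the quantity $\frac{1}{q}-\frac{1}{p}$ is strictly negative, so isolating $\widetilde{V}_q(K_i,Q)$ (and reversing the inequality when raising to the negative reciprocal) produces an explicit bound of the form
\begin{align*}
\widetilde{V}_q(K_i,Q)\;\le\;\widetilde{V}_q(B,Q)^{p/(p-q)}\,\widetilde{V}_{p,q}(K_i,B,Q)^{-q/(p-q)}.
\end{align*}
Both exponents are positive, $\widetilde{V}_q(B,Q)$ depends only on $Q$, and $\widetilde{V}_{p,q}(K_i,B,Q)\le m_2$ by Lemma~\ref{pmixedvolumeb}; together this yields a uniform upper bound $\widetilde{V}_q(K_i,Q)\le m_4$.

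For the lower bound I feed this upper bound into Lemma~\ref{polarbound}: there exists $M=M(Q)>1$ such that $K_i^*\subseteq MB$ for every $i$, equivalently $h_{K_i}\ge M^{-1}$ on $S^{n-1}$. Using $d\widetilde{C}_{p,q}(K_i,Q,\cdot)=h_{K_i}^{-p}\,d\widetilde{C}_q(K_i,Q,\cdot)$ together with the lower bound from Lemma~\ref{pmixedvolumeb},
\begin{align*}
m_1\;\le\;\widetilde{V}_{p,q}(K_i,B,Q)\;=\;\int_{S^{n-1}} h_{K_i}^{-p}\,d\widetilde{C}_q(K_i,Q,u)\;\le\;M^p\,\widetilde{V}_q(K_i,Q),
\end{align*}
so $\widetilde{V}_q(K_i,Q)\ge m_1 M^{-p}=:m_3>0$. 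The main conceptual step is the upper-bound argument for $p>0$: the weak convergence assumption directly controls only the total mass $\widetilde{V}_{p,q}(K_i,B,Q)$, and it is the new Minkowski-type inequality~\eqref{MIp} that transfers this information to the $q$-th dual mixed volume $\widetilde{V}_q(K_i,Q)$, after which Lemma~\ref{polarbound} and a one-line estimate close out the lower bound.
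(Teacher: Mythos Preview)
Your proof is correct and uses the same three ingredients as the paper (Lemma~\ref{pmixedvolumeb}, inequality~\eqref{MIp}, and Lemma~\ref{polarbound}), with only a minor reordering: the paper obtains the lower bound first by normalizing to $\bar K_i=\widetilde V_q(K_i,Q)^{-1/q}K_i$ so that Lemma~\ref{polarbound} applies directly (making the lower bound independent of the upper bound), and then derives the upper bound from~\eqref{MIp}, whereas you reverse the order and apply Lemma~\ref{polarbound} to the $K_i$ themselves once the upper bound is in hand. Both routes lead to the same constants (your $m_4$ coincides with the paper's $m_2^{q/(q-p)}\widetilde V_q(B,Q)^{-p/(q-p)}$), and your treatment of the case $p=0$ via the identity $\widetilde V_{0,q}(K_i,B,Q)=\widetilde V_q(K_i,Q)$ matches the paper's.
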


\begin{proof}
Let $\bar{K}_i=\widetilde{V}_q(K_i,Q)^{-\frac{1}{q}}K_i$, then $\widetilde{V}_q(\bar{K}_i,Q)=1$. By Lemma \ref{polarbound},
there exists $M=M(Q)>1$ such that
\begin{align*}
\bar{K}_i^*\subseteq MB,
\end{align*}
for all $i=1,2,\cdots$.
Then,
\begin{align*}
K_i^*=\widetilde{V}_q(K_i,Q)^{-\frac{1}{q}}\bar{K}_i^*\subseteq \widetilde{V}_q(K_i,Q)^{-\frac{1}{q}}M B.
\end{align*}
It follows from \eqref{Lpdualcm2} that
\begin{align*}
\widetilde{V}_{p,q}(K_i,B,Q)
&=\int_{S^{n-1}}h_{K_i}^{-p}(u)d\widetilde{C}_{q}(K_i,Q,u)\\
&=\int_{S^{n-1}}\rho_{K_i^*}^{p}(u)d\widetilde{C}_{q}(K_i,Q,u)
\leq M^p \widetilde{V}_q(K_i,Q)^{\frac{q-p}{q}}.
\end{align*}
Together with Lemma \ref{pmixedvolumeb} and $q<0\leq p$, we obtain
\begin{align*}
\widetilde{V}_q(K_i,Q)\geq (m_1 M^{-p})^{\frac{q}{q-p}},
\end{align*}
for all $i=1,2,\cdots$. Hence, we could take $m_3=(m_1 M^{-p})^{\frac{q}{q-p}}$ for $q<0\leq p$.

When $q<0< p$, by \eqref{MIp},
\begin{align*}
\widetilde{V}_{p,q}(K_i,B,Q)^q\leq \widetilde{V}_q(K_i,Q)^{q-p}\widetilde{V}_q(B,Q)^p.
\end{align*}
Hence, together with $q<0< p$ and Lemma \ref{pmixedvolumeb}, we have
\begin{align*}
\widetilde{V}_q(K_i,Q)\leq \widetilde{V}_{p,q}(K_i,B,Q)^\frac{q}{q-p}\widetilde{V}_q(B,Q)^\frac{-p}{q-p}
\leq m_2^\frac{q}{q-p}\widetilde{V}_q(B,Q)^\frac{-p}{q-p},
\end{align*}
for all $i=1,2,\cdots$. Thus, we could take $m_4=m_2^\frac{q}{q-p}\widetilde{V}_q(B,Q)^\frac{-p}{q-p}$ when $q<0< p$.

From \eqref{Lpdualmv} and \eqref{Lpdualcmp=0},
\begin{align*}
\widetilde{V}_{0,q}(K_i,B,Q)=\widetilde{C}_{0,q}(K_i,Q,S^{n-1})=\widetilde{C}_{q}(K_i,Q,S^{n-1})= \widetilde{V}_q(K_i,Q).
\end{align*}
Together with Lemma \ref{pmixedvolumeb}, we could take $m_4=m_2$ when $p=0$ and $q<0$.

By choosing $m_3=(m_1 M^{-p})^{\frac{q}{q-p}}$ and $m_4=m_2^\frac{q}{q-p}\widetilde{V}_q(B,Q)^\frac{-p}{q-p}$ for $q<0\leq p$, we complete the proof of Lemma \ref{dualmvabb}.
\end{proof}

We will show that
the weak convergence of the sequence of the $L_p$ dual curvature measures for $q<0\leq p$ implies that the sequence of the corresponding convex bodies
is bounded from above and below.

\begin{lem}\label{convebsaboveb1}
Suppose $q<0\leq p$, $Q\in\mathcal{S}_{o}^n$ and $K_i\in\mathcal{K}_{o}^n$ for each $i=0,1,\cdots$.
If $\{\widetilde{C}_{p,q}(K_i,Q,\cdot)\}$ converges to $\widetilde{C}_{p,q}(K_0,Q,\cdot)$ weakly, then
the sequence $\{K_i\}$ is bounded from above.
\end{lem}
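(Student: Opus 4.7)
The plan is to argue by contradiction. Suppose $\{K_i\}$ is not bounded from above; passing to a subsequence, there exist $w_i\in S^{n-1}$ with $h_{K_i}(w_i)\to\infty$, equivalently $\min_{u\in S^{n-1}}\rho_{K_i^*}(u)\to 0$. Lemma \ref{dualmvabb} yields $\widetilde{V}_q(K_i,Q)\leq m_4$, whence Lemma \ref{polarbound} gives $K_i^*\subseteq MB$ for some $M=M(Q)>1$. By the Blaschke selection theorem, a further subsequence satisfies $K_i^*\to\widetilde{K}$ in the Hausdorff metric for some compact convex set $\widetilde{K}\ni o$. If $o\in\mathrm{int}\,\widetilde{K}$, the radial functions $\rho_{K_i^*}$ would converge uniformly to the positive function $\rho_{\widetilde{K}}$, contradicting $\min\rho_{K_i^*}\to 0$; hence $o\in\partial\widetilde{K}$.

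With this in hand, I would apply Lemma \ref{oboundary} to $\{K_i^*\}$, taking $\mu=\widetilde{C}_{p,q}(K_0,Q,\cdot)$. Since $K_0\in\mathcal{K}_o^n$ forces $\bm{\alpha}^*_{K_0}(\eta)$ to have positive spherical Lebesgue measure for every open hemisphere $\eta$, $\mu$ is a nonzero finite Borel measure that is not concentrated in any closed hemisphere. Lemma \ref{oboundary} then produces $0<\delta_0<1$, $u_0\in S^{n-1}$ with $\mu(\omega_{\delta_0}(u_0))>0$, and a sequence $c_i\to\infty$ such that $h_{K_i}(v)\geq c_i$ uniformly on $\omega_{\delta_0}(u_0)$.

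The crux is to estimate $\widetilde{C}_{p,q}(K_i,Q,\omega_{\delta_0}(u_0))$ from above. For $u\in\bm{\alpha}^*_{K_i}(\omega_{\delta_0}(u_0))$, $\alpha_{K_i}(u)\in\omega_{\delta_0}(u_0)$, so $h_{K_i}(\alpha_{K_i}(u))\geq c_i$. The tangent hyperplane identity $\rho_{K_i}(u)\,u\cdot\alpha_{K_i}(u)=h_{K_i}(\alpha_{K_i}(u))$ together with $u\cdot\alpha_{K_i}(u)\leq 1$ gives $\rho_{K_i}(u)\geq c_i$. Since $q<0$ and $q-p<0$, the integrand in \eqref{Lpdcmintegral} is dominated by $c_i^{q-p}\rho_Q^{n-q}(u)$, whence
\begin{align*}
\widetilde{C}_{p,q}(K_i,Q,\omega_{\delta_0}(u_0))\leq \frac{c_i^{q-p}}{n}\int_{S^{n-1}}\rho_Q^{n-q}(u)\,du\longrightarrow 0.
\end{align*}
However, by the Portmanteau characterization of weak convergence applied to the open set $\omega_{\delta_0}(u_0)$,
\begin{align*}
\liminf_{i\to\infty}\widetilde{C}_{p,q}(K_i,Q,\omega_{\delta_0}(u_0))\geq\widetilde{C}_{p,q}(K_0,Q,\omega_{\delta_0}(u_0))=\mu(\omega_{\delta_0}(u_0))>0,
\end{align*}
contradicting the estimate above.

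The main obstacle will be executing the integrand estimate uniformly: one must combine the blow-up of $h_{K_i}$ on $\omega_{\delta_0}(u_0)$ with the geometric inequality $\rho_{K_i}(u)\geq h_{K_i}(\alpha_{K_i}(u))$ on $\bm{\alpha}^*_{K_i}(\omega_{\delta_0}(u_0))$ and the sign condition $q-p<0$ to convert pointwise smallness into a decay rate for the mass of the $L_p$ dual curvature measure on $\omega_{\delta_0}(u_0)$. A secondary technical point is verifying that $\widetilde{C}_{p,q}(K_0,Q,\cdot)$ is not concentrated in any closed hemisphere, which is precisely the hypothesis that legitimizes invoking Lemma \ref{oboundary}.
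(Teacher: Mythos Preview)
Your argument is correct and in fact more streamlined than the paper's. Both proofs proceed by contradiction, use Lemmas \ref{polarbound} and \ref{dualmvabb} to bound the polars, pass to a subsequence with limit a compact convex set having the origin on its boundary, and then invoke Lemma \ref{oboundary} together with the Portmanteau lower bound $\liminf_i\widetilde{C}_{p,q}(K_i,Q,\omega_{\delta_0}(u_0))>0$. The divergence is in how the contradiction is manufactured. The paper first normalizes to $\bar K_i=\widetilde V_q(K_i,Q)^{-1/q}K_i$ (so $\widetilde V_q(\bar K_i,Q)=1$), and then shows this quantity blows up: for $p>0$ via the identity $\widetilde V_q(\bar K_i,Q)=\int h_{\bar K_i}^p\,d\widetilde C_{p,q}(\bar K_i,Q,\cdot)$, and for $p=0$ via the log-Minkowski inequality \eqref{MIp=0}, requiring a case split. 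You instead bound $\widetilde C_{p,q}(K_i,Q,\omega_{\delta_0}(u_0))$ directly from above using the integral representation \eqref{Lpdcmintegral}: on $\bm{\alpha}^*_{K_i}(\omega_{\delta_0}(u_0))$ one has $h_{K_i}(\alpha_{K_i}(u))\ge c_i$ and $\rho_{K_i}(u)\ge h_{K_i}(\alpha_{K_i}(u))\ge c_i$, so with $q<0$ and $-p\le 0$ the integrand is at most $c_i^{q-p}\rho_Q^{n-q}(u)$. Since $q-p<0$ across the entire range $q<0\le p$, this single estimate forces $\widetilde C_{p,q}(K_i,Q,\omega_{\delta_0}(u_0))\to 0$ and handles both $p>0$ and $p=0$ at once. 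Your route is therefore shorter, avoids the normalization, and does not need the separate appeal to \eqref{MIp=0}.
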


\begin{proof}
Let $\bar{K}_i=\widetilde{V}_q(K_i,Q)^{-\frac{1}{q}}K_i$, then $\widetilde{V}_q(\bar{K}_i,Q)=1$.
By Lemma \ref{polarbound},
there exists $M=M(Q)>1$ such that
\begin{align*}
\bar{K}_i^*\subseteq MB
\end{align*}
for all $i=1,2,\cdots$, that is, the sequence $\{\bar{K}_i^*\}$ is bounded from above.
Together with the Blaschke's selection theorem, without loss of generality, we could assume that $\{\bar{K}^*_{i}\}$ is a convergent sequence whose limit is a compact convex set $K$ with $o\in K$.

Suppose that the sequence $\{\bar{K}_i\}$ is not bounded from above.
Let
\begin{align*}
\bar{R}_i=h_{\bar{K}_i}(v_i)=\max\{h_{\bar{K}_i}(v): v\in S^{n-1}\}, \quad v_i\in S^{n-1},
\end{align*}
then we may assume that $\bar{R}_i\leq \bar{R}_j$ for $i\leq j$ and $\lim_{i\rightarrow+\infty}\bar{R}_i=+\infty$.
Since $S^{n-1}$ is a compact set, without loss of generality, we could assume that the sequence $\{v_i\}$ tends to $v_0\in S^{n-1}$.
By \eqref{supportradialf}, we have
\begin{align*}
\bar{R}^{-1}_iv_i=\rho_{\bar{K}^*_i}(v_i)v_i\in \partial\bar{K}^*_i,
\end{align*}
for $v_i\in S^{n-1}$. Since $\lim_{i\rightarrow+\infty}\bar{R}_i=+\infty$ and the sequence $\{\bar{K}^*_{i}\}$ converges to compact convex set $K$, then we obtain $o\in\partial K$.

Since $\widetilde{C}_{p,q}(K_0,Q,\cdot)$
is not concentrated in any closed hemisphere, by Lemma \ref{oboundary}, there exist a constant $0<\delta_0<1$ and $u_0\in S^{n-1}$ such that
\begin{align*}
\widetilde{C}_{p,q}(K_0,Q,\omega_{\delta_0}(u_0))>0,
\end{align*}
and the sequence $\{\rho_{\bar{K}_i^*}\}$ uniformly converges to $0$ on $\omega_{\delta_0}(u_0)$.

Since that $\{\widetilde{C}_{p,q}(K_i,Q,\cdot)\}$ converges to $\widetilde{C}_{p,q}(K_0,Q,\cdot)$ weakly and that $\omega_{\delta_0}(u_0)$ is an open set of $S^{n-1}$, we have
\begin{align*}
\liminf_{i\rightarrow+\infty} \widetilde{C}_{p,q}(K_i,Q,\omega_{\delta_0}(u_0))\geq \widetilde{C}_{p,q}(K_0,Q,,\omega_{\delta_0}(u_0))>0.
\end{align*}
Hence,we can choose a subsequence of the sequence $\{K_{i}\}$, denoted
again by $\{K_{i}\}$, such that
\begin{align}\label{dualmeasure}
\lim_{i\rightarrow+\infty}  \widetilde{C}_{p,q}(K_i,Q,\omega_{\delta_0}(u_0))\geq \widetilde{C}_{p,q}(K_0,Q,,\omega_{\delta_0}(u_0))>0.
\end{align}

For $q<0<p$,
combining \eqref{dualmeasure} with the fact that $\{\rho_{\bar{K}_i^*}\}$ uniformly converges to $0$ on $\omega_{\delta_0}(u_0)$, we have
\begin{align*}
\int_{\omega_{\delta_0}(u_0)}\rho_{\bar{K}_i^*}^{-p}(u)d\widetilde{C}_{p,q}(K_i,Q,u)\rightarrow+\infty,
\quad\text{as}~i\rightarrow+\infty.
\end{align*}
Together with \eqref{Lpdcmh} and \eqref{dualmixedvb},  we obtain
\begin{align*}
\widetilde{V}_q(\bar{K}_i,Q)=\widetilde{V}_{p,q}(\bar{K}_i,\bar{K}_i,Q)
&=\int_{S^{n-1}}h_{\bar{K}_i}^p(u)d\widetilde{C}_{p,q}(\bar{K}_i,Q,u)\\
&=\widetilde{V}_q(K_i,Q)^{\frac{p-q}{q}}\int_{S^{n-1}}\rho_{\bar{K}_i^*}^{-p}(u)d\widetilde{C}_{p,q}(K_i,Q,u)\\
&\geq m_4^{\frac{p-q}{q}}\int_{\omega_{\delta_0}(u_0)}\rho_{\bar{K}_i^*}^{-p}(u)d\widetilde{C}_{p,q}(K_i,Q,u)\\
&\rightarrow+\infty,
\end{align*}
as $i\rightarrow+\infty$.
This contradicts $\widetilde{V}_q(\bar{K}_i,Q)=1$ for all $i$.
Hence, the sequence $\{\bar{K}_i\}$ is bounded from above for $q<0<p$.

For $p=0$ and $q<0$, from \eqref{MIp=0}, we have
\begin{align}\label{convebsaboveb1-1}
\int_{S^{n-1}}\log h_{\bar{K}_i}(u)d\widetilde{C}_q(\bar{K}_i,Q,u)
\leq\frac{1}{q}\log\frac{1}{\widetilde{V}_q(B,Q)},
\end{align}
for all $i$.
By \eqref{Lpdualcmp=0}, \eqref{dualmeasure} and the fact that $\{\rho_{\bar{K}_i^*}\}$ uniformly converges to $0$ on $\omega_{\delta_0}(u_0)$,
\begin{align}\label{convebsaboveb1-2}
\int_{\omega_{\delta_0}(u_0)}\log\rho_{\bar{K}_i^*}(u)d\widetilde{C}_{q}(K_i,Q,u)\rightarrow-\infty,
\quad\text{as}~i\rightarrow+\infty.
\end{align}
From \eqref{dualmixedvb} and $\bar{K}_i^*\subseteq MB$ for all $i$,
\begin{align*}
\int_{S^{n-1}}\log h_{\bar{K}_i}(u)d\widetilde{C}_q(\bar{K}_i,Q,u)
&=-\frac{1}{\widetilde{V}_q(K_i,Q)}\int_{S^{n-1}}\log\rho_{\bar{K}_i^*}(u)d\widetilde{C}_q(K_i,Q,u)\\
&\geq-\frac{1}{\widetilde{V}_q(K_i,Q)}
\int_{\omega_{\delta_0}(u_0)}\log\rho_{\bar{K}_i^*}(u)d\widetilde{C}_q(K_i,Q,u)-\log M\\
&\geq-\frac{1}{m_4}
\int_{\omega_{\delta_0}(u_0)}\log\rho_{\bar{K}_i^*}(u)d\widetilde{C}_q(K_i,Q,u)-\log M,
\end{align*}
for sufficiently large $i$. Together with \eqref{convebsaboveb1-2}, we obtain
\begin{align*}
\int_{S^{n-1}}\log h_{\bar{K}_i}(u)d\widetilde{C}_q(\bar{K}_i,Q,u)\rightarrow+\infty,
\quad\text{as}~i\rightarrow+\infty.
\end{align*}
This is a contradiction to \eqref{convebsaboveb1-1}.
Hence, the sequence $\{\bar{K}_i\}$ is bounded from above for $p=0$ and $q<0$.

By \eqref{dualmixedvb}, we have
\begin{align*}
K_i=\widetilde{V}_q(K_i,Q)^{\frac{1}{q}}\bar{K}_i\subseteq m_3^{\frac{1}{q}}\bar{K}_i,
\end{align*}
for all $i$.
Therefore, together with the fact the sequence $\{\bar{K}_i\}$ is bounded from above for $q<0\leq p$,
we obtain the sequence $\{K_i\}$ is also bounded from above for $q<0\leq p$.
\end{proof}

\begin{lem}\label{convexbsbelowb1}
Suppose $q<0\leq p$, $Q\in\mathcal{S}_{o}^n$ and $K_i\in\mathcal{K}_{o}^n$ for each $i=0,1,\cdots$.
If $\{\widetilde{C}_{p,q}(K_i,Q,\cdot)\}$ converges to $\widetilde{C}_{p,q}(K_0,Q,\cdot)$ weakly, then
there exists a constant $m_5>0$ such that
\begin{align*}
m_5B\subseteq K_i,
\end{align*}
for all $i=1,2,\cdots$.
\end{lem}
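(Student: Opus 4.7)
The plan is to combine the uniform upper bound on the dual mixed volumes $\widetilde{V}_q(K_i,Q)$ from Lemma \ref{dualmvabb} with the containment estimate for polar bodies in Lemma \ref{polarbound}, and then translate the resulting bound on $K_i^*$ into the desired uniform containment of a ball inside $K_i$ by means of the duality relation \eqref{supportradialf}.

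First, by Lemma \ref{dualmvabb}, the hypothesis that $\{\widetilde{C}_{p,q}(K_i,Q,\cdot)\}$ converges weakly to $\widetilde{C}_{p,q}(K_0,Q,\cdot)$ yields a constant $m_4>0$ such that $\widetilde{V}_q(K_i,Q)\leq m_4$ for every $i\geq 1$. Since $q<0$, this allows us to apply Lemma \ref{polarbound}: there exists $M=M(Q,m_4,q)>1$ such that
\begin{align*}
K_i^*\subseteq M\,B,\qquad i=1,2,\cdots.
\end{align*}
Equivalently, $\rho_{K_i^*}(u)\leq M$ for every $u\in S^{n-1}$ and every $i$.

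Second, by \eqref{supportradialf} we have $h_{K_i}(u)=1/\rho_{K_i^*}(u)$ for each $u\in S^{n-1}$, hence
\begin{align*}
h_{K_i}(u)\geq \frac{1}{M},\qquad u\in S^{n-1},\ i=1,2,\cdots.
\end{align*}
Setting $m_5=1/M>0$, the definition of the support function gives $m_5 B\subseteq K_i$ for all $i\geq 1$, which is the claim.

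The argument involves no new obstacle: all of the work has been front-loaded into Lemma \ref{dualmvabb} (which used the Minkowski-type inequality \eqref{MIp} and, in the case $p=0$, the identity $\widetilde{V}_{0,q}(K_i,B,Q)=\widetilde{V}_q(K_i,Q)$) and into Lemma \ref{polarbound} (which used the assumption $q<0$ and the fact that the spherical Lebesgue measure is not concentrated in any closed hemisphere). The only subtlety is the polar translation $K\supseteq rB\iff K^*\subseteq r^{-1}B$, which is immediate from $h_K=1/\rho_{K^*}$.
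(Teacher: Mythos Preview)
Your proof is correct and follows essentially the same approach as the paper: combine the upper bound on $\widetilde{V}_q(K_i,Q)$ from Lemma~\ref{dualmvabb} with Lemma~\ref{polarbound} to bound $K_i^*$ uniformly, then invoke the duality $h_{K_i}=1/\rho_{K_i^*}$. The only cosmetic difference is that the paper first normalizes to $\bar K_i=\widetilde{V}_q(K_i,Q)^{-1/q}K_i$ (so that $\widetilde{V}_q(\bar K_i,Q)=1$), applies Lemma~\ref{polarbound} to $\bar K_i$, and then scales back using $\widetilde{V}_q(K_i,Q)\le m_4$, arriving at $m_5=m_4^{1/q}M^{-1}$; you instead apply Lemma~\ref{polarbound} directly to $K_i$, which is legitimate since its hypothesis is precisely that $\{\widetilde{V}_q(K_i,Q)\}$ be bounded above.
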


\begin{proof}
Let $\bar{K}_i=\widetilde{V}_q(K_i,Q)^{-\frac{1}{q}}K_i$, then $\widetilde{V}_q(\bar{K}_i,Q)=1$. By Lemma \ref{polarbound},
there exists $M=M(Q)>1$ such that
\begin{align*}
\bar{K}_i^*\subseteq MB,
\end{align*}
for all $i=1,2,\cdots$.
Together with \eqref{dualmixedvb}, we have
\begin{align*}
K_i^*=\widetilde{V}_q(K_i,Q)^{-\frac{1}{q}}\bar{K}_i^*\subseteq \widetilde{V}_q(K_i,Q)^{-\frac{1}{q}}MB
\subseteq (m_4^{-\frac{1}{q}}M)B,
\end{align*}
that is,
\begin{align*}
(m_4^{\frac{1}{q}}M^{-1})B\subseteq K_i,
\end{align*}
for all $i=1,2,\cdots$.
Choosing $m_5=m_4^{\frac{1}{q}}M^{-1}$, we have $m_5B\subseteq K_i$ for all $i=1,2,\cdots$.
\end{proof}

\begin{lem}[\cite{LYZ2018}]\label{dualcmwc1}
Suppose $p,q\in\mathbb{R}$ and $Q\in\mathcal{S}_{o}^n$.  If $K_i\in\mathcal{K}_{o}^n$ with $K_i\rightarrow K_0\in\mathcal{K}_{o}^n$,
then $\widetilde{C}_{p,q}(K_i,Q,\cdot)\rightarrow\widetilde{C}_{p,q}(K_0,Q,\cdot)$, weakly.
\end{lem}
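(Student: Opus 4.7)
The plan is to establish weak convergence of measures by showing that $\int_{S^{n-1}} f \, d\widetilde{C}_{p,q}(K_i, Q, \cdot) \to \int_{S^{n-1}} f \, d\widetilde{C}_{p,q}(K_0, Q, \cdot)$ for every $f \in C(S^{n-1})$. Via the integral representation \eqref{Lpdcmintegral}, this is equivalent to
\begin{align*}
\frac{1}{n}\int_{S^{n-1}} f(\alpha_{K_i}(u))\, h_{K_i}(\alpha_{K_i}(u))^{-p} \rho_{K_i}^q(u) \rho_Q^{n-q}(u)\, du \;\longrightarrow\; \frac{1}{n}\int_{S^{n-1}} f(\alpha_{K_0}(u))\, h_{K_0}(\alpha_{K_0}(u))^{-p} \rho_{K_0}^q(u) \rho_Q^{n-q}(u)\, du.
\end{align*}
The task then reduces to proving $\mathcal{H}^{n-1}$-almost everywhere convergence of the integrands on $S^{n-1}$ and producing a uniform dominating function, after which the dominated convergence theorem finishes the job.

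For the uniform bounds, I would argue that since $K_i \to K_0$ in the Hausdorff metric and $K_0 \in \mathcal{K}_o^n$, there exist constants $0 < a < b$ (independent of $i$ for all large $i$) with $aB \subseteq K_i \subseteq bB$. Hence $a \leq \rho_{K_i}(u) \leq b$ and $a \leq h_{K_i}(v) \leq b$ uniformly on $S^{n-1}$; in particular $a \le h_{K_i}(\alpha_{K_i}(u)) \le b$ wherever $\alpha_{K_i}(u)$ is defined. Combined with the continuity and positivity of $\rho_Q$ and the boundedness of $f$, this furnishes a constant dominating the integrands for all sufficiently large $i$.

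For the pointwise convergence, the key ingredients are the uniform convergence $\rho_{K_i} \to \rho_{K_0}$ (equivalent to Hausdorff convergence on $\mathcal{K}_o^n$), together with the a.e.\ convergence of the radial Gauss maps $\alpha_{K_i}(u) \to \alpha_{K_0}(u)$ at every $u \notin \omega_{K_0}$. The latter rests on the upper semi-continuity of the spherical image map under Hausdorff convergence of convex bodies: any subsequential limit $v$ of $\alpha_{K_i}(u)$ must belong to $\bm{\alpha}_{K_0}(u)$, which equals the singleton $\{\alpha_{K_0}(u)\}$ off the null set $\omega_{K_0}$, forcing full convergence. Once this is granted, the continuity of $f$ and $h_{K_0}$, combined with the uniform convergence of the radial functions and the fact that $h_{K_i} \to h_{K_0}$ uniformly, give pointwise a.e.\ convergence of the complete integrand.

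The main obstacle is the a.e.\ convergence of the radial Gauss maps, which relies on the structure of the singular set $\omega_{K_0}$ (of $\mathcal{H}^{n-1}$-measure zero) and the upper semi-continuity of the normal cone map under Hausdorff convergence. The remaining steps — uniform bounds on radial and support functions, uniform convergence of support functions, and the dominated convergence theorem — are routine. Since the result is attributed to \cite{LYZ2018}, the technically delicate portion can be invoked from that reference; the argument above merely outlines why the reduction is sound.
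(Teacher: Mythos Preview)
The paper does not supply its own proof of this lemma; it is quoted directly from \cite{LYZ2018} and used as a black box. Your outline is correct and is essentially the argument given in \cite{LYZ2018} (and earlier in \cite{HuangLYZ} for the case $p=0$): pass to the radial integral representation, use the uniform two-sided bounds $aB\subseteq K_i\subseteq bB$ coming from $K_i\to K_0\in\mathcal{K}_o^n$ to dominate, and invoke the $\mathcal{H}^{n-1}$-a.e.\ convergence $\alpha_{K_i}(u)\to\alpha_{K_0}(u)$ for $u\notin\omega_{K_0}$ to get pointwise convergence of the integrand.
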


Theorem \ref{thm1-1} is restate as follows:

\begin{thm}\label{thm1-01}
Let $q<0\leq p$, $Q\in\mathcal{S}_{o}^n$ and $K_i\in\mathcal{K}_{o}^n$ for each $i=0,1,\cdots$.
If $\{\widetilde{C}_{p,q}(K_i,Q,\cdot)\}$ converges to $\widetilde{C}_{p,q}(K_0,Q,\cdot)$ weakly,
then $\{K_i\}$ converges to $K_0$ in the Hausdorff metric.
\end{thm}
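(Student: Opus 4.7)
The plan is to combine the uniform boundedness estimates established in Lemma \ref{convebsaboveb1} and Lemma \ref{convexbsbelowb1} with the Blaschke selection theorem, the weak continuity in Lemma \ref{dualcmwc1}, and the uniqueness result Theorem \ref{uniqueness-0}. This is the standard ``uniqueness $+$ compactness $+$ continuity'' pattern used to upgrade weak convergence of measures to Hausdorff convergence of bodies.

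First, I would invoke Lemma \ref{convebsaboveb1} to obtain a constant $R>0$ such that $K_i \subseteq R B$ for all $i$, and Lemma \ref{convexbsbelowb1} to obtain a constant $m_5>0$ such that $m_5 B \subseteq K_i$ for all $i$. In particular the sequence $\{K_i\}$ lies in a compact subset of $\mathcal{K}_o^n$ bounded away from the degenerate (empty-interior or origin-on-boundary) stratum.

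Next, I would run a subsequential argument. Suppose, for contradiction, that $\{K_i\}$ does not converge to $K_0$ in the Hausdorff metric. Then there exist $\varepsilon_0>0$ and a subsequence $\{K_{i_k}\}$ with $d_H(K_{i_k},K_0)\geq \varepsilon_0$ for all $k$. By the Blaschke selection theorem applied to the uniformly bounded sequence $\{K_{i_k}\}\subseteq R B$, one can extract a further subsequence (still denoted $\{K_{i_k}\}$) converging in the Hausdorff metric to some compact convex set $K'$. Because $m_5 B \subseteq K_{i_k}$ for every $k$, the limit satisfies $m_5 B \subseteq K'$, so in particular $K'\in\mathcal{K}_o^n$.

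Then I would apply Lemma \ref{dualcmwc1} to the convergence $K_{i_k} \to K'$: it gives $\widetilde{C}_{p,q}(K_{i_k},Q,\cdot) \to \widetilde{C}_{p,q}(K',Q,\cdot)$ weakly. On the other hand, by hypothesis, $\widetilde{C}_{p,q}(K_{i_k},Q,\cdot) \to \widetilde{C}_{p,q}(K_0,Q,\cdot)$ weakly. By uniqueness of weak limits of finite Borel measures on $S^{n-1}$,
\begin{align*}
\widetilde{C}_{p,q}(K',Q,\cdot)=\widetilde{C}_{p,q}(K_0,Q,\cdot).
\end{align*}
Since $q<p$ (indeed $q<0\leq p$), Theorem \ref{uniqueness-0} applies and yields $K'=K_0$, contradicting $d_H(K_{i_k},K_0)\geq \varepsilon_0$. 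Hence $K_i\to K_0$ in the Hausdorff metric.

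The genuinely nontrivial ingredients are the a priori bounds of Lemmas \ref{convebsaboveb1}--\ref{convexbsbelowb1} and the uniqueness result Theorem \ref{uniqueness-0}; given those, the argument is a routine compactness-plus-uniqueness deduction, so the main obstacle has already been settled upstream. The only subtlety to watch for is that $K'$ must lie in $\mathcal{K}_o^n$ before uniqueness can be invoked, but this is exactly guaranteed by the uniform lower bound $m_5 B \subseteq K_i$.
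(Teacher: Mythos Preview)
Your proposal is correct and follows essentially the same approach as the paper's own proof: both argue by contradiction, extract a convergent subsequence via Blaschke selection using Lemma \ref{convebsaboveb1}, ensure the limit lies in $\mathcal{K}_o^n$ via Lemma \ref{convexbsbelowb1}, apply Lemma \ref{dualcmwc1} to identify the limiting curvature measure, and conclude with the uniqueness Theorem \ref{uniqueness-0}. The only cosmetic difference is that you make the inclusion $m_5 B\subseteq K'$ explicit when passing to the limit, whereas the paper simply cites Lemma \ref{convexbsbelowb1}.
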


\begin{proof}
Assume that the sequence $\{K_i\}$ does not converge to $K_0$.
Then, there exist $\varepsilon_1 >0$ and a subsequence of $\{K_i\}$, denoted again by $\{K_i\}$, such that
\begin{align*}
||h_{K_i}-h_{K_0}||\geq \varepsilon_1
\end{align*}
for all $i$.

By Lemma \ref{convebsaboveb1} and the Blaschke's selection theorem,
we deduce that $\{K_{i}\}$ has a convergent subsequence, denoted again by $\{K_i\}$, with $\lim_{i\rightarrow+\infty}K_{i}=K'_0$.
Then, $K'_0\neq K_0$. From Lemma \ref{convexbsbelowb1}, it follows that $K'_0\in\mathcal{K}_{o}^n$. By Lemma \ref{dualcmwc1},
$\{\widetilde{C}_{p,q}(K_i,Q,\cdot)\}$ converges to $\widetilde{C}_{p,q}(K'_0,Q,\cdot)$ weakly.

Since $\{\widetilde{C}_{p,q}(K_i,Q,\cdot)\}$ converges to $\widetilde{C}_{p,q}(K_0,Q,\cdot)$ weakly, then
\begin{align*}
\widetilde{C}_{p,q}(K'_0,Q,\cdot)=\widetilde{C}_{p,q}(K_0,Q,\cdot).
\end{align*}
By Theorem \ref{uniqueness}, we obtain $K'_0=K_0$. This is a contradiction to $K'_0\neq K_0$.
Therefore,
the sequence $\{K_i\}$ converges to $K_0$ in the Hausdorff metric.
\end{proof}

\begin{remark}
The case $p=0$ and $Q=B$ of Theorem \ref{thm1-01} is due to Wang, Fang and Zhou \cite{WFZ}.
\end{remark}

By the arguments as in the proof of Theorem \ref{thm1-01}, we obtain the following theorems.

\begin{thm}\label{thm1-2}
Let $q_0<0\leq p$, $Q\in\mathcal{S}_{o}^n$ and $K_i\in\mathcal{K}_{o}^n$ for each $i=0,1,\cdots$.
If $\widetilde{C}_{p,q_i}(K_i,Q,\cdot)=\widetilde{C}_{p,q_0}(K_0,Q,\cdot)$ with $\lim_{i\rightarrow+\infty}q_i=q_0$,
then $\{K_i\}$ converges to $K_0$ in the Hausdorff metric.
\end{thm}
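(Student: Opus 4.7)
The plan is to follow the template of the proof of Theorem \ref{thm1-01}, replacing the weak convergence hypothesis with the equality assumption and dealing with the subtlety that the indices $q_i$ are now varying. I will argue by contradiction: suppose the sequence $\{K_i\}$ does not converge to $K_0$ in the Hausdorff metric; then there exist $\varepsilon_1>0$ and a subsequence (still denoted $\{K_i\}$) with $\|h_{K_i}-h_{K_0}\|\geq\varepsilon_1$ for every $i$. The goal is to extract a convergent subsequence whose limit coincides with $K_0$ (by uniqueness), yielding the contradiction.

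The first task is to establish uniform bounds on the sequence $\{K_i\}$ of the same type as Lemmas \ref{dualmvabb}, \ref{convebsaboveb1}, and \ref{convexbsbelowb1}. Since $\widetilde{C}_{p,q_i}(K_i,Q,\cdot)=\widetilde{C}_{p,q_0}(K_0,Q,\cdot)$, the total masses $\widetilde{V}_{p,q_i}(K_i,B,Q)$ are all equal to the fixed positive constant $\widetilde{V}_{p,q_0}(K_0,B,Q)$, so the analogue of Lemma \ref{pmixedvolumeb} is immediate. The estimates in Lemma \ref{dualmvabb} go through because $q_i\to q_0<0$ and $p\geq0$ force the exponent $q_i/(q_i-p)$ to stay bounded away from $0$ and $\infty$ for $i$ large, while the inequality \eqref{MIp} and Lemma \ref{polarbound} applied with index $q_i$ produce constants that depend continuously on $q_i$. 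Hence there exist $0<m_3\leq m_4$, independent of $i$, with $m_3\leq\widetilde{V}_{q_i}(K_i,Q)\leq m_4$. The upper-bound argument of Lemma \ref{convebsaboveb1} then applies almost verbatim: since $\widetilde{C}_{p,q_i}(K_i,Q,\omega_{\delta_0}(u_0))=\widetilde{C}_{p,q_0}(K_0,Q,\omega_{\delta_0}(u_0))>0$ by the equality hypothesis, one reaches the same contradiction from either the direct integral bound (when $p>0$) or the log-Minkowski inequality \eqref{MIp=0} (when $p=0$). Combined with Lemma \ref{convexbsbelowb1}'s argument (which uses only the polar bound), we obtain a constant $m_5>0$ with $m_5 B\subseteq K_i\subseteq M' B$ for all $i$.

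By Blaschke's selection theorem, the (further) subsequence $\{K_i\}$ has a convergent subsubsequence $K_{i_j}\to K'_0$ with $K'_0\in\mathcal{K}_o^n$ (thanks to the lower bound $m_5 B\subseteq K_i$), and $K'_0\neq K_0$ by the standing assumption. The key additional ingredient required now is a \emph{joint} continuity statement: if $K_{i_j}\to K'_0$ in the Hausdorff metric and $q_{i_j}\to q_0$, then $\widetilde{C}_{p,q_{i_j}}(K_{i_j},Q,\cdot)\to\widetilde{C}_{p,q_0}(K'_0,Q,\cdot)$ weakly. This is an easy extension of Lemma \ref{dualcmwc1}: using the integral representation \eqref{Lpdcmintegral}, one applies the dominated convergence theorem together with the uniform convergence of $h_{K_{i_j}}$, $\rho_{K_{i_j}}$, $\rho^{q_{i_j}}_{K_{i_j}}$ and $\rho^{n-q_{i_j}}_Q$ to the corresponding functions for $K'_0$ and $q_0$, and the Lebesgue-negligibility of the set where the reverse radial Gauss map is multivalued. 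Once the joint continuity is in hand, the hypothesis $\widetilde{C}_{p,q_{i_j}}(K_{i_j},Q,\cdot)=\widetilde{C}_{p,q_0}(K_0,Q,\cdot)$ forces $\widetilde{C}_{p,q_0}(K'_0,Q,\cdot)=\widetilde{C}_{p,q_0}(K_0,Q,\cdot)$, and Theorem \ref{uniqueness} yields $K'_0=K_0$, contradicting $K'_0\neq K_0$.

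The main obstacle is verifying this joint continuity of $(K,q)\mapsto\widetilde{C}_{p,q}(K,Q,\cdot)$ in a form strong enough to pass weak limits; the existing Lemma \ref{dualcmwc1} only handles a fixed $q$. I would formulate it as a separate lemma proved by the same absolute-continuity/dominated-convergence scheme used in \cite{LYZ2018} for Lemma \ref{dualcmwc1}, noting that the uniform lower bound $m_5 B\subseteq K_{i_j}$ gives a uniform positive lower bound on $\rho_{K_{i_j}}$, so the integrand $h_{K_{i_j}}(\alpha_{K_{i_j}}(u))^{-p}\rho^{q_{i_j}}_{K_{i_j}}(u)\rho^{n-q_{i_j}}_Q(u)$ stays uniformly bounded and admits a pointwise limit almost everywhere on $S^{n-1}$. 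Everything else in the proof is a routine transcription of the argument for Theorem \ref{thm1-01}, with fixed-$q$ estimates replaced by uniform-in-$i$ estimates that exploit $q_i\to q_0$.
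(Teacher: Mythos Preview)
Your proposal is correct and follows exactly the route the paper indicates: the paper gives no separate proof of Theorem \ref{thm1-2} beyond the one-line remark ``By the arguments as in the proof of Theorem \ref{thm1-01}, we obtain the following theorems.'' You have faithfully reproduced that template and, in fact, gone further by explicitly isolating the two points the paper leaves implicit---the uniformity in $i$ of the constants in Lemmas \ref{polarbound}--\ref{convexbsbelowb1} (via continuous dependence on $q_i\to q_0$) and the joint weak continuity of $(K,q)\mapsto\widetilde{C}_{p,q}(K,Q,\cdot)$ needed to pass to the limit along a subsequence with varying index.
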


\begin{thm}\label{thm1-3}
Let $q<0< p_0$, $Q\in\mathcal{S}_{o}^n$ and $K_i\in\mathcal{K}_{o}^n$ for each $i=0,1,\cdots$.
If $\widetilde{C}_{p_i,q}(K_i,Q,\cdot)=\widetilde{C}_{p_0,q}(K_0,Q,\cdot)$ with $\lim_{i\rightarrow+\infty}p_i=p_0$,
then $\{K_i\}$ converges to $K_0$ in the Hausdorff metric.
\end{thm}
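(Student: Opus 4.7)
The plan is to mirror the contradiction argument behind Theorem \ref{thm1-01}, adapted to a varying $p$. Since $\widetilde C_{p_i,q}(K_i,Q,\cdot)$ equals the \emph{fixed} measure $\widetilde C_{p_0,q}(K_0,Q,\cdot)$ for every $i$, the total mass $\widetilde V_{p_i,q}(K_i,B,Q)$ is a positive constant, giving the analogue of Lemma \ref{pmixedvolumeb} for free. Because $p_i\to p_0>0$, I may fix a tail along which $p_i$ stays in a compact interval $[a,b]\subset(0,\infty)$, so the regime $q<0<p$ persists uniformly along the sequence.

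With this in hand, I would replay the estimates of Lemmas \ref{dualmvabb}--\ref{convexbsbelowb1} and verify that all constants depend continuously on $p\in[a,b]$. Writing $\bar K_i=\widetilde V_q(K_i,Q)^{-1/q}K_i$ and invoking Lemma \ref{polarbound}, the bound $\bar K_i^*\subseteq MB$ yields
\begin{align*}
\widetilde V_{p_i,q}(K_i,B,Q)\leq M^{p_i}\,\widetilde V_q(K_i,Q)^{(q-p_i)/q},
\end{align*}
hence a uniform positive lower bound on $\widetilde V_q(K_i,Q)$, while the Minkowski-type inequality \eqref{MIp} supplies a uniform upper bound. The upper-bound step for $\{K_i\}$ itself proceeds as in Lemma \ref{convebsaboveb1}: if $\{\bar K_i\}$ were unbounded, Lemma \ref{oboundary} applied to the fixed non-degenerate measure $\widetilde C_{p_0,q}(K_0,Q,\cdot)$ would furnish a cap $\omega_{\delta_0}(u_0)$ of positive measure on which $\rho_{\bar K_i^*}\to 0$ uniformly, and, since $p_i\geq a>0$,
\begin{align*}
\widetilde V_q(\bar K_i,Q)=\widetilde V_q(K_i,Q)^{(p_i-q)/q}\int_{S^{n-1}}\rho_{\bar K_i^*}^{-p_i}(u)\,d\widetilde C_{p_0,q}(K_0,Q,u)\to+\infty,
\end{align*}
contradicting $\widetilde V_q(\bar K_i,Q)=1$. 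Lemma \ref{convexbsbelowb1} then provides $m_5 B\subseteq K_i$ uniformly.

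The main obstacle is the joint-continuity step: Lemma \ref{dualcmwc1} is only stated for fixed $p$, yet I need $\widetilde C_{p_{i_k},q}(K_{i_k},Q,\cdot)\to\widetilde C_{p_0,q}(K'_0,Q,\cdot)$ weakly for any subsequential Blaschke limit $K_{i_k}\to K'_0\in\mathcal K_o^n$. I would deduce this from the decomposition $d\widetilde C_{p_{i_k},q}(K_{i_k},Q,\cdot)=h_{K_{i_k}}^{-p_{i_k}}\,d\widetilde C_q(K_{i_k},Q,\cdot)$: the uniform convergence $h_{K_{i_k}}\to h_{K'_0}$ together with the uniform positive lower bound on $h_{K_{i_k}}$ coming from $m_5 B\subseteq K_{i_k}$ implies $h_{K_{i_k}}^{-p_{i_k}}\to h_{K'_0}^{-p_0}$ uniformly on $S^{n-1}$, while Lemma \ref{dualcmwc1} with $p=0$ supplies the weak convergence $\widetilde C_q(K_{i_k},Q,\cdot)\to\widetilde C_q(K'_0,Q,\cdot)$. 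This is enough to pass to the limit in $\int_{S^{n-1}}\varphi(v)\,d\widetilde C_{p_{i_k},q}(K_{i_k},Q,v)$ for every $\varphi\in C(S^{n-1})$. The resulting identity $\widetilde C_{p_0,q}(K'_0,Q,\cdot)=\widetilde C_{p_0,q}(K_0,Q,\cdot)$, combined with the uniqueness Theorem \ref{uniqueness}, forces $K'_0=K_0$, contradicting the assumed failure of convergence.
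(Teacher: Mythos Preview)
Your proposal is correct and follows exactly the route the paper intends: the paper's own ``proof'' of Theorem \ref{thm1-3} is simply the sentence ``By the arguments as in the proof of Theorem \ref{thm1-01},'' and you have faithfully replayed that argument with $p$ replaced by $p_i\to p_0$, checking that every constant depends only on $p\in[a,b]\subset(0,\infty)$. The one point you flag---that Lemma \ref{dualcmwc1} is stated for fixed $p$ and hence a joint-continuity argument is needed along the subsequence---is a genuine detail the paper does not spell out, and your treatment via the factorization $d\widetilde C_{p_{i_k},q}=h_{K_{i_k}}^{-p_{i_k}}\,d\widetilde C_q$ together with the uniform bounds $m_5\le h_{K_{i_k}}\le m_8 m_7^{1/p}$ is the natural and correct way to close that gap.
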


Now, we consider the continuity of the solution to the $L_p$ dual Minkowski problem for $p\geq 1$ and $0\leq q<p$.

\begin{lem}\label{uniformlyc}
Suppose $p\geq 1$, $q\in\mathbb{R}$, $Q\in\mathcal{S}_{o}^n$ and $K_i\in\mathcal{K}_{o}^n$ for each $i=0,1,\cdots$.
Let
\begin{align*}
f_i(u)=\int_{S^{n-1}}(u\cdot v)_+^pd\widetilde{C}_{p,q}(K_i,Q,v)
\end{align*}
for $u\in S^{n-1}$ and $i=0,1,\cdots$.
If $\{\widetilde{C}_{p,q}(K_i,Q,\cdot)\}$ converges to $\widetilde{C}_{p,q}(K_0,Q,\cdot)$ weakly,
then the sequence $\{f_i\}$ converges to $f_0$ uniformly on $S^{n-1}$.

\end{lem}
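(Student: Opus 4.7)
The plan is to establish uniform convergence by combining pointwise convergence (from weak convergence of the measures) with uniform equicontinuity of the family $\{f_i\}$, then appealing to compactness of $S^{n-1}$.

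First I would verify pointwise convergence. For each fixed $u\in S^{n-1}$, the map $v\mapsto (u\cdot v)_+^p$ is continuous on $S^{n-1}$ whenever $p\geq 1$, since the scalar function $x\mapsto x_+^p$ is continuous. Therefore the weak convergence $\widetilde{C}_{p,q}(K_i,Q,\cdot)\to \widetilde{C}_{p,q}(K_0,Q,\cdot)$ yields $f_i(u)\to f_0(u)$ for every $u\in S^{n-1}$. In particular, taking $u$ to be an arbitrary test point, and noting that weak convergence also forces convergence of total masses, the sequence $\{\widetilde{C}_{p,q}(K_i,Q,S^{n-1})\}$ is bounded by some constant $M>0$ independent of $i$.

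Next I would prove uniform equicontinuity of $\{f_i\}$. The key estimate is a Lipschitz bound for the map $x\mapsto x_+^p$ on $[-1,1]$: for $p\geq 1$ this function has derivative $px_+^{p-1}$, which is bounded in absolute value by $p$ on $[-1,1]$, so the mean value theorem gives
\begin{align*}
\bigl|(u_1\cdot v)_+^p-(u_2\cdot v)_+^p\bigr|\leq p\,|u_1\cdot v-u_2\cdot v|\leq p\,|u_1-u_2|
\end{align*}
for all $u_1,u_2,v\in S^{n-1}$. Integrating against $\widetilde{C}_{p,q}(K_i,Q,\cdot)$ then yields
\begin{align*}
|f_i(u_1)-f_i(u_2)|\leq p\,|u_1-u_2|\,\widetilde{C}_{p,q}(K_i,Q,S^{n-1})\leq pM\,|u_1-u_2|.
\end{align*}
So $\{f_i\}$ is uniformly Lipschitz with constant $pM$, hence uniformly equicontinuous on $S^{n-1}$.

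Finally I would conclude uniform convergence by a standard compactness argument: given $\varepsilon>0$, choose a finite $\delta$-net $\{u_1,\dots,u_N\}$ in $S^{n-1}$ with $\delta=\varepsilon/(3pM)$; by pointwise convergence select $N_0$ so that $|f_i(u_j)-f_0(u_j)|<\varepsilon/3$ for all $i\geq N_0$ and $j=1,\dots,N$; then for any $u\in S^{n-1}$, picking $u_j$ with $|u-u_j|<\delta$ and using the equicontinuity bound twice yields $|f_i(u)-f_0(u)|<\varepsilon$ for all $i\geq N_0$. The only mildly delicate point is the Lipschitz estimate, which is why the hypothesis $p\geq 1$ is essential; the rest is a routine pointwise-plus-equicontinuity-on-a-compact-space argument.
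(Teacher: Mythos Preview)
Your argument is correct but follows a different route from the paper. The paper observes that $f_i^{1/p}$, extended positively homogeneously to $\mathbb{R}^n$, is sublinear (this uses the Minkowski inequality for $L^p$ integrals together with $(a+b)_+\le a_++b_+$, which is where $p\ge 1$ enters), hence is the support function of some convex body; it then invokes the standard fact that pointwise convergence of support functions on $S^{n-1}$ implies uniform convergence. Your approach instead proves a direct Lipschitz estimate for the kernel $x\mapsto x_+^p$ on $[-1,1]$ and feeds it into the usual pointwise-plus-equicontinuity-on-a-compact-set argument. The paper's route is shorter and leans on ready-made convex-geometry machinery; yours is more self-contained and makes the dependence on $p\ge 1$ completely explicit through the derivative bound. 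Either way the conclusion is the same, and your proof is perfectly acceptable.
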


\begin{proof}
It is clear that $f_i^\frac{1}{p}$ is sublinear function for each $i$.
Then, each $f_i^\frac{1}{p}$ is support function of some convex body.
Since $\{\widetilde{C}_{p,q}(K_i,Q,\cdot)\}$ converges to $\widetilde{C}_{p,q}(K_0,Q,\cdot)$ weakly, then
the sequence $\{f_i^\frac{1}{p}\}$ is converges pointwise to $f_0^\frac{1}{p}$ on $S^{n-1}$.
Together with that pointwise and uniform convergence are equivalent for support functions on $S^{n-1}$ in \cite{Schneider},
we obtain the sequence $\{f_i\}$ converges to $f_0$ uniformly on $S^{n-1}$.
\end{proof}

The weak convergence of the sequence of the $L_p$ dual curvature measures for $p\geq 1$ and $0\leq q<p$ implies that the sequence of dual mixed volumes of the corresponding convex bodies is bounded from above and below.

\begin{lem}\label{cdsb-dmvsabb}
Let $p\geq 1$ and $0\leq q<p$, $Q\in\mathcal{S}_{o}^n$ and  $K_i\in\mathcal{K}_{o}^n$ for each $i=0,1,\cdots$.
If $\{\widetilde{C}_{p,q}(K_i,Q,\cdot)\}$ converges to $\widetilde{C}_{p,q}(K_0,Q,\cdot)$ weakly,
then the sequence $\{K_i\}$ is bounded from above, and there exist two constants $0<m_{7}\leq m_{8}$ such that
\begin{align*}
m_{6}\leq \widetilde{V}_q(K_i,Q) \leq m_{7}
\end{align*}
for all $i=1,2,\cdots$.
\end{lem}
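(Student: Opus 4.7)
The plan is to couple the uniform convergence from Lemma \ref{uniformlyc} with the Minkowski-type inequality \eqref{MIp} to prove all three assertions: uniform boundedness of $\{K_i\}$ from above, and two-sided bounds on $\widetilde V_q(K_i,Q)$.

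First I would establish boundedness of $\{K_i\}$ from above by contradiction. Suppose otherwise; after passing to a subsequence $R_i := \max_{v\in S^{n-1}} h_{K_i}(v) = h_{K_i}(v_i) \to \infty$, and there exists $x_i\in K_i$ with $x_i\cdot v_i=R_i$, so $|x_i|\geq R_i$. Convexity of $K_i$ together with $o\in K_i$ gives $[o,x_i]\subseteq K_i$, hence $h_{K_i}(v)\geq R_i(\hat x_i\cdot v)_+$ where $\hat x_i := x_i/|x_i|$; by compactness I may also assume $\hat x_i\to u_0\in S^{n-1}$. By Lemma \ref{uniformlyc}, $f_i\to f_0$ uniformly on $S^{n-1}$, and since $\widetilde C_{p,q}(K_0,Q,\cdot)$ is not concentrated on any closed hemisphere (as $K_0\in\mathcal{K}_o^n$), $c_0:=\min_{S^{n-1}}f_0>0$, so $f_i(\hat x_i)\geq c_0/2$ for large $i$. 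Integrating the pointwise lower bound against $\widetilde C_{p,q}(K_i,Q,\cdot)$ yields
\[
\widetilde V_q(K_i,Q) = \widetilde V_{p,q}(K_i,K_i,Q) = \int_{S^{n-1}} h_{K_i}^p\, d\widetilde C_{p,q}(K_i,Q,\cdot) \geq R_i^p f_i(\hat x_i) \geq \tfrac{c_0}{2} R_i^p.
\]

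For a matching upper bound: if $q=0$, this already contradicts $\widetilde V_0(K_i,Q)=V(Q)$; if $0<q<p$, \eqref{MIp} applied with $K=K_0$, $L=K_i$ yields $\widetilde V_q(K_i,Q)^{p/q}\leq \widetilde V_{p,q}(K_0,K_i,Q)\,\widetilde V_q(K_0,Q)^{p/q-1}$, and the crude pointwise estimate $\widetilde V_{p,q}(K_0,K_i,Q) = \int h_{K_i}^p\, d\widetilde C_{p,q}(K_0,Q,\cdot) \leq R_i^p\, \widetilde V_{p,q}(K_0,B,Q)$ gives $\widetilde V_q(K_i,Q)\leq C R_i^q$ for some constant $C=C(K_0,Q)$. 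Combining the two bounds produces $R_i^{p-q}\leq 2C/c_0$, which contradicts $R_i\to\infty$ since $p>q$. Therefore $\{K_i\}\subseteq R'B$ for some $R'>0$.

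The upper bound $\widetilde V_q(K_i,Q) \leq (R')^q \cdot \tfrac{1}{n}\int \rho_Q^{n-q}du$ then follows from $\rho_{K_i}\leq R'$ together with $q\geq 0$. For the lower bound: if $q=0$ then $\widetilde V_0(K_i,Q)=V(Q)$ is constant; if $0<q<p$, \eqref{MIp} with $K=K_i$, $L=K_0$ gives $\widetilde V_q(K_i,Q)^{(p-q)/(pq)} \geq \widetilde V_q(K_0,Q)^{1/q}\,\widetilde V_{p,q}(K_i,K_0,Q)^{-1/p}$, and weak convergence together with continuity of $h_{K_0}^p$ on $S^{n-1}$ yields $\widetilde V_{p,q}(K_i,K_0,Q) \to \widetilde V_q(K_0,Q)$, so this quantity remains bounded and the right-hand side stays bounded below by a positive constant. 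The main obstacle is the coupling in the boundedness step: the geometric lower bound $\widetilde V_q(K_i,Q)\gtrsim R_i^p$ (from Lemma \ref{uniformlyc} and the non-concentration of $\widetilde C_{p,q}(K_0,Q,\cdot)$) must be played against the inequality-driven upper bound of order $R_i^q$ coming from \eqref{MIp}, and the strict gap $p > q$ is exactly what precludes $R_i\to\infty$.
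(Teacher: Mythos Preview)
Your argument is correct, but it proceeds differently from the paper's. The paper avoids contradiction entirely for the boundedness step: it normalizes $K_i' := \widetilde V_q(K_i,Q)^{-1/p}K_i$, observes via \eqref{Lpdcmh} that $\widetilde C_{p,q}(K_i',Q,\cdot)=\widetilde V_q(K_i',Q)\,\widetilde C_{p,q}(K_i,Q,\cdot)$, and then the same pointwise bound $h_{K_i'}(v)\geq R_i'(u_i'\cdot v)_+$ together with the identity $\int h_{K_i'}^p\,d\widetilde C_{p,q}(K_i',Q,\cdot)=\widetilde V_q(K_i',Q)$ and the uniform lower bound $f_i\geq 1/m_8^p$ from Lemma~\ref{uniformlyc} yields $R_i'\leq m_8$ \emph{directly}. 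From $K_i'\subseteq m_8B$ the upper bound on $\widetilde V_q(K_i,Q)$ and the boundedness of $K_i$ follow immediately; inequality \eqref{MIp} is used only once, with $L=B$ and Lemma~\ref{pmixedvolumeb}, to get the lower bound on $\widetilde V_q(K_i,Q)$. Your route instead squeezes $\widetilde V_q(K_i,Q)$ between $c\,R_i^p$ (from Lemma~\ref{uniformlyc}) and $C\,R_i^q$ (from \eqref{MIp} with $K=K_0$, $L=K_i$) to force $R_i^{p-q}$ bounded, and then invokes \eqref{MIp} a second time with $K=K_i$, $L=K_0$ plus weak convergence for the lower bound. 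Both are valid; the paper's normalization is slightly cleaner and needs \eqref{MIp} only once, while your squeeze argument makes the role of the gap $p>q$ more transparent.
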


\begin{proof}
For $0<q<p$, by \eqref{MIp},
\begin{align*}
\widetilde{V}_{p,q}(K_i,B,Q)^q\geq \widetilde{V}_q(K_i,Q)^{q-p}\widetilde{V}_q(B,Q)^p,
\end{align*}
for all $i$.
By Lemma \ref{pmixedvolumeb}, we have
\begin{align*}
\widetilde{V}_q(K_i,Q)\geq \widetilde{V}_{p,q}(K_i,B,Q)^\frac{q}{q-p}\widetilde{V}_q(B,Q)^\frac{p}{p-q}
\geq m_2^\frac{q}{q-p}\widetilde{V}_q(B,Q)^\frac{p}{p-q},
\end{align*}
for all $i$. Let $m_6=m_2^\frac{q}{q-p}\widetilde{V}_q(B,Q)^\frac{p}{p-q}$ for $0\leq q<p$.

Since $\widetilde{C}_{p,q}(K_0,Q,\cdot)$ is not concentrated on a great subsphere of $S^{n-1}$, then
$f_0(u)=\int_{S^{n-1}}(u\cdot v)_+^pd\widetilde{C}_{p,q}(K_0,Q,v)$ has a positive lower bound on $S^{n-1}$.
By Lemma \ref{uniformlyc}, the sequence $\{f_i\}$ has a positive lower bound on $S^{n-1}$,
that is, there exists a constant $m_{8}>0$ such that
\begin{align*}
\int_{S^{n-1}}(u\cdot v)_+^pd\widetilde{C}_{p,q}(K_i,Q,v)\geq\frac{1}{m_{8}^p}
\end{align*}
for all $u\in S^{n-1}$ and $i=1,2,\cdots$.
Let $K_i'=\widetilde{V}_q(K_i,Q)^{-\frac{1}{p}}K_i$, then $K_i=\widetilde{V}_q(K'_i,Q)^{\frac{1}{p-q}}K'_i$.
Thus, by \eqref{Lpdcmh} and \eqref{Lpdualcm2}, we have
\begin{align*}
\frac{{R'}_i^p}{m_{8}^p}\leq {R'}_i^p\int_{S^{n-1}}(u'_i\cdot v)_+^pd\widetilde{C}_{p,q}(K_i,Q,v)
&=\frac{{R'}_i^p}{\widetilde{V}_q(K'_i,Q)}\int_{S^{n-1}}(u'_i\cdot v)_+^pd\widetilde{C}_{p,q}(K'_i,Q,v)\\
&\leq\frac{1}{\widetilde{V}_q(K'_i,Q)}\int_{S^{n-1}}h_{K'_i}(v)^pd\widetilde{C}_{p,q}(K'_i,Q,v)\\
&=\frac{1}{\widetilde{V}_q(K'_i,Q)}\int_{S^{n-1}}d\widetilde{C}_{q}(K'_i,Q,v)\\
&=1,
\end{align*}
where $R'_i=\rho_{K'_i}(u'_i)=\max\{\rho_{K'_i}(u): u\in S^{n-1}\}$ for $u'_i\in S^{n-1}$ and $i=1,2,\cdots$.
It follows that ${R'}_i\leq m_{8}$, that is,
\begin{align}\label{cdsb-dmvsabb-1}
K_i'\subseteq m_{8}B,
\end{align}
for all $i=1,2,\cdots$.
Since $K_i'=\widetilde{V}_q(K_i,Q)^{-\frac{1}{p}}K_i$, we obtain
\begin{align*}
\widetilde{V}_q(K_i,Q)^\frac{p-q}{p}
=\widetilde{V}_q(K'_i,Q)\leq m_{8}^q\widetilde{V}_q(B,Q),
\end{align*}
that is,
\begin{align*}
\widetilde{V}_q(K_i,Q)
\leq \big(m_{8}^q\widetilde{V}_q(B,Q)\big)^\frac{p}{p-q},
\end{align*}
for all $i$.
Let $m_7=\big(m_{8}^q\widetilde{V}_q(B,Q)\big)^\frac{p}{p-q}$  for $0\leq q<p$.

By $K_i'=\widetilde{V}_q(K_i,Q)^{-\frac{1}{p}}K_i$ and \eqref{cdsb-dmvsabb-1}, we have
\begin{align*}
K_i=\widetilde{V}_q(K_i,Q)^{\frac{1}{p}}K'_i\subseteq  m_{8}\widetilde{V}_q(K_i,Q)^{\frac{1}{p}}B
\subseteq  m_{8}m_7^\frac{1}{p}B,
\end{align*}
for all $i=1,2,\cdots$.
Therefore, the sequence $\{K_i\}$ is bounded from above.
\end{proof}

\begin{lem}\label{Kinteriorp}
If $p\geq 1$ and $0\leq q<p$, $Q\in\mathcal{S}_{o}^n$ and the sequence $\{K_i\}\subseteq \mathcal{K}_{o}^n$ converges to compact convex body $K$ in $\mathbb{R}^n$ with that $\{\widetilde{C}_{p,q}(K_i,Q,S^{n-1})\}$ is bounded from above, then
$K\in \mathcal{K}_{o}^n$.
\end{lem}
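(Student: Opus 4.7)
The plan is to argue by contradiction: assuming the origin lies on $\partial K$, I will show that the sequence $\{\widetilde{C}_{p,q}(K_i,Q,S^{n-1})\}$ must be unbounded, contradicting the hypothesis. Since $K$ is given to be a compact convex body (hence has nonempty interior) and each $K_i$ contains $o$, the only thing left to rule out is $o\in\partial K$.

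Assuming $o\in\partial K$, there exists $u_0\in S^{n-1}$ with $h_K(u_0)=0$. By uniform convergence of support functions, $h_{K_i}(u_0)\to 0$. For any $\delta\in(0,1)$ and any $v\in\omega_\delta(u_0)$, the fact that $\rho_{K_i}(v)v\in K_i$ gives
\begin{align*}
h_{K_i}(u_0)\ \geq\ (v\cdot u_0)\,\rho_{K_i}(v)\ >\ \delta\,\rho_{K_i}(v),
\end{align*}
so $\rho_{K_i}\to 0$ uniformly on $\omega_\delta(u_0)$. This is precisely the mechanism of Lemma \ref{oboundary}; fix any $\delta_0\in(0,1)$.

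Next I control $\widetilde{C}_{p,q}(K_i,Q,S^{n-1})$ from below. By the integral representation \eqref{Lpdcmintegral},
\begin{align*}
\widetilde{C}_{p,q}(K_i,Q,S^{n-1})
=\frac{1}{n}\int_{S^{n-1}}h_{K_i}(\alpha_{K_i}(u))^{-p}\,\rho_{K_i}^{q}(u)\,\rho_Q^{n-q}(u)\,du.
\end{align*}
For a.e.\ $u\in S^{n-1}$ the boundary point $\rho_{K_i}(u)u$ lies on the support hyperplane $H_{K_i}(\alpha_{K_i}(u))$, hence $h_{K_i}(\alpha_{K_i}(u))=(u\cdot\alpha_{K_i}(u))\,\rho_{K_i}(u)\le \rho_{K_i}(u)$. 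Since $p\ge 1>0$, this yields $h_{K_i}(\alpha_{K_i}(u))^{-p}\ge \rho_{K_i}^{-p}(u)$, and therefore
\begin{align*}
\widetilde{C}_{p,q}(K_i,Q,S^{n-1})\ \ge\ \frac{c_Q}{n}\int_{\omega_{\delta_0}(u_0)}\rho_{K_i}^{q-p}(u)\,du,
\end{align*}
where $c_Q:=\min_{u\in S^{n-1}}\rho_Q^{n-q}(u)>0$ (using $Q\in\mathcal S_o^n$).

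Because $q-p<0$ and $\rho_{K_i}\to 0$ uniformly on the open cap $\omega_{\delta_0}(u_0)$, the integrand $\rho_{K_i}^{q-p}$ tends to $+\infty$ uniformly on $\omega_{\delta_0}(u_0)$, so the right-hand side tends to $+\infty$. This contradicts the assumed upper bound on $\widetilde{C}_{p,q}(K_i,Q,S^{n-1})$, so $o\in\operatorname{int}K$ and $K\in\mathcal{K}_o^n$. The only step requiring any care is the inequality $h_{K_i}(\alpha_{K_i}(u))\le \rho_{K_i}(u)$ and the resulting reversal under the exponent $-p$; this is why the hypothesis $p>0$ (and indeed $q<p$) enters in an essential way.
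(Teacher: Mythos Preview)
Your argument is correct and follows essentially the same route as the paper's proof: both assume $o\in\partial K$, use the uniform convergence $\rho_{K_i}\to 0$ on a spherical cap $\omega_{\delta_0}(u_0)$, rewrite $h_{K_i}(\alpha_{K_i}(u))=(u\cdot\alpha_{K_i}(u))\rho_{K_i}(u)\le \rho_{K_i}(u)$ in the integral representation \eqref{Lpdcmintegral}, and conclude that the total measure blows up because $q-p<0$. The only cosmetic differences are that the paper invokes Lemma~\ref{oboundary} (with $\mu$ the spherical Lebesgue measure) rather than rederiving the cap estimate, and writes the lower bound for $\rho_Q^{n-q}$ as $\min\{r_Q^{n-q},R_Q^{n-q}\}$ instead of your $c_Q$.
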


\begin{proof}
By $Q\in\mathcal{S}_{o}^n$, there exist two constants $0<r_{Q}<R_{Q}$ such that
\begin{align*}
r_{Q}<\rho_{Q}(u)<R_{Q},
\end{align*}
for all $u\in S^{n-1}$.
Since the sequence $\{K_i\}\subseteq \mathcal{K}_{o}^n$ converges to compact convex body $K$, then $o\in K$.

Suppose that $o\in\partial K$. Since the spherical Lebesgue measure is not concentrated
in any closed hemisphere of $S^{n-1}$, then,
by Lemma \ref{oboundary}, we obtain that
there exist a constant $0<\delta_0<1$ and $u_0\in S^{n-1}$ such that the spherical Lebesgue measure of $\omega_{\delta_0}(u_0)\subseteq S^{n-1}$ is greater than 0
and the sequence $\{\rho_{K_{i}}\}$ converges to $0$ uniformly on $\omega_{\delta_0}(u_0)$.
By \eqref{Lpdcmintegral}, we have
\begin{align*}
\widetilde{C}_{p,q}(K_i,Q,S^{n-1})
&=\frac{1}{n}\int_{S^{n-1}}h_{K_i}(\alpha_{K_i}(u))^{-p}\rho^q_{K_i}(u)\rho^{n-q}_{Q}(u)du\\
&=\frac{1}{n}\int_{S^{n-1}}(u\cdot\alpha_{K_i}(u))^{-p}\rho^{q-p}_{K_i}(u)\rho^{n-q}_{Q}(u)du\\
&\geq\frac{\min\{r^{n-q}_{Q}, R^{n-q}_{Q}\}}{n}\int_{S^{n-1}}\rho^{q-p}_{K_i}(u)du\\
&\geq\frac{\min\{r^{n-q}_{Q}, R^{n-q}_{Q}\}}{n}\int_{\omega_{\delta_0}(u_0)}\rho^{q-p}_{K_i}(u)du\\
&\rightarrow+\infty,
\end{align*}
as $i\rightarrow+\infty$. This contradicts $\{\widetilde{C}_{p,q}(K_i,Q,S^{n-1})\}$ is bounded from above. Therefore, we obtain $o\in\text{int}K$, that is, $K\in \mathcal{K}_{o}^n$.
\end{proof}

Theorem \ref{thm2-1} is restate as follows:

\begin{thm}\label{thm2-01}
Let $p\geq 1$ and $0\leq q<p$, $Q\in\mathcal{S}_{o}^n$,
and $K_i\in\mathcal{K}_{o}^n$ for each $i=0,1,\cdots$.
If $\{\widetilde{C}_{p,q}(K_i,Q,\cdot)\}$ converges to $\widetilde{C}_{p,q}(K_0,Q,\cdot)$ weakly,
then $\{K_i\}$ converges to $K_0$ in the Hausdorff metric.
\end{thm}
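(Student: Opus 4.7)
The plan is to mirror the contradiction argument used in the proof of Theorem \ref{thm1-01}, replacing the ingredients that were special to the range $q<0$ by the analogous lemmas proved for the range $p\geq 1$, $0\leq q<p$. Assume the conclusion fails. Then there exist $\varepsilon_1>0$ and a subsequence of $\{K_i\}$, which I will again denote by $\{K_i\}$, such that $\|h_{K_i}-h_{K_0}\|\geq\varepsilon_1$ for every $i$.

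First I would invoke Lemma \ref{cdsb-dmvsabb} to conclude that the sequence $\{K_i\}$ is bounded from above. Blaschke's selection theorem then yields a further subsequence, still denoted $\{K_i\}$, converging in the Hausdorff metric to some compact convex set $K_0'$. The inequality $\|h_{K_i}-h_{K_0}\|\geq\varepsilon_1$ is preserved in the limit, so $K_0'\neq K_0$.

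Next I need to show $K_0'\in\mathcal{K}_o^n$, i.e. that $K_0'$ contains the origin in its interior. This is precisely where the argument departs from the $q<0$ case: instead of a direct lower bound on the inradius (Lemma \ref{convexbsbelowb1}), I will use Lemma \ref{Kinteriorp}. Since $\{\widetilde{C}_{p,q}(K_i,Q,\cdot)\}$ converges weakly to $\widetilde{C}_{p,q}(K_0,Q,\cdot)$, the total masses $\widetilde{C}_{p,q}(K_i,Q,S^{n-1})$ converge to $\widetilde{C}_{p,q}(K_0,Q,S^{n-1})$ and are therefore bounded from above. Lemma \ref{Kinteriorp} (applied to the convergent sequence $K_i\to K_0'$) then gives $K_0'\in\mathcal{K}_o^n$.

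With $K_0'\in\mathcal{K}_o^n$ in hand, Lemma \ref{dualcmwc1} yields $\widetilde{C}_{p,q}(K_i,Q,\cdot)\to\widetilde{C}_{p,q}(K_0',Q,\cdot)$ weakly. By uniqueness of weak limits of finite Borel measures on $S^{n-1}$,
\begin{align*}
\widetilde{C}_{p,q}(K_0',Q,\cdot)=\widetilde{C}_{p,q}(K_0,Q,\cdot),
\end{align*}
and the uniqueness Theorem \ref{uniqueness} (applicable since $q<p$) forces $K_0'=K_0$, contradicting $K_0'\neq K_0$. Hence the original sequence $\{K_i\}$ must converge to $K_0$ in the Hausdorff metric.

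The only step that is not a direct transcription from the proof of Theorem \ref{thm1-01} is the verification that $K_0'$ has the origin in its interior, and this is the main obstacle; the key point is that the new lemma \ref{Kinteriorp} provides exactly the integrability blow-up ($\rho_{K_i}^{q-p}$ is not integrable if $\rho_{K_i}$ tends to zero on a set of positive measure, since $q-p<0$) needed to exclude $o\in\partial K_0'$, which plays the role that the lower bound on $K_i$ played in the $q<0$ argument.
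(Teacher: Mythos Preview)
Your proposal is correct and follows essentially the same approach as the paper's proof: a contradiction argument using Lemma \ref{cdsb-dmvsabb} for boundedness, Blaschke selection for a subsequential limit $K_0'$, Lemma \ref{Kinteriorp} (with the total-mass bound from weak convergence, which is the content of Lemma \ref{pmixedvolumeb}) to ensure $K_0'\in\mathcal{K}_o^n$, Lemma \ref{dualcmwc1} for weak continuity, and finally Theorem \ref{uniqueness} for the contradiction. Your explanatory paragraph about why Lemma \ref{Kinteriorp} replaces the inradius bound is accurate and matches the paper's intended logic.
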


\begin{proof}
If the sequence $\{K_i\}$ does not converge to $K_0$,
then there exist $\varepsilon_2 >0$ and a subsequence of $\{K_i\}$, denoted again by $\{K_i\}$, such that
\begin{align*}
||h_{K_i}-h_{K_0}||\geq \varepsilon_2,
\end{align*}
for all $i$.

By Lemma \ref{cdsb-dmvsabb} and the Blaschke's selection theorem,
we have $\{K_{i}\}$ has a convergent subsequence, denoted again by $\{K_i\}$, with $\lim_{i\rightarrow+\infty}K_{i}=K'_0$.
Then, $K'_0\neq K_0$ and $o\in K'_0$ by $K_i\in\mathcal{K}_{o}^n$ for all $i$.
By Lemma \ref{pmixedvolumeb} and the fact that $\{\widetilde{C}_{p,q}(K_i,Q,\cdot)\}$ converges to $\widetilde{C}_{p,q}(K_0,Q,\cdot)$ weakly, we have $\{\widetilde{C}_{p,q}(K_i,Q,S^{n-1})\}$ is bounded from above.
Together with Lemma \ref{Kinteriorp}, we obtain $K'_0\in\mathcal{K}_{o}^n$.

From Lemma \ref{dualcmwc1}, we have
$\{\widetilde{C}_{p,q}(K_i,Q,\cdot)\}$ converges to $\widetilde{C}_{p,q}(K'_0,Q,\cdot)$ weakly.
Since $\{\widetilde{C}_{p,q}(K_i,Q,\cdot)\}$ converges to $\widetilde{C}_{p,q}(K_0,Q,\cdot)$ weakly, then
\begin{align*}
\widetilde{C}_{p,q}(K'_0,Q,\cdot)=\widetilde{C}_{p,q}(K_0,Q,\cdot).
\end{align*}
By Theorem \ref{uniqueness}, we obtain $K'_0=K_0$. This contradicts $K'_0\neq K_0$.

Therefore,
the sequence $\{K_i\}$ converges to $K_0$ in the Hausdorff metric.
\end{proof}

\begin{remark}
The case $q=n$ of Theorem \ref{thm2-01} is due to Zhu \cite{Zhu4}.
Moreover, the case $q=0$ and $Q=B$ of Theorem \ref{thm1-1} gives the continuity of the solution to $L_p$ Aleksandrov problem for $p\geq 1$.
\end{remark}

By the arguments as in the proof of Theorem \ref{thm2-01}, we obtain the following theorems.

\begin{thm}\label{thm2-2}
Let $p\geq 1$ and $0<q_0<p$, $Q\in\mathcal{S}_{o}^n$
and $K_i\in\mathcal{K}_{o}^n$ for each $i=0,1,\cdots$.
If $\widetilde{C}_{p,q_i}(K_i,Q,\cdot)=\widetilde{C}_{p,q_0}(K_0,Q,\cdot)$ with $\lim_{i\rightarrow+\infty}q_i=q_0$,,
then $\{K_i\}$ converges to $K_0$ in the Hausdorff metric.
\end{thm}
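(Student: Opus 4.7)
The plan is to mimic the proof of Theorem \ref{thm2-01}, with the adjustment that the index $q$ now varies with $i$. The hypothesis that $\widetilde{C}_{p,q_i}(K_i,Q,\cdot) = \widetilde{C}_{p,q_0}(K_0,Q,\cdot)$ holds as an equality for every $i$ is actually stronger than weak convergence: the total masses are all equal to $\widetilde{V}_{p,q_0}(K_0,B,Q)$, and the sequence of measures, viewed as constant, certainly converges weakly. Argue by contradiction: suppose $\{K_i\}$ does not converge to $K_0$, and extract a subsequence (still denoted $\{K_i\}$) with $\|h_{K_i}-h_{K_0}\|\geq \varepsilon_2$ for some $\varepsilon_2>0$.

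First I would establish a bounded-from-above result analogous to Lemma \ref{cdsb-dmvsabb}. The Minkowski-type inequality \eqref{MIp} applied to each pair $(p,q_i)$ with $0<q_i<p$ yields
\begin{align*}
\widetilde{V}_{q_i}(K_i,Q)\geq \widetilde{V}_{p,q_i}(K_i,B,Q)^{\frac{q_i}{q_i-p}}\widetilde{V}_{q_i}(B,Q)^{\frac{p}{p-q_i}},
\end{align*}
and the bound $K_i'\subseteq m_8 B$ for $K'_i=\widetilde{V}_{q_i}(K_i,Q)^{-1/p}K_i$ follows exactly as in Lemma \ref{cdsb-dmvsabb} since Lemma \ref{uniformlyc} requires only the constant measure on the right-hand side and the support function argument uses only $p\geq 1$. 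Because $q_i\to q_0\in(0,p)$, these exponents and constants can be bounded uniformly in $i$, so $\{K_i\}$ is bounded from above. By the Blaschke selection theorem pass to a subsequence $K_i\to K'_0$, and note $K'_0\neq K_0$.

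Next I would verify $K'_0\in\mathcal{K}_o^n$ by an adaptation of Lemma \ref{Kinteriorp}: if $o\in\partial K'_0$, Lemma \ref{oboundary} gives an open cap $\omega_{\delta_0}(u_0)$ on which $\rho_{K_i}\to 0$ uniformly, and the integral representation \eqref{Lpdcmintegral} forces
\begin{align*}
\widetilde{C}_{p,q_i}(K_i,Q,S^{n-1}) \;\geq\; \frac{\min\{r_Q^{n-q_i},R_Q^{n-q_i}\}}{n}\int_{\omega_{\delta_0}(u_0)}\rho_{K_i}^{q_i-p}(u)\,du \;\longrightarrow\; +\infty,
\end{align*}
using $q_i-p\to q_0-p<0$ uniformly. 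This contradicts the constancy of the total mass. Hence $K'_0\in\mathcal{K}_o^n$.

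The main obstacle is a joint continuity statement replacing Lemma \ref{dualcmwc1}: I must show that $K_i\to K'_0$ and $q_i\to q_0$ imply $\widetilde{C}_{p,q_i}(K_i,Q,\cdot)\to \widetilde{C}_{p,q_0}(K'_0,Q,\cdot)$ weakly. For any $g\in C(S^{n-1})$ the representation \eqref{Lpdcmintegral} gives
\begin{align*}
\int_{S^{n-1}}g\,d\widetilde{C}_{p,q_i}(K_i,Q,\cdot)=\frac{1}{n}\int_{S^{n-1}}g(\alpha_{K_i}(u))\,h_{K_i}(\alpha_{K_i}(u))^{-p}\rho_{K_i}^{q_i}(u)\rho_Q^{n-q_i}(u)\,du.
\end{align*}
Since the $K_i$ are sandwiched between two elements of $\mathcal{K}_o^n$, the functions $\rho_{K_i}$ and $h_{K_i}$ are uniformly bounded above and below away from zero; uniform convergence $\rho_{K_i}\to\rho_{K'_0}$ together with $q_i\to q_0$ gives uniform convergence of $\rho_{K_i}^{q_i}$ and $\rho_Q^{n-q_i}$; and $\alpha_{K_i}\to\alpha_{K'_0}$ almost everywhere on $S^{n-1}$. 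Dominated convergence then yields the desired weak convergence. Combined with the equality $\widetilde{C}_{p,q_i}(K_i,Q,\cdot)=\widetilde{C}_{p,q_0}(K_0,Q,\cdot)$, this gives $\widetilde{C}_{p,q_0}(K'_0,Q,\cdot)=\widetilde{C}_{p,q_0}(K_0,Q,\cdot)$, whence Theorem \ref{uniqueness} forces $K'_0=K_0$, contradicting $K'_0\neq K_0$.
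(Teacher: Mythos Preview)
Your proposal is correct and follows exactly the approach the paper intends: the paper simply writes ``By the arguments as in the proof of Theorem \ref{thm2-01}, we obtain the following theorems,'' leaving the details implicit, and your write-up supplies precisely those details. In particular, the one nontrivial extra step you identify---joint weak continuity of $\widetilde{C}_{p,q}(K,Q,\cdot)$ in $(K,q)$, extending Lemma \ref{dualcmwc1}---is handled correctly via the integral representation \eqref{Lpdcmintegral}, uniform two-sided bounds on $\rho_{K_i}$, almost-everywhere convergence of $\alpha_{K_i}$, and dominated convergence; the remaining adaptations (uniformity in $i$ of the constants in Lemmas \ref{cdsb-dmvsabb} and \ref{Kinteriorp} because $q_i\to q_0\in(0,p)$) are routine.
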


\begin{thm}\label{thm2-3}
Let $p_0>1$ and $0<q<p_0$, $Q\in\mathcal{S}_{o}^n$
and $K_i\in\mathcal{K}_{o}^n$ for each $i=0,1,\cdots$.
If $\widetilde{C}_{p_i,q}(K_i,Q,\cdot)=\widetilde{C}_{p_0,q}(K_0,Q,\cdot)$ with $\lim_{i\rightarrow+\infty}p_i=p_0$,
then $\{K_i\}$ converges to $K_0$ in the Hausdorff metric.
\end{thm}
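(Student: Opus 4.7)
The plan is to follow the contradiction strategy of Theorem \ref{thm2-01}, augmented to handle the varying parameter $p_i$. Suppose $\{K_i\}$ does not converge to $K_0$; by passing to a subsequence I may assume $\|h_{K_i}-h_{K_0}\|\geq\varepsilon$ for some $\varepsilon>0$. Set $\mu:=\widetilde{C}_{p_0,q}(K_0,Q,\cdot)$, so that $\widetilde{C}_{p_i,q}(K_i,Q,\cdot)=\mu$ for every $i$ and in particular the total masses equal the constant $\mu(S^{n-1})$.

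The first main step is to show that $\{K_i\}$ is uniformly bounded. I would adapt Lemma \ref{cdsb-dmvsabb} by introducing $f_i(u)=\int_{S^{n-1}}(u\cdot v)_+^{p_i}d\mu(v)$. Dominated convergence yields $f_i\to f_0$ pointwise, and since each $f_i^{1/p_i}$ is a support function by Minkowski's integral inequality (using $p_i\geq 1$ eventually) with sup-norm bounded by $\max\{1,\mu(S^{n-1})\}$, the pointwise convergence upgrades to uniform convergence on $S^{n-1}$. Because $\mu$ is not concentrated in any closed hemisphere, $f_0$ has a strictly positive lower bound, hence there exists $m>0$ with $f_i(u)\geq m^{-p_i}$ for all large $i$ and all $u\in S^{n-1}$. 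Inserting this into the chain of inequalities from the proof of Lemma \ref{cdsb-dmvsabb}, applied to $K_i':=\widetilde{V}_q(K_i,Q)^{-1/p_i}K_i$, one obtains $K_i'\subseteq mB$, and then $\widetilde{V}_q(K_i,Q)\leq (m^q\widetilde{V}_q(B,Q))^{p_i/(p_i-q)}$; since $p_i/(p_i-q)$ remains bounded as $p_i\to p_0>q$, the sequence $\{K_i\}$ is uniformly bounded.

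Blaschke's selection theorem then extracts a subsequence (still denoted $\{K_i\}$) with $K_i\to K_0'$ for some compact convex $K_0'\neq K_0$, and I must show $K_0'\in\mathcal{K}_o^n$. I would imitate Lemma \ref{Kinteriorp}: if $o\in\partial K_0'$, Lemma \ref{oboundary} produces an open cap $\omega_{\delta_0}(u_0)$ on which $\rho_{K_i}\to 0$ uniformly, and the lower bound
\begin{align*}
\mu(S^{n-1})=\widetilde{C}_{p_i,q}(K_i,Q,S^{n-1})\geq\frac{\min\{r_Q^{n-q},R_Q^{n-q}\}}{n}\int_{\omega_{\delta_0}(u_0)}\rho_{K_i}^{q-p_i}(u)\,du,
\end{align*}
valid because $(u\cdot\alpha_{K_i}(u))^{-p_i}\geq 1$, blows up since $q-p_i\to q-p_0<0$ and $\rho_{K_i}\to 0$ on $\omega_{\delta_0}(u_0)$, a contradiction.

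Finally, I would verify the weak convergence $\widetilde{C}_{p_i,q}(K_i,Q,\cdot)\to\widetilde{C}_{p_0,q}(K_0',Q,\cdot)$ with varying exponent: for any continuous $g$ on $S^{n-1}$,
\begin{align*}
\int_{S^{n-1}} g\,d\widetilde{C}_{p_i,q}(K_i,Q,\cdot)=\int_{S^{n-1}} h_{K_i}^{-p_i}g\,d\widetilde{C}_q(K_i,Q,\cdot),
\end{align*}
and because $h_{K_i}\to h_{K_0'}$ uniformly with strictly positive limit and $p_i\to p_0$, the integrand converges uniformly to $h_{K_0'}^{-p_0}g$; combined with Lemma \ref{dualcmwc1} at $p=0$, this gives the claimed weak limit. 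Matching against $\mu$ yields $\widetilde{C}_{p_0,q}(K_0',Q,\cdot)=\widetilde{C}_{p_0,q}(K_0,Q,\cdot)$, and Theorem \ref{uniqueness} forces $K_0'=K_0$, contradicting $\|h_{K_i}-h_{K_0}\|\geq\varepsilon$. The main obstacle is the uniform boundedness step: the dependence of the scaling $\widetilde{V}_q(K_i,Q)^{-1/p_i}$ and of the exponent $p_i/(p_i-q)$ on $i$ must be tracked carefully so that all constants obtained remain uniform as $p_i\to p_0$.
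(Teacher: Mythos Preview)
Your proposal is correct and follows precisely the route the paper indicates: the paper gives no separate proof of Theorem \ref{thm2-3} but simply states that it follows ``by the arguments as in the proof of Theorem \ref{thm2-01}'', and what you have written is exactly a careful execution of that argument with the obvious bookkeeping for the varying exponent $p_i$. The tracking of the $p_i$-dependence in Lemmas \ref{uniformlyc}, \ref{cdsb-dmvsabb}, \ref{Kinteriorp} and in the weak-convergence step (via $h_{K_i}^{-p_i}\to h_{K_0'}^{-p_0}$ uniformly, combined with Lemma \ref{dualcmwc1} at $p=0$) is handled correctly, and the constants remain uniform because $p_i\to p_0\in(q,\infty)$ with $p_0>1$.
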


\vskip 0.3 cm


\end{document}